\documentclass{article}

\usepackage[english]{babel}

\usepackage{amssymb} 
\usepackage{graphicx} 
\usepackage{amsthm} 
\usepackage{color}
\usepackage{amsmath}
\usepackage{mathrsfs} 
\usepackage{mathtools} 
\usepackage{dsfont}
\usepackage[parfill]{parskip}
\usepackage[a4paper, total={6in, 8in}]{geometry}
\usepackage{bbm}
\usepackage{hyperref}
\usepackage{cleveref} 
\usepackage{enumerate}
\usepackage[normalem]{ulem} 
\usepackage{todonotes}

\usepackage{caption}
\usepackage[labelformat=simple]{subcaption}

\usepackage{tikz}
\usetikzlibrary{decorations.pathmorphing,decorations.pathreplacing,calc,arrows.meta}

\tikzset{
  zline/.style   ={-{Stealth[length=2mm]}, gray!55},
  vtx/.style     ={circle, fill=black, inner sep=1.5pt},
  ivtx/.style    ={circle, fill=black!70, inner sep=1.1pt},
  outerarch/.style={very thick, blue!65!black},
  innerarch/.style={very thick, teal!75!black},
  forced/.style  ={very thick, red!75!black},
  excluded/.style={very thick, red!70!black, dashed},
  escape/.style  ={very thick, orange!85!black},
  reach/.style   ={thick, violet!70!black},
  cut/.style     ={gray!65, dashed},
  pathconn/.style={thick, green!45!black, decorate,
                   decoration={snake, amplitude=.5mm, segment length=2.2mm}},
  brc/.style     ={decorate, decoration={brace, amplitude=4pt, mirror}},
}

\usepackage{setspace}

\usepackage[
backend=biber,
style=numeric,
maxbibnames=99
]{biblatex}
\newcommand{\eps}{\varepsilon}

\newcommand{\R}{\mathbb R}

\newcommand{\E}{\mathbb E}
\renewcommand{\P}{\mathbb P}

\newcommand{\cL}{\mathcal L}
\newcommand{\cR}{\mathcal R}

\newcommand{\cG}{\mathcal{G}}

\renewcommand{\b}{\beta}

\renewcommand{\phi}{\varphi}

\newcommand{\euler}{{\mathrm e}}

\newcommand{\Z}{\mathbb Z}
\newcommand{\N}{\mathbb{N}}
\renewcommand{\P}{\mathbb P}

\newcommand{\X}{\mathcal X}

\newcommand{\de}{\mathrm{d}} 

\newcommand{\ov}{\overline}

\newcommand{\0}{\mathbf{0}}
\newcommand{\x}{\mathbf{x}}
\newcommand{\y}{\mathbf{y}}

\newcommand{\abs}[1]{\left| #1 \right|}

\newcommand{\set}[1]{\left\{ #1 \right\}}

\newcommand{\1}{\mathbf{1}}
\newcommand{\st}{\star}

\let\oldsqrt\sqrt
\def\sqrt{\mathpalette\DHLhksqrt} \def\DHLhksqrt#1#2{%
\setbox0=\hbox{$#1\oldsqrt{#2}$}\dimen0=\ht0
\advance\dimen0-0.2\ht0
\setbox2=\hbox{\vrule height\ht0 depth -\dimen0}%
{\box0\lower0.4pt\box2}}

\theoremstyle{plain} 
\newtheorem{theorem}{Theorem}
\newtheorem{prop}{Proposition}[section]
\newtheorem{lemma}{Lemma}[section]
\newtheorem{corollary}{Corollary}[section]

\Crefname{theorem}{Theorem}{Theorems}
\Crefname{prop}{Proposition}{Propositions}
\Crefname{corollary}{Corollary}{Corollaries}
\crefname{theorem}{Theorem}{Theorems}
\crefname{prop}{Proposition}{Propositions}
\crefname{corollary}{Corollary}{Corollaries}
\crefname{lemma}{Lemma}{Lemmas}
\crefname{mydef}{Definition}{Definitions}
\crefname{remark}{Remark}{Remarks}

\theoremstyle{definition} 
\newtheorem{mydef}{Definition}[section]

\Crefname{mydef}{Definition}{Definitions}

\theoremstyle{remark} 
\newtheorem{remark}{Remark}[section]

\Crefname{remark}{Remark}{Remarks}

\title{Rainbow percolation}
\author{Peter Gracar\thanks{School of Mathematics, University of Leeds,
United Kingdom. ORCID:
\href{https://orcid.org/0000-0001-8340-8340}{0000-0001-8340-8340}.}
\and Benjamin Lees\thanks{School of Mathematics, University of Leeds,
United Kingdom. ORCID:
\href{https://orcid.org/0000-0003-2657-5358}{0000-0003-2657-5358}}}
\date{August 19, 2026}

\begin{document}
\maketitle

\begin{abstract}
We consider the weight-dependent random connection model on a Poisson point
process of intensity $\lambda$ on $\R\times(0,1)$ in which the vertices
$(x,t)$ and $(y,s)$ are joined precisely when $(t\vee s)|x-y|\le\beta$.
Points at distance $d$ are joined with probability $\min(1,\beta/d)^2$, the
critical decay of one-dimensional long-range percolation, and edges sharing
a vertex are dependent through the common mark. We prove that the model has
a genuine phase transition: for $\lambda\beta<1$ almost surely all
connected components are finite, while for $\lambda\beta\ge31$ an infinite
component exists, so at intensity one the critical value satisfies
$\beta_c\in[1,31]$; a numerical study included as an appendix places it
near $2$.  By kernel and profile comparisons the supercritical bound
extends to the age-dependent random connection model on the line, which
with indicator profile has a non-degenerate phase transition at every
value of its parameter, closing a case of the one-dimensional phase
diagram left open in earlier work. The lower bound is proved by
disconnecting nested pairs of long edges (``rainbows'') with cut-point
certificates, an argument developed first in a discrete skeleton of the
model with the vertices pinned to $\Z$. The skeleton is of independent
interest: it has no supercritical phase at all, jumping from total
fragmentation to trivial connectivity even though almost surely
infinitely many edges cross every fixed site. The supercritical argument
is a Peierls argument on the binary tiling of the hyperbolic
half-plane. 
\end{abstract}

\section{Introduction}\label{sec:intro}

We study the weight-dependent random connection model
\cite{Bringmann2019,GracarGrauerLuechtrathMoerters2019,
GracarHeydenreichMoenchMoerters2022,Jorritsma2023a} on the real line at the critical decay
of its weak kernel. The vertex set is a Poisson point process of intensity
$\lambda$ on $\R\times(0,1)$, and, taking the indicator profile, two
vertices $(x,t)$ and $(y,s)$ are joined by an edge precisely when
\[
   (t\vee s)\,|x-y|\le\beta,
\]
where $\beta>0$ is a fixed parameter. A vertex $(x,t)$ thus behaves like a
ball of radius $\beta/t$ centred at $x$; since $\beta/U$ with $U$ uniform
on $(0,1)$ is exactly Pareto with scale $\beta$, the model describes
heavy-tailed balls on the line, joined by the symmetric rule that each
centre must lie within the reach of the other. Averaging over the marks,
two points at distance $d>\beta$ are joined with probability $\beta^2/d^2$.
This $\min$-rule with Pareto radii is the connection rule introduced by Yukich
\cite{Yukich2006} on the integer lattice; the model studied here is its
continuum counterpart on the line, at the boundary of the parameter range he
considers.

Two features distinguish this graph from long-range percolation with independent edges, and together place it outside of where classical one-dimensional theory applies. First, the connection probability
decays like the inverse square of the distance, which is the critical decay
of one-dimensional long-range percolation: for independent edges with this
decay the existence of an infinite component depends on the multiplicative
constant and not merely on the exponent
\cite{Schulman1983,NewmanSchulman1986,AizenmanNewman1986,
DuminilCopinGarbanTassion2024}. Because the
radii here have tail exponent exactly $1$, varying $\beta$ or $\lambda$
moves only the constant and never the exponent, so the model cannot be
compared to an off-critical regime. In particular, no stochastic domination
argument decides percolation : any change of the tail exponent
produces a model whose threshold is degenerate (\Cref{prop:gamma}) . Second, edges sharing a vertex are strongly
dependent through the common mark, so the graph is \emph{not} a long-range
percolation model with independent edges, and the classical results do not
apply directly.

Our main results show that the model nevertheless undergoes a genuine phase
transition. For $\lambda\beta<1$ almost surely every connected component is
finite (\Cref{thm:continuum}), while for $\lambda\beta\ge31$ an infinite
component exists almost surely (\Cref{thm:super}); at intensity one the
critical value therefore satisfies $\beta_c\in[1,31]$. A numerical study,
reported in \Cref{sec:numerics}, places the critical value near
$\beta_c\approx2$.  The critical exponent is the only one with this
behaviour: under the rule $(t\vee s)^{\gamma}\abs{x-y}\le\beta$ the radii
have tail exponent $1/\gamma$, and \Cref{prop:gamma} shows that
$\beta_c=\infty$ for every $\gamma<1$ and $\beta_c=0$ for every
$\gamma>1$. Our third main result is obtained from the comparison in its
proof: the age-dependent random connection model on the line with
indicator profile has a non-degenerate phase transition at every value
of its parameter (\Cref{thm:adrcm}). 

The lower bound rests on the analysis of a simplified model that we believe
to be of independent interest: the discrete skeleton of the continuum
model, obtained by pinning the vertices to $\Z$. Every vertex $i\in\Z$
carries an independent radius $R_i$ with the Pareto law
\[
   \P(R_i>x)=\frac{x_m}{x},\qquad x\ge x_m,
\]
where $x_m\in(0,1)$ plays the role of $\beta$, and two distinct vertices
$i,j$ are joined by an edge precisely when each lies within the reach of
the other,
\[
   \{i,j\}\in E \quad\Longleftrightarrow\quad |i-j|\le R_i\wedge R_j
\]
(the discretisation is set out in \Cref{sec:setup} and
\Cref{sec:simplified}). A fixed pair at distance $d$ is joined with
probability $x_m^2/d^2$, so the skeleton retains both features of
the continuum model, the critical decay and the dependence through common
radii, while stripping away the spatial randomness. Our main result for
this lattice model is that, in contrast to the continuum, it is
totally fragmented for \emph{every} admissible parameter: for $x_m<1$
almost surely every connected component is finite (\Cref{thm:main}). For
$x_m\ge1$ every radius is at least $1$, so all nearest-neighbour edges are
present and the graph is connected; within this family the transition
therefore occurs at $x_m=1$, but it is a degenerate one. The model
jumps from total fragmentation to trivial connectivity, with no supercritical
percolation phase in between. The
absence of an infinite component is not due to a shortage of long edges, as
almost surely infinitely many edges cross any fixed site
(\Cref{prop:edges-io}, \Cref{rem:edges-io-Z}), so the graph contains finite
components of arbitrarily large euclidean diameter (\Cref{cor:diameter}). The source of the
divergence between the two models is that pinning the vertices to $\Z$ caps
the number of points per connection range at every scale, whereas in the
continuum the parameter $\lambda\beta$ pushes that density arbitrarily
high, which we make precise in \Cref{rem:superlit}.

\begin{remark}[Comparison with classical long-range percolation]\label{rem:lrp}
For independent bonds on $\Z$ with $p_d\sim\beta_{\mathrm{eff}}/d^2$, where
$\beta_{\mathrm{eff}}:=\lim_{d\to\infty}d^2p_d$, there is no infinite
component if $\beta_{\mathrm{eff}}\le1$ and $p_1<1$
\cite{AizenmanNewman1986}, and there is one if $\beta_{\mathrm{eff}}>1$ and
$p_1$ is close enough to $1$ \cite{NewmanSchulman1986}. The constant
$\beta_{\mathrm{eff}}$ is the coefficient of the logarithmic divergence in
the expected number of edges crossing a fixed point: since $n$ pairs lie at
distance $n$,
$\sum_{i\le0<j}p_{j-i}=\beta_{\mathrm{eff}}\sum_{n\ge1}n^{-1}$.

\begin{enumerate}[(i)]
	\item In the simplified model of \Cref{sec:simplified},
$\P(\{i,j\}\in E)=x_m^2/|i-j|^2$ for every $|i-j|\ge1$, so
$\beta_{\mathrm{eff}}=p_1=x_m^2$. Both are below $1$ for every admissible
$x_m$, so for independent edges \Cref{thm:main} would follow from
\cite{AizenmanNewman1986}. What has to be ruled out is that the dependence
through the shared radii changes this. What makes this question unclear, even intuitively, is that it acts in the direction that
would favour percolation: $\{i,j\}\in E$ is increasing in $(R_i,R_j)$, so
the edges are positively associated.

\item The degeneracy of the transition on $\Z$ can be read off the same two
quantities. Classically $\beta_{\mathrm{eff}}$ and $p_1$ are separate
parameters and percolation needs both to be large; here they are equal, and
raising $x_m$ to the point where $\beta_{\mathrm{eff}}>1$ also makes every
radius at least $1$, hence every nearest-neighbour bond present. The window
$\beta_{\mathrm{eff}}\in(1,\infty)$ has no counterpart in this family,
which is the obstruction of \Cref{rem:superlit} seen from the side of the
connection probabilities rather than the density.

\item In the continuum, for intensity $\lambda$ and connection probability
$\beta^2/d^2$,
\[
   \lambda^2\int_{-\infty}^{0}\!\int_{0}^{\infty}
   \frac{\beta^2}{(y-x)^2}\,\de y\,\de x
   =\lambda^2\beta^2\int_0^\infty\frac{\de u}{u},
\]
so $\beta_{\mathrm{eff}}=\lambda^2\beta^2$. \Cref{thm:continuum} then reads
$\beta_{\mathrm{eff}}<1$ and \Cref{thm:super} reads
$\beta_{\mathrm{eff}}\ge961$: the lower bound agrees with the classical
constant, now for dependent edges. The numerical study of
\Cref{sec:numerics} places the critical value near
$\beta_{\mathrm{eff}}\approx4$, so the agreement concerns the bound and
not the critical value.

\item The subcritical mechanism is the classical one, read in the model
restricted to $[0,n]$: on the full line every position is crossed
(\Cref{rem:edges-io-Z}), for the independent model by the same computation. For independent bonds
the positions of $[0,n]$ crossed by no edge of the
restriction number $\asymp n^{1-\beta_{\mathrm{eff}}}$; our certificates
give $\asymp n^{1-x_m}$, against $n^{1-x_m^2}$ for the independent-pairs
model with the same marginals; since $x_m>x_m^2$ on $(0,1)$ the dependence
costs an exponent, but both are positive for $x_m<1$
(\Cref{rem:cutpoints}).

\item Two points are left open. The classical statement includes
$\beta_{\mathrm{eff}}=1$, whereas \Cref{thm:continuum} requires
$\lambda\beta<1$, so the case $\lambda\beta=1$ is undecided. At the
critical exponent the percolation density jumps at the threshold
\cite{AizenmanNewman1986,AizenmanChayesChayesNewman1988,
DuminilCopinGarbanTassion2024}; we do not know whether it does so at
$\beta_c$ here.
The simulations of \Cref{sec:numerics} point to a jump, of height close
to $0.9$.
\end{enumerate}
\end{remark}

For independent edges, joined at distance $d$ with probability decaying as $d^{-s}$, much more is known. At $s=2$ on $\Z$, Imbrie and Newman
\cite{ImbrieNewman1988} exhibited an intermediate phase in which the truncated
two-point function decays with a continuously varying power, and Aizenman,
Chayes, Chayes and Newman \cite{AizenmanChayesChayesNewman1988} proved the
matching discontinuity of the magnetisation for the associated Ising and Potts
models. For $s\in(1,2)$ the chemical distance between $x$ and $y$ grows like
$(\log\abs{x-y})^{\Delta+o(1)}$ with $\Delta=\log2/\log(2/s)$
\cite{Biskup2004}, as does the diameter of a box \cite{Biskup2011}, and Biskup
and Lin \cite{BiskupLin2019} removed the $o(1)$. Over the same range Berger
\cite{Berger2002} proved that there is no infinite cluster at criticality, and
that the walk on the infinite cluster is transient for $s\in(1,2)$ and
recurrent for $s\ge2$. The distances at the critical decay $s=2$ were settled
last, and they are polynomial rather than polylogarithmic, of order
$\abs{x-y}^{\theta}$ for an exponent $\theta\in(0,1)$ that again depends on the
multiplicative constant \cite{DingSly2013}. All of this has a counterpart in
higher dimensions, with $1$ and $2$ replaced by the dimension and twice the
dimension; see \cite{Baeumler2023} for the critical decay and
\cite{BaeumlerBerger2024} for lower bounds on the critical exponents.

The model studied here belongs instead to the inhomogeneous family, in which
the connection probability is modulated by vertex weights and edges through a
common vertex are therefore dependent; see Deijfen, van der Hofstad and
Hooghiemstra \cite{DeijfenHofstadHooghiemstra2013} for scale-free percolation
and \cite{GracarGrauerLuechtrathMoerters2019,GracarLuechtrathMoerters2021} for
the weight-dependent random connection model. The $\min$-rule is due to Yukich
\cite{Yukich2006}, who placed radii $\delta U_z^{-p}$ at the sites of the
integer lattice, with the $U_z$ independent and uniform on $[0,1]$, and joined
two sites when each lies inside the other's radius. Those radii are Pareto with
shape $1/p$ and scale $\delta$, so the simplified model of
\Cref{sec:simplified} is his $G_{p,\delta}$ on $\Z$ with $p=1$ and
$\delta=x_m$. On the line he requires $p>1$, where the graph is scale-free and
ultra-small, with graph distances of order $\log\log\abs{x-y}$; shape $1$ is
the excluded endpoint $p=1$, and \Cref{thm:main} says that there the graph has
no infinite component at all once $\delta<1$. The continuum model of
\Cref{sec:setup} is the Poisson version on $\R$ anticipated in his Remark~2,
and \Cref{thm:super} shows that it does have an infinite component when
$\lambda\beta$ is large.  \Cref{prop:gamma} covers his remaining
regimes: in the continuum the threshold is zero for $p>1$ and infinite
for $p<1$. 

The subcritical proof runs as follows. A \emph{rainbow} is a nested pair of long edges
$\{a,b\}$, $\{c,d\}$ with $a<c<d<b$ such that neither $\{a,c\}$ nor
$\{d,b\}$ is an edge and the two overhangs $\ell=c-a$, $r=b-d$ differ by at
most the inner span $m=d-c$; rainbows straddle the origin at infinitely
many independent scales, with per-scale probability bounded below
(\Cref{prop:rainbows-io}). Whether the two arches of a
rainbow communicate is decided by the radii inside the outer span
(\Cref{lem:window}), and on a containment event of uniformly positive
probability (\Cref{lem:contain}) the question reduces to two one-sided
\emph{gap} problems, one in each overhang $(a,c)$ and $(d,b)$. Gaps are
produced by cut-point certificates in the spirit of Newman and Schulman
\cite{NewmanSchulman1986}: intersections of caps on distinct independent
radii that force all edges near a given position to be short. Because such a
certificate is a product over vertices, all probabilities involved are exact
products, and a second-moment argument yields a gap with probability bounded
below uniformly in the scale (\Cref{prop:cutpoints}, \Cref{lem:cutmarked}).
A confinement lemma (\Cref{lem:confine}) then upgrades disconnection of the
arches: between the two cuts of a rainbow, the component of
\emph{every} vertex is trapped. Finally, the certificates at different
scales are sufficiently close to independent for the Kochen--Stone lemma
\cite{KochenStone1964} and a Kolmogorov zero--one law to apply
(\Cref{prop:scale}, \Cref{prop:scalesio}), so almost surely every vertex is
confined at infinitely many scales, which gives \Cref{thm:main}.

The supercritical side is proved by a renormalisation of a different kind.
The mark space $(0,1)$ is split into dyadic bands, whose restrictions are
independent Poisson processes and, by an exact scale invariance, dilated
copies of one another: each band is a dense but subcritical Gilbert-type
graph whose connection range is comparable to its scale. By the
$\min$-rule, the connection threshold between a band-$k$ point and any
point of larger radius exceeds $\beta2^k$, so a chain of band-$k$ points
with gaps below $\beta2^k$ absorbs, deterministically, every point of
every higher band inside its span. The only randomness left is whether
dyadic windows are locally dense, and these events are independent with
failure probability at most $16e^{-\lambda\beta/4}$, uniformly over scales
and locations. The incidence structure of the dyadic windows across all
scales is the binary tiling of the hyperbolic half-plane, truncated at a
floor, and a Peierls contour argument for independent site percolation on
this tiling, resting on a boundary-connectivity lemma of Tim\'ar
\cite{Timar2013}, produces an infinite cluster (\Cref{sec:super}).

The paper is organised as follows. \Cref{sec:setup} introduces the
continuum model,  proves the degeneracy of the threshold at every
other tail exponent (\Cref{prop:gamma}) and the phase transition of the
age-dependent random connection model (\Cref{thm:adrcm}),  shows that
every point of the line is crossed by infinitely many edges and that
interleaving edges merge, and defines rainbows (\Cref{sec:rainbows}). \Cref{sec:simplified} introduces the simplified model
and carries out the programme above, in
\Cref{sec:formation,sec:reduction,sec:cutpoints,sec:main}; \Cref{sec:continuum}
then transfers the main theorem to the continuum model. \Cref{sec:super} proves the supercritical
bound \Cref{thm:super}; it can be read independently of
\Cref{sec:simplified}. \Cref{sec:numerics} reports a numerical study
locating the critical value between the two bounds.

\section*{Notation}

In this section we present a glossary of some notation we use throughout the paper.

\begin{itemize}
    \item $R_v$: radius associated to $v\in \Z$. $R_v$ is Pareto distributed with shape 1 and scale $x_m\in(0,1)$, as defined in \Cref{sec:simplified}; its continuum counterpart $R_x$, Pareto with scale $\beta$, is defined in \Cref{sec:rainbows}
    \item $R^\sharp_u$: the modified radii of the marked model given in \Cref{def:marked}
    \item $E$: $\{\{i,j\}\,:\, |i-j|\leq R_i\wedge R_j\}$, the edge set, as defined in \Cref{sec:simplified}; the continuum connection rule is given in \Cref{sec:setup} and recast through radii in \Cref{sec:rainbows}
    \item Rainbow: a pair of edges $\{a,b\},\{c,d\}\in E$ with $a<c<d<b$ such that $\{a,c\},\{d,b\}\notin E$, and such that its overhangs are balanced, as defined in \Cref{def:rainbow}
    \item $\ell,m,r\in \Z$: $c-a, d-c, b-d$, the left, middle, and right lengths of a rainbow, respectively, as defined in \Cref{def:rainbow}
    \item $\{\text{gap at }i\}$: $\{\not\exists j\leq i\text{ s.t. }\exists k>i\text{ with } |k-j|\leq R_j\wedge R_k\}$, the event \eqref{eq:gap}, defined after \Cref{thm:main}
    \item $\{\text{gap}^\sharp\text{ at }i\}$: $\{\not\exists 0\leq j\leq i<k\leq n+m\text{ with } |k-j|\leq R^\sharp_j\wedge R^\sharp_k\}$, the gap event corresponding to the marked model, defined in \Cref{def:marked}
    \item $B$: containment event $\{R_v<\min(v-a,b-v)\text{ for all }v\in(c,d)\}$, defined after \Cref{lem:window}; its continuum version is defined in \Cref{lem:containcont}
    \item $\mathcal D^L$: left overhang disconnection event, defined after \Cref{lem:window}, $$\{a\not\sim[c,d]\ \text{by a path whose intermediate vertices lie in }(a,c)\}$$
    \item $\mathcal D^R$: right overhang disconnection event, defined after \Cref{lem:window}, $$\{[c,d]\not\sim b\ \text{by a path whose intermediate vertices lie in }(d,b)\}$$
    \item $\mathcal N$: no-cross-edge event $\{\{u,w\}\notin E\ \text{for all }u\in(a,c),\ w\in(d,b)\}$, defined after \Cref{lem:window}
    \item $\mathcal C_i$: cut-point certificate event $\{R_j<i+1-j\text{ for all }0\leq j\leq i\}\cap\{R_{i+1}<1\}$, defined in \Cref{sec:cutpoints}; the continuum counterpart of $\mathcal C_i$ and $\mathcal C^\sharp_i$ is the certificate $\mathcal C^W_\tau$ of \eqref{eq:cutcont}
    \item $\mathcal C^\sharp_i$: $\{R^\sharp_j<i+1-j\ \text{for }1\le j\le i\}
   \cap\{R^\sharp_{i+1}<1\}
   \cap\{R^\sharp_k<k\ \text{for }i+2\le k\le n-1\}$, the cut-point certificate event in the marked model, defined in \Cref{lem:cutmarked}
\end{itemize}

\section{Setup}\label{sec:setup}

Let $\mathcal{X}$ be a Poisson point process of intensity $\lambda>0$ (outside the statements of  \Cref{thm:continuum,thm:super,prop:gamma,thm:adrcm}  we take $\lambda=1$, which by the scaling relation recalled in \Cref{sec:continuum} loses no generality) on $\mathbb{R}\times(0,1)$.
Consider the weight-dependent random connection graph on $\mathcal{X}$ with the weak kernel  $g_{\text{weak}}(t,s)=t\vee s$  and profile function $\rho(x)=\1_{[0,1]}(x)$. More precisely, a vertex $(x,t)\in\mathbb{R}\times(0,1)$ is connected via an edge to a vertex $(y,s)\in\mathbb{R}\times(0,1)$ with probability
\[
   \rho\big(\beta^{-1} g_{\text{weak}}(t,s)|x-y|\big)=\1_{[0,\beta]}((t\vee s)|x-y|).
\]
 Outside of \Cref{prop:gamma,thm:adrcm}, where other kernels replace
$g_{\text{weak}}$ in this formula, the weak kernel is the only kernel we
work with; similarly, the profile is the indicator throughout the paper unless specified otherwise. 
Note that the connection probability is monotonically decreasing in $|x-y|$, that is, vertices have a higher probability of being connected when they are close. Similarly, the connection probability is monotone decreasing in the maximum of $t$ and $s$ -- if one interprets the mark $t$ (resp. $s$) as the inverse weight of the vertex, this means the probability of an edge existing is monotonically increasing in the smaller of the two weights.

Write $\P_\beta$ for the law of the resulting graph, and let
\[
    \beta_c=\inf\{\beta:\P_\beta(\text{there exists an infinite connected component})>0\}.
\]
We are interested in determining whether $\beta_c=0$, $\beta_c=\infty$ or
$\beta_c\in(0,\infty)$; our first two main results show that the third
alternative holds, locating $\beta_c\in[1,31]$ at intensity $1$ ,
and by \Cref{prop:gamma} each of the two degenerate alternatives occurs
at every other tail exponent .

Our first main result bounds $\beta_c$ from below. Its proof runs through the
simplified model of \Cref{sec:simplified} and is completed in
\Cref{sec:continuum}.

\begin{theorem}\label{thm:continuum}
If $\lambda\beta<1$, then almost surely every connected component of the
weight-dependent random connection model  with weak kernel
$g_{\text{weak}}$ and profile function $\rho=\1_{[0,1]}$  is finite. In
particular, at
intensity $\lambda=1$ the model has no infinite component for any
$\beta<1$, i.e.\ $\beta_c\ge1$.
\end{theorem}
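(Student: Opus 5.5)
We reduce to intensity one by scaling and then run, in the continuum, the rainbow argument sketched in the introduction for the lattice skeleton. Dilating space by $\lambda$ maps a Poisson process of intensity $\lambda$ to one of intensity $1$ and the rule $(t\vee s)|x-y|\le\beta$ to $(t\vee s)|x-y|\le\lambda\beta$. The law of the graph at $(\lambda,\beta)$ is therefore that at $(1,\lambda\beta)$, and it suffices to treat $\lambda=1$, $\beta<1$.

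Recast the model through radii $R_x=\beta/t$, which are Pareto with shape $1$ and scale $\beta$. Then $(x,t)\sim(y,s)$ if and only if $|x-y|\le R_x\wedge R_y$, which is exactly the skeleton rule with the lattice $\Z$ replaced by the Poisson points and $x_m$ replaced by $\beta$. The plan is to transfer, step by step, the ingredients listed in the introduction.

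\begin{enumerate}[(i)]
\item \emph{Rainbows.} Show that rainbows, i.e.\ nested balanced pairs of long edges with absent edges $\{a,c\}$ and $\{d,b\}$, straddle the origin at infinitely many dyadic scales. The per-scale probability should be bounded below, using the Poisson analogue of \Cref{prop:rainbows-io}. Here we take the mean number of edges of length in $[2^k,2^{k+1}]$ crossing $0$, which is of order $\beta^2\log 2$ per scale, and use scale invariance of the Poisson process together with the Pareto law.
\item \emph{Reduction to gaps.} Prove the continuum window and containment lemmas (the continuum version of $B$ mentioned in the glossary). These reduce disconnection of the two arches to the two one-sided gap events $\mathcal D^L$ and $\mathcal D^R$ in the overhangs, together with the no-cross-edge event $\mathcal N$.
\item \emph{Confinement.} Apply the confinement lemma, so that a disconnected rainbow traps every vertex between its cuts.
\item \emph{Infinitely many scales.} Use Kochen--Stone and a zero--one law across scales, as in \Cref{prop:scale,prop:scalesio}, to confine every vertex at infinitely many scales. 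Since the Poisson process is ergodic under shifts, a zero--one argument applies to the event of an infinite component.
\end{enumerate}

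The main obstacle is the gap step: producing, with probability bounded below uniformly in the scale, a position $\tau$ in an overhang of length $n$ crossed by no edge of the restricted graph. I would use the continuum certificate $\mathcal C^W_\tau$, which asks every point at $x<\tau$ to have $R_x<\tau-x$ up to a window, and the points just right of $\tau$ to be short. Its probability is a Poisson product, namely the exponential of minus the integral of $\P(R>\tau-x)\,\de x=\beta/(\tau-x)$ over points at distance at least about $\beta$. This gives roughly $n^{-\beta}$ per unit position. Summed over $\tau\in[0,n]$, the expected number of certified cut positions is of order $n^{1-\beta}$, which diverges precisely because $\beta<1$ (equivalently $\lambda\beta<1$). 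This is where the hypothesis enters, mirroring the $n^{1-x_m}$ count of \Cref{rem:cutpoints}.

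A second-moment bound then gives a gap with probability bounded below. The pair correlations of certificates at $\tau<\tau'$ factorize up to a constant times $(\tau'-\tau)^{-\beta}$-type terms, because certificates are products over independent Poisson contributions. The Poisson clustering near $\tau$ (a point within distance $\beta$ of $\tau$ always connects) must be handled by requiring a small empty interval around $\tau$, which costs only a constant factor.

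A secondary difficulty is that the marked conditioning on the rainbow's endpoints must be replaced by Palm measures. By Mecke's formula, conditioning on points at $a,b,c,d$ leaves an independent Poisson process elsewhere, so the exact product structure survives.
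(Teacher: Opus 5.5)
The overall architecture matches the paper: scaling, the radius picture, block rainbows, containment $B$, the no-cross-edge event $\mathcal N$, cut certificates with Paley--Zygmund, \Cref{lem:confine}, and Kochen--Stone. However, the step you call the main obstacle has a genuine gap. The certificate you describe is the \emph{unmarked} one of \Cref{prop:cutpoints}, and it does not certify a cut inside a rainbow.

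The leftmost point of the overhang $(a,c)$ is the outer endpoint $a$, with $R_a\ge b-a$. It cannot be given the cap $R_a<\tau-a$, since that fails with certainty. It is also joined to every $w>\tau$ with $R_w\ge w-a$. The number of such points $w\in(\tau,c)$ is Poisson with mean $\beta\log(\ell/(\tau-a))$, which is bounded below for $\tau$ in the bulk of the overhang. Independently of your certificate, then, with probability bounded away from zero an edge from $a$ jumps your cut, and neither $G^L(\tau)$ nor $\mathcal D^L$ follows.

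The certificate needs a second group of caps: no point $w\in(\tau,c)$ with $R_w\ge w-a$. This is the second group in $\mathcal C^W_\tau$ of \eqref{eq:cutcont}, and the third cap group of $\mathcal C^\sharp_i$. Edges from $a$ to $c$, to $(c,d)$ and to $d$ are excluded separately, by $R_c<\ell$, by $B$, and by balance $R_d<r\le\ell+m$. This is exactly what \Cref{prop:scalecont}(i) checks to obtain $G^L(\tau)$. Note that \Cref{lem:confine} needs $G^L(\tau)$ and $G^R(\tau')$ at explicit cut positions, not the disconnection events $\mathcal D^L,\mathcal D^R$ of your step (ii).

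The extra caps are not local to $\tau$, so your moment estimates change. The one-point probability becomes $\asymp\ell^{-\beta}$ uniformly in $\tau$, rather than $\asymp(\tau-a)^{-\beta}$. For $\tau<\tau'$ the new void regions overlap on $(\tau,\tau']$, so the pair bound $\lesssim(\tau'-\tau)^{-\beta}$ needs an argument. The paper keeps $\tau$ in the middle third, where the extra cost is at most $\beta\log3$ and the overlap at most $\beta\log2$, because $\tau'-a\le2(\tau-a)$. The count is still of order $\ell^{1-\beta}$, so $\beta<1$ enters where you say.

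A second point is left open. Rainbow formation is increasing in the configuration, while $B$, $\mathcal N$ and the certificates are decreasing, so their probabilities cannot simply be multiplied. The paper requires quiet blocks: exactly one qualifier per block, and every other block point of radius below $g$. It conditions on the block configuration and applies \Cref{lem:fkg} to the remaining events, which are now decreasing events of the independent non-block process. Your Mecke/Palm remark gives Harris--FKG for a fixed quadruple, but it does not finish the job. You still need uniqueness of the quadruple, or a second-moment bound on the number of good quadruples, to turn an expected count into a probability. Other large-radius block points in $(a,\tau]$ must also be excluded, either by the certificate or by quietness.

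Your final step, by contrast, is a valid shortcut. Kochen--Stone already gives $\P(D_k\text{ i.o.})>0$, hence positive probability of no infinite component. Shift-ergodicity then gives probability one, without the paper's Kolmogorov zero--one argument for $\{D_k\text{ i.o.}\}$.
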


The complementary upper bound is proved in \Cref{sec:super}, by a
renormalisation that decomposes the mark space into independent dyadic
bands; its proof does not use the simplified model.

\begin{theorem}\label{thm:super}
If $\lambda\beta\ge31$, then almost surely the weight-dependent random
connection model  with weak kernel $g_{\text{weak}}$ and profile
function $\rho=\1_{[0,1]}$  contains an infinite connected component. In
particular,
at intensity $\lambda=1$,
\[
   1\ \le\ \beta_c\ \le\ 31 .
\]
\end{theorem}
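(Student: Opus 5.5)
The proof follows the renormalisation sketched in the introduction. I would first split the mark space $(0,1)$ into dyadic bands $I_k=(2^{-k-1},2^{-k}]$, $k\ge0$. The restrictions $\X_k$ of $\X$ to $\R\times I_k$ are independent Poisson processes of intensity $\lambda2^{-k-1}$ on $\R$. A band-$k$ point has radius $\beta/t\in[\beta2^k,\beta2^{k+1})$. Two points of bands $\ge k$ (marks $\le 2^{-k}$) at distance at most $\beta2^k$ are therefore always joined, since $(t\vee s)|x-y|\le 2^{-k}\beta2^k=\beta$.

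The key deterministic observation is the absorption property. Suppose $x_1<\dots<x_n$ are band-$k$ points with consecutive gaps at most $\beta2^k$. Then they form a path. Moreover, any point of a band $j\ge k$ lying in $[x_1,x_n]$ is within $\beta2^k$ of some $x_i$, so it is joined to the chain.

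Next I would define good windows. At scale $k$, tile $\R$ into dyadic windows $W_{k,m}=[m L2^k,(m+1)L2^k)$ with $L$ a fixed multiple of $\beta$, say $L=\beta/2$ with subwindows of length $\beta2^{k}/4$. Call $W_{k,m}$ good if every subwindow of length $\beta2^k/4$ in a suitable enlargement contains a band-$k$ point. Then the band-$k$ points of a good window, together with those of overlapping neighbours, form a chain with gaps below $\beta2^k$.

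The expected number of band-$k$ points in a subwindow is $\lambda2^{-k-1}\cdot\beta2^k/4=\lambda\beta/8$, independent of $k$. This exact scale invariance is the point of the construction. By a union bound over the $O(1)$ subwindows, the failure probability is at most $c\,e^{-\lambda\beta/c'}$, and the introduction's figure is $16e^{-\lambda\beta/4}$ after tuning the subdivision. Goodness of windows at different $(k,m)$ depends on disjoint Poisson regions, so these events are independent.

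The windows $W_{k,m}$ are the tiles of the binary tiling of the hyperbolic half-plane: $W_{k,m}$ is adjacent to its horizontal neighbours $W_{k,m\pm1}$ and to its parent $W_{k+1,\lfloor m/2\rfloor}$, which contains it. I would then verify that adjacency in this tiling, with both windows good, implies that their chains are connected in the graph.

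For horizontal neighbours, overlapping enlargements give chains that share vertices or come within $\beta2^k$ of each other. For parent and child, the band-$(k+1)$ chain spans an interval containing the child window, so by absorption it swallows every band-$k$ chain point inside its span. Hence an infinite cluster of good tiles, restricted to scales $k\ge0$ (the floor), yields an infinite connected component of the graph.

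The remaining, and main, step is percolation of independent site percolation on the binary tiling truncated at the floor $k=0$, with open probability $p\ge1-16e^{-\lambda\beta/4}$. I would run a Peierls argument. If the cluster of a fixed tile is finite, its outer boundary is a $*$-connected set of closed tiles separating it from infinity; Tim\'ar's lemma \cite{Timar2013} guarantees that this boundary is connected in the appropriate adjacency.

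In the hyperbolic tiling a separating contour around a floor tile must reach upward, and the number of $*$-connected sets of size $n$ containing a given tile is at most $D^n$ for a bounded degree $D$. The Peierls sum is then $\sum_n (\text{number of contour anchor positions})\,D^n(1-p)^n$. The delicate point is that contours touching the floor can be anchored at infinitely many places, and the truncation also breaks the clean hyperbolic isoperimetry. I would handle this by anchoring at the top tile of the contour, which lies above the origin's column within a distance linear in $n$; this gives polynomially many anchors. The sum is then below $1$ once $D(1-p)<1/2$ or so.

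Plugging in the bound on $1-p$ and the degree constant should yield a threshold of the form $16e^{-\lambda\beta/4}D\le$ const, which is where $\lambda\beta\ge31$ comes from. Finally, a positive-probability infinite component upgrades to an almost sure one by ergodicity of the Poisson process under translations, which proves \Cref{thm:super}. Together with \Cref{thm:continuum} this gives $1\le\beta_c\le31$ at $\lambda=1$.
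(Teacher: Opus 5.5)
Your architecture is the paper's: independent dyadic mark bands, deterministic linkage from the $\min$-rule, dyadic windows as sites of the floored binary tiling, a Peierls bound through Tim\'ar's lemma, and ergodicity. As written, however, the proposal does not prove the statement, which is quantitative.

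\textbf{Independence.} The independence you invoke fails for your own site events. Goodness of $W_{k,m}$ is defined through ``a suitable enlargement'', and by your description the enlargements of $W_{k,m}$ and $W_{k,m+1}$ read the same band on overlapping intervals. So the bound $\eps^s$ for $s$ closed contour sites is not available. Paying for finite-range dependence, as the paper does in \Cref{cor:adrcmclass} by thinning the contour to a separated subset, replaces $\eps$ by a root of $\eps$ and moves the threshold.

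\textbf{The constant.} The value $31$ is imported rather than derived.
\begin{itemize}
\item With your subwindows of length $\beta2^k/4$ the Poisson mean is $\lambda\beta/8$. A site therefore fails with probability at least $e^{-\lambda\beta/8}\approx0.021$ at $\lambda\beta=31$. Then $\sum_s s\,64^{s-1}\eps^s$ diverges, since $64\eps>1$; here $64^{s-1}$ is the walk count for the maximal degree $8$ of $\mathsf H^\star$.
\item The paper's $16e^{-\lambda\beta/4}$ comes from sixteen cells of length $\beta2^k/2$ in windows of length $8\beta2^k$. The condition $16e^{-\beta/4}\le2^{-7}$ holds exactly when $\beta\ge44\ln2\approx30.5$.
\item Your anchor count (``polynomially many \dots\ or so'') is unproved and more than you need. $\Gamma$ meets both infinite rays $(0,0),(1,0),(2,0),\dots$ and $(0,0),(0,-1),(0,-2),\dots$ from $\0$, and every $\mathsf H^\star$-edge changes the height by at most one. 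Hence $(k^\ast,0)\in\Gamma$ with $k^\ast\le s-1$. This gives exactly $s$ anchors, removes your worry about anchors along the floor, and yields $\sum_s s\,64^{s-1}\eps^s\le4\eps$ once $64\eps\le\tfrac12$.
\end{itemize}

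You should also specify the $\ast$-adjacency. Tim\'ar's lemma needs a generating set of the cycle space made of cycles chordal in it. This holds for $\mathsf H$ plus both diagonals of every quadrilateral face. It also needs the one-endedness of the floored tiling, so that ``the boundary visible from infinity'' is well defined.

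\textbf{A repair.} The repair is simpler than your chains, and your choice $L=\beta/2$ is what makes it possible. Windows of length $\beta2^{k-1}$ need neither cells nor enlargements: declare $(k,m)$ open iff $W_{k,m}$ contains a band-$k$ point.
\begin{itemize}
\item Two adjacent scale-$k$ windows together have length $\beta2^k$. So a band-$k$ point of an open site and a band-$k$ point of an open horizontal neighbour are at distance $<\beta2^k$, with both radii $\ge\beta2^k$, hence joined.
\item The parent $W_{k+1,\lfloor m/2\rfloor}$ has length $\beta2^k$ and contains $W_{k,m}$. So a band-$k$ point of an open child and a band-$(k+1)$ point of its open parent are at distance $<\beta2^k$ with both radii $\ge\beta2^k$, hence joined.
\item All band-$k$ points of one window are pairwise joined.
\end{itemize}
The site events depend on the disjoint regions $W_{k,m}\times I_k$. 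They are therefore independent, each failing with probability exactly $e^{-\lambda\beta/4}$. \Cref{prop:peierls} applies once $e^{-\lambda\beta/4}\le2^{-7}$, i.e.\ $\lambda\beta\ge28\ln2\approx19.4$; in particular $\lambda\beta\ge31$ suffices.

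Your parent-absorbs-child step reverses the direction of the absorption property you stated. It can be rescued by a midpoint argument, since a band-$(k+1)$ chain with gaps below $\beta2^{k+1}$ puts every point of its span within $\beta2^k$ of a chain point. With the repair above it is not needed.
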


Together the two theorems show that the continuum model has a non-degenerate
phase transition. The simplified lattice model of \Cref{sec:simplified}
does not: there, \emph{every} admissible parameter is subcritical
(\Cref{thm:main}). This contrast between the two models runs through the
paper and is discussed where it is sharpest, in the criticality remark of
\Cref{sec:main} and in \Cref{rem:superlit}.

The location of the model within the trichotomy depends on the tail
exponent of the radii. For $\gamma>0$ replace the weak kernel by
$(t\vee s)^{\gamma}$, so that the connection rule becomes
\[
   (t\vee s)^{\gamma}\,\abs{x-y}\le\beta.
\]
A vertex $(x,t)$ has radius $\beta t^{-\gamma}$, Pareto with tail
exponent $1/\gamma$, and the model above is the case $\gamma=1$. We keep
the notation $\beta_c$ for the threshold at exponent $\gamma$. The next
proposition shows that the phase transition threshold is degenerate whenever $\gamma\neq 1$.

\begin{prop}\label{prop:gamma}
Fix $\gamma>0$, $\lambda>0$ and $\beta>0$, and consider the
weight-dependent random connection model with kernel $(t\vee s)^{\gamma}$
and profile function $\rho=\1_{[0,1]}$.
\begin{enumerate}[(i)]
\item If $\gamma<1$, then almost surely every connected component is
finite; hence $\beta_c=\infty$ at every intensity.
\item If $\gamma>1$, then almost surely an infinite connected component
exists; hence $\beta_c=0$ at every intensity.
\end{enumerate}
\end{prop}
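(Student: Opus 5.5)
Both parts rest on the same observation. Attach to each vertex $(x,t)$ its radius $\beta t^{-\gamma}$, Pareto with tail $\P(\text{radius}>r)=(\beta/r)^{1/\gamma}$ for $r\ge\beta$. Two vertices are joined exactly when their distance is at most the smaller of the two radii. The plan is to compare the number of edges crossing a point with the number of points in a connection range.

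\textbf{(i) $\gamma<1$.} We show that, almost surely, infinitely many sites on each side of the origin are crossed by no edge; this confines every component to a bounded interval. Any edge crossing a point $z$ joins $x<z<y$ with $y-x\le$ both radii, so both endpoints have radius at least $|x-z|$. Hence $z$ is crossed only if some vertex $x$ has radius exceeding $|x-z|$. The expected number of such vertices is
\[
\lambda\int_{\R}\min\bigl(1,(\beta/|x-z|)^{1/\gamma}\bigr)\,\de x<\infty
\]
because $1/\gamma>1$. This is only a first-moment bound: the expectation is finite, not small. So we pass to the stronger event that the interval $[z,z+1]$ is uncrossed, i.e.\ no vertex $x$ has radius exceeding $\operatorname{dist}(x,[z,z+1])$. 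This is a Poisson void event for a finite-intensity process (after the thinning), so it has a probability $p>0$ not depending on $z$. The events $A_z$ for $z\in\Z$ form a stationary, ergodic (mixing) sequence under translation, since they are factors of the Poisson process. The ergodic theorem then gives infinitely many $z>0$ and infinitely many $z<0$ with $A_z$. A component is therefore trapped between two consecutive uncrossed intervals, which contain finitely many points almost surely.

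\textbf{(ii) $\gamma>1$.} We run a dyadic renormalisation as in the sketch of \Cref{sec:super}, which is much easier here. Set $L_k=2^k$. The number of vertices in a window of length $L_k$ with radius at least $c L_k$ has mean of order
\[
\lambda L_k (\beta/(cL_k))^{1/\gamma}=\lambda\beta^{1/\gamma}c^{-1/\gamma}L_k^{1-1/\gamma}\to\infty.
\]
Call a window of level $k$ good if each of its halves contains a point of radius at least $4L_k$, say. All such points within distance $4L_k$ of each other are mutually adjacent, since both radii are at least the distance. Poisson concentration gives
\[
\P(\text{window bad})\le 2\exp(-cL_k^{1-1/\gamma}),
\]
which is summable over $k$ and also summable over the $O(2^{k_0})$-many subwindows one needs at each finer level.

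Good windows at level $k$ link to good windows at level $k+1$: a big point of level $k+1$ has radius at least $4L_{k+1}$, exceeds the distance to a big point of the level-$k$ window it contains, and the level-$k$ point has radius at least $4L_k\ge$ that distance. This requires choosing the levels so that the inner point's radius also covers the distance; a factor-$2$ window nesting with radius threshold $4L_k$ does this. By Borel--Cantelli, all windows containing the origin at levels $k\ge k_0$ are good almost surely. This yields an infinite chain of pairwise adjacent big points, hence an infinite component for every $\beta>0$ and $\lambda>0$.

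The main obstacle is (i): making the positive probability of an uncrossed interval rigorous and uniform. The first-moment bound is fine, but we must verify that the void event has positive probability. This holds because the integrated intensity is finite, so the void probability equals $\exp(-\text{finite})$. We must also invoke ergodicity rather than independence; alternatively, independence can be obtained by thinning to well-separated $z$ and controlling long radii via Borel--Cantelli.
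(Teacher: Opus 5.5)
Your proof is correct, and in part (ii) it takes a genuinely different route from the paper.

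In (i) the paper notes that every component lies inside an occupied component of the Boolean model with radii $\beta t^{-\gamma}$, and cites Meester--Roy: finite mean radius excludes unbounded occupied components in one dimension. Your argument is the standard proof of that fact, so here you only make it self-contained. You bound the void probability of an uncrossed unit interval below by $\exp\bigl(-\lambda(1+2\E R)\bigr)>0$ and then apply the ergodic theorem.

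In (ii) the paper does not renormalise at all. It restricts to marks below $\eps$, rescales by $(x,t)\mapsto(\eps x,t/\eps)$ to get a copy of the model with parameter $\beta\eps^{1-\gamma}$, and invokes \Cref{thm:super} once $\lambda\beta\eps^{1-\gamma}\ge31$. Your single-tower Borel--Cantelli argument is more elementary and independent of \Cref{thm:super}. The number of points of radius at least $4L_k$ in a window of length $L_k$ has mean of order $L_k^{1-1/\gamma}\to\infty$. So failures along the nested windows $[0,2^k)$ are summable, and chosen points at consecutive levels are adjacent since their distance is below $2L_k\le4L_k$.

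At $\gamma=1$ that mean is of order one per scale and a single tower dies almost surely (\Cref{rem:tower}), which is why the paper needs the Peierls argument there. The paper's proof of (ii) exposes the same criticality through the factor $\eps^{1-\gamma}$; it trades elementarity for brevity, given \Cref{thm:super}.

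Two cosmetic points. The conditions on window halves and on finer subwindows can be dropped. Also, the chain is adjacent only between consecutive levels, not pairwise; infinitely many of its points are distinct because the chosen radii are at least $4L_k\to\infty$.
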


\begin{proof}
(i) A neighbour of $(x,t)$ has its position within
$\beta(t\vee s)^{-\gamma}\le\beta t^{-\gamma}=:R_x$ of $x$, so every
connected component lies inside one occupied component of the Boolean
model on $\R$ with centres $\X$ and radii $R_x$. The radii are
independent of the positions, with
\[
   \E[R_x]=\beta\int_0^1 t^{-\gamma}\,\de t=\frac{\beta}{1-\gamma}<\infty,
\]
and a one-dimensional Boolean model whose radii have finite mean has no
unbounded occupied component at any intensity
\cite[Theorem~3.1]{MeesterRoy1996}. As $\X$ is locally finite, an
infinite component would occupy an unbounded set of positions, so almost
surely there is none.

(ii) First, $(t\vee s)^{\gamma}\le t\vee s$ for $t\vee s\in(0,1)$ and
$\gamma>1$, so at equal $\beta$ the graph at exponent $\gamma$ contains,
on the same points, the graph at exponent $1$; by \Cref{thm:super} the
latter almost surely has an infinite component once $\lambda\beta\ge31$.
Second, let $\beta>0$ be arbitrary and $\eps\in(0,1)$. The points with
mark below $\eps$ form a Poisson process of intensity $\lambda$ on
$\R\times(0,\eps)$, and $(x,t)\mapsto(\eps x,t/\eps)$ sends it to a
Poisson process of intensity $\lambda$ on $\R\times(0,1)$ while
transforming the rule into
$(t'\vee s')^{\gamma}\abs{x'-y'}\le\beta\eps^{1-\gamma}$. The induced
subgraph is therefore a copy of the model at exponent $\gamma$ with
parameter $\beta\eps^{1-\gamma}$, which for $\gamma>1$ exceeds any bound
as $\eps\downarrow0$. Choosing $\eps$ with
$\lambda\beta\eps^{1-\gamma}\ge31$ produces an infinite component in the
induced subgraph, hence in the graph.
\end{proof}

At $\gamma=1$ the restriction step returns the parameter unchanged,
$\beta\eps^{1-\gamma}=\beta$, and the argument gives nothing at the
critical exponent.

The containment in part (ii) is not specific to the family
$(t\vee s)^{\gamma}$. In the taxonomy of
\cite{GracarLuechtrathMoerters2021}, our model is the weight-dependent
random connection model in $d=1$ with the boundary case $\gamma=0$ of
the preferential attachment kernel
$g_{\mathrm{pa}}(s,t)=\beta^{-1}(s\vee t)^{1-\gamma}(s\wedge t)^{\gamma}$
and with indicator profile; the factor $\beta^{-1}$ is the convention of
\cite{GracarLuechtrathMoerters2021}, in which $\beta$ enters through the
kernel. For $\gamma\in(0,1)$, a parameter of the kernel and not the
exponent of \Cref{prop:gamma}, the same kernel and profile give the
age-dependent random connection model of
\cite{GracarGrauerLuechtrathMoerters2019} on the line, which joins
$(x,t)$ and $(y,s)$ precisely when
$(t\vee s)^{1-\gamma}(t\wedge s)^{\gamma}\abs{x-y}\le\beta$. From
$(t\wedge s)^{\gamma}\le(t\vee s)^{\gamma}$ the kernel is pointwise
decreasing in $\gamma$, so our model is the smallest graph of the
family. Theorem~1.4 of \cite{GracarLuechtrathMoench2022} determines the
phase diagram of the family in $d=1$ for profiles of polynomial decay
$\delta>2$ except for $\gamma<\tfrac12$, which is posed there as an open
problem; \Cref{fig:adrcm} shows the diagram in the coordinates of the
interpolation kernel $(s\wedge t)^{\gamma}(s\vee t)^{\alpha}$ of that
paper, in which the family is the line $\alpha=1-\gamma$ and our model
the endpoint $(0,1)$. Our third main result is a phase transition for
the age-dependent random connection model, for the indicator profile and
for profiles of polynomial decay.

\begin{theorem}\label{thm:adrcm}
Fix $\gamma\in(0,1)$, $\lambda>0$ and $\beta>0$, and consider the
weight-dependent random connection model with kernel
$(t\vee s)^{1-\gamma}(t\wedge s)^{\gamma}$ and a profile function $\rho$
that either is the indicator $\1_{[0,1]}$, in which case we set
$\delta=\infty$, or satisfies
$c(1\wedge r^{-\delta})\le\rho(r)\le C(1\wedge r^{-\delta})$ for all
$r>0$, for some $\delta\in(2,\infty)$ and constants $0<c\le C$. If
$\lambda\beta$ is sufficiently large, depending on $\rho$, then almost
surely an infinite connected component exists; for the indicator profile
it suffices that $\lambda\beta\ge31$. If $\gamma<\delta/(\delta+1)$ and
$\lambda\beta$ is sufficiently small, depending on $\gamma$ and $\rho$,
then almost surely every connected component is finite; for the
indicator profile and $\gamma<\tfrac12$ it suffices that
$\lambda\beta<(1-2\gamma)/8$. In particular, at intensity one the
critical value of the age-dependent random connection model satisfies
\[
   \beta_c\in(0,\infty)\qquad\text{for every }
   \gamma<\frac{\delta}{\delta+1},
\]
and for the indicator profile $\beta_c\in(0,31]$ for every
$\gamma\in(0,1)$.
\end{theorem}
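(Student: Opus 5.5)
The supercritical half comes from comparing kernels with the model of \Cref{thm:super}. The subcritical half comes from a first-moment bound on the number of paths, for which a Schur test gives the sharp geometric rate.

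\emph{Supercritical part, indicator profile.} Since $t\wedge s\le t\vee s$ and $\gamma\in(0,1)$,
\[
(t\vee s)^{1-\gamma}(t\wedge s)^{\gamma}\le t\vee s .
\]
So whenever $(t\vee s)|x-y|\le\beta$, the age-dependent rule is also satisfied. On the same Poisson process, the age-dependent graph therefore contains the graph with the weak kernel, exactly as in \Cref{prop:gamma}(ii). \Cref{thm:super} then gives an infinite component whenever $\lambda\beta\ge31$, and hence $\beta_c\le31$ at intensity one.

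\emph{Supercritical part, polynomial profile.} Here $\rho\ge c\1_{[0,1]}$, so every pair that is joined under the indicator rule is now joined independently with probability at least $c$. Monotone containment alone is not enough, so I would rerun the renormalisation of \Cref{sec:super}.
\begin{enumerate}[(i)]
\item Strengthen the local ``dense window'' event: a dyadic window must contain $\Theta(\lambda\beta)$ band-$k$ points with gaps below $\beta2^k$, \emph{and} every pair of these points within distance $\beta2^k$ of each other must be $c$-percolation connected through the points of the window.
\item The absorption step still holds. A band-$k$ point and any point of a higher band within range are joined with probability at least $c$. With $\Theta(\lambda\beta)$ band-$k$ points nearby, a higher-band point fails to attach to all of them only with probability $e^{-\Theta(\lambda\beta)}$.
\item The failure probability of the enlarged local event is therefore still $e^{-\Theta(c\lambda\beta)}$, since a dense Erd\H{o}s--R\'enyi graph with edge probability $c$ is connected except with exponentially small probability.
\item The enlarged events remain independent across the tiles of the binary tiling.
\end{enumerate}
The Peierls argument on the tiling then applies verbatim once $\lambda\beta$ is large enough, depending on $c$.

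\emph{Subcritical part, indicator profile with $\gamma<\tfrac12$.} By the Mecke formula, the expected number of self-avoiding paths of length $n$ started from a typical point with mark $t_0$ is at most
\[
\int\!\cdots\!\int\prod_{i=1}^{n}K(t_{i-1},t_i)\,\de t_1\cdots\de t_n ,
\qquad
K(t,s)=2\lambda\beta\,(t\vee s)^{\gamma-1}(t\wedge s)^{-\gamma},
\]
where $K(t,s)$ is $\lambda$ times the Lebesgue measure of the set of admissible positions. I would apply Schur's test with $h(s)=s^{-1/2}$:
\[
\int_0^t K(t,s)\,s^{-1/2}\,\de s = 2\lambda\beta\,t^{\gamma-1}\int_0^t s^{-\gamma-1/2}\,\de s=\frac{2\lambda\beta}{1/2-\gamma}\,t^{-1/2},
\]
which converges precisely because $\gamma<\tfrac12$. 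The other piece satisfies
\[
\int_t^1 K(t,s)\,s^{-1/2}\,\de s\le \frac{2\lambda\beta}{1/2-\gamma}\,t^{-1/2}.
\]
Adding the two gives $Kh\le \frac{8\lambda\beta}{1-2\gamma}\,h$. Since $1\le h$, the path integral is at most $\big(\tfrac{8\lambda\beta}{1-2\gamma}\big)^n t_0^{-1/2}$. This decays geometrically exactly when $\lambda\beta<(1-2\gamma)/8$. Summing over $n$ shows the cluster of a typical point is almost surely finite, and the Mecke formula transfers this to every point of $\X$.

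\emph{Subcritical part, polynomial profile.} The same computation works because $\int\rho(\beta^{-1}g|x|)\,\de x=\beta g^{-1}\int\rho$, which only changes the constant; this covers $\gamma<\tfrac12$. For $\tfrac12\le\gamma<\delta/(\delta+1)$, the absence of percolation for small $\lambda\beta$ is part of Theorem~1.4 of \cite{GracarLuechtrathMoench2022}, which I would cite. The step I expect to be the main obstacle is the polynomial-profile supercritical case, because the proof of \Cref{thm:super} uses deterministic absorption. Making the local events robust to independent edge thinning, while keeping them independent across tiles, is where care is needed.
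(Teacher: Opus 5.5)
Much of your plan matches the paper. The kernel comparison for the indicator profile is the paper's argument. Your Schur test with $h(s)=s^{-1/2}$ correctly reproduces the bound $\big(8\lambda\beta/(1-2\gamma)\big)^n$, which the paper imports from the proof of Lemma~2.1 of \cite{GracarLuechtrathMoerters2021}. For polynomial profiles with $\gamma<\tfrac12$, your direct computation with $\int\rho$ is a valid replacement for the paper's containment in the $\gamma=\tfrac12$ model.

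\textbf{Missing case (indicator, $\tfrac12\le\gamma<1$).} For the indicator profile the statement sets $\delta=\infty$. So the subcritical claim, and the conclusion $\beta_c\in(0,31]$ for every $\gamma\in(0,1)$, include $\tfrac12\le\gamma<1$. You treat the indicator only for $\gamma<\tfrac12$, and you invoke \cite{GracarLuechtrathMoench2022} only for polynomial profiles. The first-moment method cannot reach this range. With $h(s)=s^{-a}$ the Schur test needs $\gamma<a<1-\gamma$. For $\gamma\ge\tfrac12$ even the expected number of paths of length two is infinite: $\int_0^1K(t_1,s)\,\de s$ is of order $t_1^{-\gamma}$ as $t_1\downarrow0$, and $K(t_0,t_1)$ is of order $t_1^{-\gamma}$ for $t_1<t_0$, so the integral contains $\int_0^{t_0}t_1^{-2\gamma}\,\de t_1=\infty$. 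The missing step is a profile comparison. The profile $1\wedge r^{-\delta}$ equals $1$ on $[0,1]$, so for $\delta>\max(2,\gamma/(1-\gamma))$ the indicator graph is contained in the graph with profile $1\wedge r^{-\delta}$, and that model satisfies $\gamma<\delta/(\delta+1)$. The paper does the equivalent by noting that Corollary~1.5 of \cite{GracarLuechtrathMoench2022} needs only the upper bound $\rho\le C(1\wedge r^{-\delta})$, which the indicator satisfies for every such $\delta$.

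\textbf{Independence claim (iv) in the polynomial supercritical step.} You assert (iv) rather than prove it, and with the natural realisation it is false. That realisation, used in the paper, has independent uniforms indexed by pairs of points. Under thinning, the linkage of \Cref{lem:link} is no longer deterministic. The absorption in your step (ii) uses edge variables between points of two different tiles (a window and its same-scale neighbour, or a window and the parent cell $J$), and it needs both windows to be populated. If you put it into the site events, neighbouring site events become dependent. If you leave it out, openness of two adjacent sites no longer implies that their runs are joined. Either way \Cref{prop:peierls}, which requires independent sites, does not apply verbatim.

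The paper handles this in the proof of \Cref{cor:adrcmclass}:
\begin{enumerate}[(a)]
\item Each inter-tile pair is assigned to exactly one edge of $\mathsf H$ (\Cref{lem:thinned}(iii)).
\item A vertex is \emph{good} if its window event and the link events of all incident edges hold.
\item Good events at $\mathsf H$-distance at least $3$ are independent.
\item From each contour of size $s$ given by \Cref{lem:contour}, one keeps at least $s/26$ mutually distant vertices.
\end{enumerate}
The Peierls sum then becomes $\sum_s s\,64^{s-1}\eps_2^{s/26}$, which is small once $64\eps_2^{1/26}\le\tfrac12$.

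Alternatively, you could make (iv) true by construction. Attach i.i.d.\ uniforms to the points, and let the pair formed by a designated point $x$ of one fixed endpoint tile and the $m$-th designated point of the other tile read the $m$-th uniform of $x$. Then openness of both tiles forces the link. Either way, some such device has to be supplied.
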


\begin{figure}[ht]\centering
\begin{tikzpicture}[scale=0.45, every node/.style={scale=0.75}]
  \fill[gray!14] (0,0) -- (0,10) -- (6,4) -- (6,0) -- cycle;
  \fill[orange!14] (7.5,0) -- (10,0) -- (7.5,2.5) -- cycle;
  \draw[->] (0,0) -- (13,0) node[right] {$\gamma$};
  \draw (5,0) node[anchor=north] {$\tfrac12$}
        (10,0) node[anchor=north] {$1$};
  \draw[dotted] (0,10) -- (10,10);
  \draw (10,0) -- (10,10) (10,10) -- (0,13);
  \draw (0,0) -- (0,10.5);
  \draw[->] (0,12.5) -- (0,13.3) node[above] {$\alpha$};
  \draw[decorate, decoration={snake, segment length=10pt, amplitude=1mm}]
        (0,10.5) -- (0,12.5);
  \draw (0,0) node[anchor=east] {$0$}
        (0,5) node[anchor=east] {$\tfrac12$}
        (0,10) node[anchor=east] {$1$}
        (0,13) node[anchor=east] {$2$};
  \draw (6,-0.1) node[anchor=north, scale=0.85] {$\tfrac{\delta-1}{\delta}$};
  \draw (6,0) -- (6,4);
  \draw (7.5,-0.1) node[anchor=north, scale=0.85] {$\tfrac{\delta}{\delta+1}$};
  \draw (7.5,0) -- (7.5,2.5);
  \draw[dotted] (5,0) -- (5,5);
  \draw[dashed] (0,0) -- (10,10);
  \draw (0,10) -- (10,0);
  \draw[very thick, blue!65!black] (5,5) -- (7.5,2.5);
  \draw[very thick, red!75!black] (0,10) -- (5,5);
  \node[rotate=-45, red!75!black] at (2.8,7.8) {$\beta_c\in(0,\infty)$};
  \node[vtx] at (0,10) {};
  \node[anchor=west] at (1.4,11.15) {this paper};
  \draw[->, bend right=20] (1.25,11.05) to (0.15,10.15);
  \draw (4.8,-2.0) node[anchor=north] {Boolean model};
  \draw[->, bend right=20] (3.4,-1.9) to (2.6,-0.15);
  \draw (-0.45,11.5) node[anchor=east, align=left] {ultra-small\\ scale-free\\ geometric\\ network};
  \draw (10.5,2.3) node[anchor=west, align=left] {age-dependent\\ random connection\\ model};
  \draw[->, bend right=25] (11.5,3.5) to (6.3,3.9);
  \draw (10.5,7.5) node[anchor=west, align=left] {scale-free\\ percolation};
  \draw[->, bend left=25] (11.2,7) to (7.2,7.2);
  \node[vtx] at (0,0) {};
  \draw (-0.4,-0.9) node[align=left, anchor=north] {random connection model};
  \draw[->] (0.55,-0.8) -- (0.1,-0.14);
  \node at (3.6,1.6) {$\beta_c=\infty$};
  \node[rotate=-62, scale=0.7] at (6.7,1.7) {$\beta_c\in(0,\infty)$};
  \node[rotate=-45, scale=0.8] at (8.45,0.7) {$\beta_c=0$};
  \node[scale=1.3] at (5.5,10.45) {$\beta_c=0$};
\end{tikzpicture}
\caption{ The phase diagram of the interpolation kernel
$(s\wedge t)^{\gamma}(s\vee t)^{\alpha}$ of
\cite{GracarLuechtrathMoench2022} in $d=1$, for profiles of polynomial
decay $\delta>2$; the regions off the line $\alpha=1-\gamma$ are those
of Theorem~1.4 there. Our model is the marked endpoint
$(\gamma,\alpha)=(0,1)$ of that line. Along the line the kernel
decreases as $\gamma$ grows, so percolation for $\lambda\beta\ge31$
spreads from the endpoint to every $\gamma$. On the blue segment
$\tfrac12\le\gamma<\delta/(\delta+1)$ the transition was known to be
non-degenerate by Theorem~1.4 and Corollary~1.5 there; the red segment
$\gamma<\tfrac12$ was open and follows from \Cref{thm:adrcm}. }
\label{fig:adrcm}
\end{figure}

\begin{proof}
From $(t\wedge s)^{\gamma}\le(t\vee s)^{\gamma}$ we get
$(t\vee s)^{1-\gamma}(t\wedge s)^{\gamma}\le t\vee s$ pointwise, so for
the indicator profile the graph contains, at every $\beta$ and on the
same points, the weight-dependent random connection model of
\Cref{thm:super}, and for $\lambda\beta\ge31$ an infinite component
exists almost surely. For a profile of polynomial decay the
supercritical phase is \Cref{cor:adrcmclass}, proved in
\Cref{sec:super}.

For the subcritical statement, Corollary~1.5 of
\cite{GracarLuechtrathMoench2022} states that in $d=1$, for the kernel
$(t\wedge s)^{\gamma}(t\vee s)^{\alpha}$ with $\alpha=1-\gamma$ and
any non-trivial profile bounded above by a constant multiple of
$1\wedge r^{-\delta}$ for some $\delta\in(2,\infty)$, the critical
value at intensity one lies in $(0,\infty)$ whenever
$\tfrac12\le\gamma<\delta/(\delta+1)$. A profile of polynomial decay
satisfies the hypothesis with its own $\delta$, and the indicator
profile with any $\delta$ above $\max(2,\gamma/(1-\gamma))$, so the
corollary covers every admissible profile in the range
$\gamma\ge\tfrac12$. For $\gamma<\tfrac12$ the kernel is pointwise
decreasing in $\gamma$, so the graph is contained, on the same points
and with the same profile, in the model with $\gamma=\tfrac12$, and
positivity of the threshold transfers. Finally, the map
$(x,t)\mapsto(\lambda x,t)$ sends intensity $\lambda$ to intensity one
and $\beta$ to $\lambda\beta$.

It remains to prove the explicit bound for the indicator profile and
$\gamma<\tfrac12$. The first-moment computation in the proof of
Lemma~2.1 of \cite{GracarLuechtrathMoerters2021} bounds the expected
number of self-avoiding paths of length $n$ from a fixed vertex by
$\big(8\lambda\beta/(1-2\gamma)\big)^n$; the computation involves the
profile only through its total mass, and the indicator profile at
parameter $\beta$ enters with edge density $2\beta$. For
$\lambda\beta<(1-2\gamma)/8$ the bound is summable in $n$, and the
argument there shows that almost surely every connected component is
finite.
\end{proof}

In particular $\beta_c=\infty$ occurs nowhere on the line
$\alpha=1-\gamma$: the supercritical bound transfers at every
$\gamma\in(0,1)$, for the indicator profile and for every profile of
polynomial decay. For $\gamma<(\delta-1)/\delta$ the line is the upper
boundary of the region $\beta_c=\infty$ of \Cref{fig:adrcm}, so the
threshold is infinite below the line and finite on it.

\subsection[First properties of the model]{ First properties of the
model }

 We return to the model of \Cref{thm:continuum,thm:super}, with the
weak kernel and the indicator profile. 
The next two propositions say that long edges are everywhere, and that two of
them belong to the same connected component as soon as their spans interleave. Together they make an infinite
component look hard to avoid; how it is nevertheless avoided for
$\lambda\beta<1$ occupies \Cref{sec:simplified,sec:continuum}.

The first proposition says that the number of edges crossing a point is not merely of
infinite mean but almost surely infinite, and the proof gives this at every
point of the line at once. Neither main theorem uses it, but it shows that no
point of the line is free of crossing edges, so a disconnection argument has
to control long edges rather than exclude them. Its proof is the multi-scale
scheme used repeatedly below in its simplest instance: disjoint dyadic blocks,
moments uniform in the scale, Paley--Zygmund, and Borel--Cantelli across
independent scales. It is also the input to
\Cref{cor:diameter,cor:diametercont}.

\begin{prop}\label{prop:edges-io}
    Consider the weight-dependent random connection model on $\R$ with connection probability $\1_{[0,\beta]}((t\vee s)|x-y|)$. Then, for $\beta<\infty$, almost surely every point of $\R$ is crossed by infinitely many edges.
\end{prop}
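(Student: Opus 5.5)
We prove the claim for a fixed point first, say the origin, and then pass to every point of $\R$ simultaneously. The passage to all points uses a single observation. If an edge $\{(x,t),(y,s)\}$ with $x<y$ crosses a point $z$ and is not too close to its endpoints, then it crosses every point of a neighbourhood of $z$. So we will build crossing edges inside a dyadic family of windows and make them cross a whole interval $[-1,1]$ rather than just $0$. Translation invariance and a countable union over the intervals $[k-1,k+1]$, $k\in\Z$, then cover the line. By the scaling $(x,t)\mapsto(\lambda x,t)$ we may take $\lambda=1$ and fix $\beta>0$.

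For $k\ge1$ we define a scale-$k$ event $A_k$ that depends only on points whose positions lie in $I_k:=[-2^{k+1},-2^k)\cup(2^k,2^{k+1}]$. For distinct $k$ these sets are disjoint, so the events $A_k$ are independent by the Poisson property. Let $N_k$ count the pairs $(x,t)$, $(y,s)$ of $\X$ with $x\in[-2^{k+1},-2^k)$, $y\in(2^k,2^{k+1}]$ and $(t\vee s)|x-y|\le\beta$. Every such pair is an edge crossing all of $[-1,1]$. We then prove three moment bounds.

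First, the mean: by Mecke's formula,
\[
\E N_k=\int\!\!\int \P\big((t\vee s)|x-y|\le\beta\big)\,\de x\,\de y=\int\!\!\int\min\Big(1,\frac{\beta^2}{(y-x)^2}\Big)\de x\,\de y .
\]
Here $y-x\in[2^{k+1},2^{k+2}]$ and both ranges have length $2^k$, so the integrand is of order $\beta^2 4^{-k}$ for large $k$. This gives $\E N_k\ge c\beta^2$ uniformly in $k$.

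Second, the second moment: we expand $\E N_k^2$ over pairs of pairs. Disjoint pairs contribute $(\E N_k)^2$. Pairs sharing one vertex, say $(x,t)$ with partners $y,y'$, contribute
\[
\int\!\!\int\!\!\int \E_t\Big[\min\Big(1,\frac{\beta}{t|x-y|}\Big)\cdots\Big].
\]
Conditioning on $t$, each partner independently satisfies $s\le\beta/|x-y|$ and needs $t\le\beta/|x-y|$, so this term is of order $2^{3k}\cdot(\beta 2^{-k})\cdot(\beta 2^{-k})^2=O(\beta^3)$. That is uniform in $k$ as well. Hence $\E N_k^2\le C(\beta)$, and Paley--Zygmund gives $\P(N_k\ge1)\ge(\E N_k)^2/\E N_k^2\ge p>0$ uniformly in $k$.

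Since the events $\{N_k\ge1\}$ are independent and $\sum_k\P(N_k\ge1)=\infty$, the second Borel--Cantelli lemma shows that almost surely $N_k\ge1$ for infinitely many $k$. The edges obtained at different scales are distinct, because their endpoints lie in disjoint windows, and each crosses every point of $[-1,1]$. So almost surely every point of $[-1,1]$ is crossed by infinitely many edges. The same holds for every translate $[k-1,k+1]$, and a countable intersection of almost sure events covers $\R$.

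The main obstacle is the uniform bound on the second moment, that is, controlling the shared-vertex terms through the common mark, which is where the dependence enters. The computation above indicates that these terms stay bounded because a shared low-mark vertex is itself rare at the right rate. If that bound fails, the fallback is to replace $N_k$ by the indicator that some point of the left window with mark at most $\beta 2^{-k-2}$ has a partner in the right window of mark at most $\beta 2^{-k-2}$. Given the Poisson counts of such low-mark points on each side, this is an independent product event, and it has probability bounded below directly.
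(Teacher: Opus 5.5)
Your proposal is correct and follows essentially the same route as the paper: independent dyadic blocks on either side of the origin, a Mecke first moment bounded below uniformly in the scale, a second moment whose shared-endpoint terms are $O(\beta^3)$ through the common mark, then Paley--Zygmund and the second Borel--Cantelli lemma. The only differences are minor. The paper skips your translation-and-countable-union step by observing that the scale-$k$ edges already cross $[-2^k,2^k]$ in your indexing, so every point of $\R$ lies in all but finitely many of their spans. Your fallback (a product of two independent Poisson low-mark events) would also work and would avoid Paley--Zygmund altogether.
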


\begin{figure}[ht]\centering
\begin{tikzpicture}[xscale=0.82]
  \draw[zline] (-8.8,0) -- (9.1,0);
  \fill (0,0) circle (1.5pt); \node[below=2pt] at (0,0) {$0$};
  \foreach \x in {-4.2, -2.1, 2.1, 4.2}{\fill (\x,0) circle (1.2pt);}
  \node[below=2pt] at (-2.1,0) {\footnotesize $-2^{k\text{-}1}$};
  \node[below=2pt] at (-4.2,0) {\footnotesize $-2^{k}$};
  \node[below=2pt] at (2.1,0)  {\footnotesize $2^{k\text{-}1}$};
  \node[below=2pt] at (4.2,0)  {\footnotesize $2^{k}$};
  \draw[brc] (-2.1,0.45) -- (-4.2,0.45) node[midway,above=4pt]{$\mathcal L_k$};
  \draw[brc] (4.2,0.45)  -- (2.1,0.45)  node[midway,above=4pt]{$\mathcal R_k$};
  \fill[blue!65!black] (-3.0,0) circle (1.4pt);
  \fill[blue!65!black] (2.9,0) circle (1.4pt);
  \draw[outerarch] (-3.0,0) to[bend left=32] (2.9,0);
  \node[blue!65!black,above] at (0,1.25) {\footnotesize an $M_k$ edge across the origin};
  \draw[brc, gray!55] (-8.4,-0.62) -- (-4.2,-0.62) node[midway,below=4pt]{\footnotesize $\mathcal L_{k+1}$};
  \draw[brc, gray!55] (4.2,-0.62) -- (8.4,-0.62) node[midway,below=4pt]{\footnotesize $\mathcal R_{k+1}$};
\end{tikzpicture}
\caption{
$\mathcal L_k=(-2^{k},-2^{k-1}]$, $\mathcal R_k=[2^{k-1},2^{k})$ (to scale). $M_k$ counts
origin-crossing edges from $\mathcal L_k$ to $\mathcal R_k$; distinct scales use disjoint vertices,
so the $M_k$ are independent, and Paley--Zygmund plus Borel--Cantelli give $M_k\ge1$
infinitely often.}
\label{fig:edges-io}
\end{figure}

\begin{proof}
Let $X$ be the number of origin-crossing edges, i.e.\ edges $\{(x,t),(y,s)\}$ with $x\le 0<y$.
Averaging the connection indicator over the two independent uniform marks, two points at
positions $x\le 0<y$ at distance $d=y-x$ form an edge with probability
\begin{equation}\label{eq:pd}
   p(d):=\int_0^1\!\!\int_0^1\1\big[(t\vee s) d\le\beta\big] \mathrm dt \mathrm ds
        =\big(\min(1,\beta/d)\big)^2
        =\begin{cases}1,& d\le\beta,\\[2pt]\beta^2/d^2,& d>\beta.\end{cases}
\end{equation}

By the Mecke equation, with $u=-x\ge 0$ and $d=y-x$,
\[
   \E[X]=\int_{-\infty}^{0}\!\int_{0}^{\infty} p(y-x) \mathrm dy \mathrm dx
        =\int_0^\infty d p(d) \mathrm dd
        =\tfrac{\beta^2}{2}+\beta^2\!\int_\beta^\infty\frac{\mathrm dd}{d}=\infty .
\]
This is the harmonic (logarithmic) divergence, but an infinite mean does not by itself force $X=\infty$ almost surely. We make the argument across independent scales.

Fix $k_0$ with $2^{k_0-1}\ge\beta$. For $k\ge k_0$ put
$\cL_k=(-2^{k},-2^{k-1}]$ and $\cR_k=[2^{k-1},2^{k})$, and let $M_k$ be the number of edges with
left endpoint position in $\cL_k$ and right endpoint position in $\cR_k$ (see \Cref{fig:edges-io}). The strips
$\cL_k\times(0,1)$ and $\cR_k\times(0,1)$ are pairwise disjoint across $k$, so
$(M_k)_{k\ge k_0}$ are independent. For $x\in\cL_k$, $y\in\cR_k$ one has $d=y-x\in[2^k,2^{k+1})$,
so $p(d)\in(\beta^2/4^{k+1}, \beta^2/4^{k}]$ by \eqref{eq:pd}.

The rectangle $\cL_k\times\cR_k$ has area $(2^{k-1})^2=4^{k-1}$, so
\[
   \frac{\beta^2}{16} \le \E[M_k]=\int_{\cL_k}\!\!\int_{\cR_k}p(y-x) \mathrm dy \mathrm dx \le \frac{\beta^2}{4},
   \qquad\text{uniformly in }k.
\]

Write $\E[M_k^2]=\E[M_k]+\E[M_k(M_k-1)]$ and split the ordered
pairs of distinct edges by shared endpoints (two distinct edges share at most one).
\begin{itemize}
\item \emph{Four distinct points.} By Mecke this is the product integral, $\le(\E[M_k])^2\le\beta^4/16$.
\item \emph{Shared left endpoint} $(x,t)\in\cL_k$, $(y_1,s_1),(y_2,s_2)\in\cR_k$. With $d_i=y_i-x$,
      the mark integral factorises ($t\le\beta/d_1$, $t\le\beta/d_2$, $s_i\le\beta/d_i$):
      \[
        \frac{\beta}{\max(d_1,d_2)}\cdot\frac{\beta}{d_1}\cdot\frac{\beta}{d_2}
         \le \frac{\beta^3}{2^{k} d_1 d_2} \le \frac{\beta^3}{8^{k}},
      \]
      using $\max(d_1,d_2)\ge2^k$, $d_i\ge2^k$. Over the ordered position volume
      $2^{k-1}(2^{k-1})^2=8^{k-1}$ this is $\le\beta^3/8$.
\item \emph{Shared right endpoint.} Identical, $\le\beta^3/8$.
\end{itemize}
Hence $\E[M_k^2]\le \beta^2/4+ \beta^4/16+\beta^3/4=:K(\beta)<\infty$, uniformly in $k$.

For every $k\ge k_0$,
\[
   \P(M_k\ge 1) \ge \frac{(\E[M_k])^2}{\E[M_k^2]} \ge \frac{(\beta^2/16)^2}{K(\beta)} =: p>0 .
\]
The $\{M_k\ge 1\}$ are independent with $\sum_{k\ge k_0}\P(M_k\ge1)=\infty$, so by the second
Borel--Cantelli lemma $\P(M_k\ge1\text{ i.o.})=1$. Each such scale gives a distinct
origin-crossing edge, so $X=\infty$ almost surely. The edge produced at scale $k$ has left
endpoint at most $-2^{k-1}$ and right endpoint at least $2^{k-1}$, so every $x\in\R$ lies in
all but finitely many of these spans, and the same edges therefore cross every point of the
line.
\end{proof}

\begin{remark}\label{rem:edges-io-Z}
On $\Z$, in the simplified model of \Cref{sec:simplified}: $p(d)=x_m^2/d^2$ for $d\ge1$, $\E[X]=x_m^2\sum_{d\ge1}1/d=\infty$, the dyadic blocks
$\cL_k=(-2^{k},-2^{k-1}]$ and $\cR_k=[2^{k-1},2^{k})$ carry $4^{k-1}$ vertex pairs, and the shared-endpoint sums reproduce the bounds
with $\beta$ replaced by $x_m$; independence across scales again gives the conclusion, and since
$\Z$ is countable, almost surely every site is crossed by infinitely many edges.
\end{remark}

The second proposition concerns two edges whose endpoints interleave.

\begin{prop}\label{prop:crossing}
    Let $\{a,b\}$ and $\{c,d\}$ be edges whose endpoints interleave, $a<c<b<d$.
    Then $\{c,b\}$ is also an edge; in particular $a,b,c,d$ all belong to the
    same connected component.
\end{prop}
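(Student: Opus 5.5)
Since the connection rule is symmetric in the two endpoints, I will work with radii: a vertex at position $x$ with mark $t$ has reach $R_x=\beta/t$. The pair $\{u,v\}$ is an edge exactly when $|u-v|\le R_u\wedge R_v$. This is the same $\min$-rule as in the simplified model on $\Z$, so one argument covers both settings. The plan is to show directly that $c$ and $b$ each lie within the other's reach.

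First, $R_c\ge d-c$, because $\{c,d\}$ is an edge. Since $c<b<d$, we have $b-c<d-c\le R_c$. Second, $R_b\ge b-a$, because $\{a,b\}$ is an edge. Since $a<c<b$, we have $b-c<b-a\le R_b$. Together these give
\[
   |b-c| \;<\; (d-c)\wedge(b-a)\;\le\; R_c\wedge R_b ,
\]
so $\{c,b\}\in E$ by the connection rule.

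The component statement then follows at once. The vertex $a$ is adjacent to $b$, the vertex $b$ is adjacent to $c$, and $c$ is adjacent to $d$, so $a,b,c,d$ lie on the path $a-b-c-d$.

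There is no real obstacle here. The only point to check is that the correct inner pair is $\{c,b\}$, the two endpoints that lie strictly inside the opposite edge's span. The outer pair $\{a,d\}$ need not be an edge, since $d-a$ can exceed both reaches. In the continuum setting one should also note that the inequalities are strict, so ties at distance exactly $\beta/t$ are irrelevant.
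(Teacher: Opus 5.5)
Your proof is correct and is essentially the paper's argument. The paper writes it in mark coordinates, deriving $t_b(b-c)<\beta$ and $t_c(b-c)<\beta$ from the two edges, which is exactly your pair of inequalities $b-c<b-a\le R_b$ and $b-c<d-c\le R_c$ under $R=\beta/t$. Like you, the paper notes that the same computation applies verbatim in the simplified model on $\Z$.
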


\begin{figure}[ht]\centering
\begin{tikzpicture}[xscale=1.05]
  \draw[zline] (-0.6,0) -- (7.2,0);
  \foreach \x/\lab in {0.4/a, 2.4/c, 4.4/b, 6.4/d}
     {\fill (\x,0) circle (1.5pt); \node[below=2pt] at (\x,0) {$\lab$};}
  \draw[outerarch] (0.4,0) to[bend left=52] (4.4,0);
  \draw[innerarch] (2.4,0) to[bend left=52] (6.4,0);
  \draw[forced]    (2.4,0) to[bend left=52] (4.4,0);
  \node[blue!65!black] at (1.05,1.2) {$\{a,b\}\in E$};
  \node[teal!75!black] at (5.75,1.2) {$\{c,d\}\in E$};
  \node[red!75!black, inner sep=1pt] at (3.4,0.18) {$\{b,c\}$};
  \node[red!75!black, align=center] at (3.4,-1.0)
     {\footnotesize forced: $b-c<b-a\le R_b$, $\ b-c<d-c\le R_c$};
\end{tikzpicture}
\caption{Two edges whose endpoints interleave,
$a<c<b<d$. Because $b-c$ is shorter than both $b-a$ and $d-c$, the edge $\{b,c\}$ is forced, so
$a\sim b\sim c\sim d$ all lie in one component.}
\label{fig:crossing}
\end{figure}

\begin{proof}
Let $\{(a,t_a),(b,t_b)\}$ and $\{(c,t_c),(d,t_d)\}$ be edges whose endpoints interleave,
$a<c<b<d$, as in \Cref{fig:crossing}. From the two edges,
\[
   t_b(b-a)\le(t_a\vee t_b)(b-a)\le\beta,\qquad t_c(d-c)\le(t_c\vee t_d)(d-c)\le\beta .
\]
Since $0<b-c<b-a$ and $0<b-c<d-c$,
\[
   t_b(b-c)<t_b(b-a)\le\beta,\qquad t_c(b-c)<t_c(d-c)\le\beta,
\]
so $(t_b\vee t_c)(b-c)\le\beta$ and $\{(b,t_b),(c,t_c)\}$ is an edge. The chain
$(a,t_a)\sim(b,t_b)\sim(c,t_c)\sim(d,t_d)$ puts all four endpoints in one component.
\end{proof}

The two propositions together come close to producing an infinite component.
Almost surely infinitely many edges cross the origin, and their spans are
unbounded. Any two such spans overlap, and if the edges share an endpoint or
interleave, all four endpoints lie in one component; a sequence of
interleaving edges with unbounded spans would therefore place the origin in
an infinite component. Nested pairs are the only exception, and a nested pair
still merges if any edge joins the two arches. What remains are nested pairs
with no edge between the arches; adding a balance condition on the overhangs
gives the \emph{rainbows} of \Cref{sec:rainbows}, and \Cref{thm:continuum}
says that for $\lambda\beta<1$ these are frequent enough to confine the
component of every vertex, at infinitely many scales.

\begin{remark}
\Cref{prop:crossing} is not invoked in the proofs of the main theorems; in
\Cref{sec:super} it is explicitly dispensable, see the remark following
\Cref{lem:link}. It does dictate the form of the one-sided gap problem of
\Cref{sec:reduction}: an edge from an overhang into the interior $(c,d)$
collapses onto the inner arch, so gap events restricted to the overhangs
alone cannot certify disconnection, and the marked model of
\Cref{def:marked} must carry the inner span and its boundary data.
The computation applies verbatim in the simplified model of
\Cref{sec:simplified}, where it gives $b-c\le R_b\wedge R_c$, i.e.\
$\{b,c\}\in E$.
\end{remark}

\subsection{Rainbows}\label{sec:rainbows}

A point $(x,t)$ connects to $(y,s)$ iff $(t\vee s)\abs{x-y}\le\beta$, i.e.\ iff
$\abs{x-y}\le\min(\beta/t,\beta/s)$, so $(x,t)$ acts as a vertex of radius
$\beta/t$ under the symmetric $\min$-rule. Since $t$ is uniform on $(0,1)$,
$\P(\beta/t>r)=\P(t<\beta/r)=\beta/r$ for $r\ge\beta$: the per-vertex radius is
Pareto with scale $\beta$. We write $R_x$ for the radius of a vertex $x$ and
work with this formulation from here on.

\begin{mydef}[Rainbow]\label{def:rainbow}
A \emph{rainbow} is a pair of edges $\{a,b\}$, $\{c,d\}$ with $a<c<d<b$ whose
two diagonals $\{a,c\}$ and $\{d,b\}$ are absent, and whose overhangs are
\emph{balanced}:
\[
   \abs{\ell-r}\ \le\ m,\qquad\text{where }
   \ell:=c-a,\quad m:=d-c,\quad r:=b-d
\]
are the left overhang, the inner span, and the right overhang. We call
$\{a,b\}$ the outer arch and $\{c,d\}$ the inner arch (\Cref{fig:rainbow}).

The definition uses only positions, radii and the $\min$-rule, so it applies
verbatim to the integer model of \Cref{sec:simplified} below. Since the
outer arch forces $R_a,R_b\ge b-a$, and $b-a$ exceeds both $\ell$ and $r$,
the two absent diagonals constrain only the inner radii:
\[
   R_a\ge b-a,\qquad R_b\ge b-a,\qquad R_c\in[m,\ell),\qquad R_d\in[m,r) ;
\]
in particular $\ell,r>m$. Balance says $\ell\le m+r$ and $r\le m+\ell$, so
these windows give
\[
   R_c<\ell\le m+r=b-c,\qquad R_d<r\le m+\ell=d-a :
\]
neither inner endpoint reaches the far outer endpoint, that is, the long
diagonals $\{c,b\}$ and $\{a,d\}$ are absent as well.
\end{mydef}

\begin{figure}[ht]\centering
\begin{tikzpicture}[xscale=1.0]
  \draw[zline] (-0.6,0) -- (9.3,0);
  \foreach \x/\lab in {0.5/a, 3.2/c, 5.2/d, 8.5/b}
     {\fill (\x,0) circle (1.5pt); \node[below=2pt] at (\x,0) {$\lab$};}
  \draw[outerarch] (0.5,0) to[bend left=48] (8.5,0);
  \draw[innerarch] (3.2,0) to[bend left=48] (5.2,0);
  \node[blue!65!black] at (4.5,2.15) {outer arch $\{a,b\}$};
  \node[teal!75!black] at (4.2,0.95) {inner arch $\{c,d\}$};
  \draw[brc] (0.5,-0.55) -- (3.2,-0.55) node[midway,below=4pt]{$\ell=c-a$};
  \draw[brc] (3.2,-0.55) -- (5.2,-0.55) node[midway,below=4pt]{$m=d-c$};
  \draw[brc] (5.2,-0.55) -- (8.5,-0.55) node[midway,below=4pt]{$r=b-d$};
  \node[align=left] at (4.5,-1.95)
     {\footnotesize $R_a,R_b\ge b-a;\quad R_c\in[m,\ell);\quad R_d\in[m,r);\qquad
      \ell>m,\ r>m;\qquad \abs{\ell-r}\le m$};
\end{tikzpicture}
\caption{A rainbow: outer arch $\{a,b\}$ over a nested inner
arch $\{c,d\}$, with left overhang $\ell$, inner span $m$, right overhang $r$. The non-edges
$\{a,c\},\{d,b\}$ cap the inner radii ($R_c<\ell$, $R_d<r$), forcing $\ell,r>m$; balance
$\abs{\ell-r}\le m$ caps them further, by $R_c<m+r$ and $R_d<m+\ell$, so that the long
diagonals $\{c,b\}$ and $\{a,d\}$ are absent too.}
\label{fig:rainbow}
\end{figure}

\section{The simplified model}\label{sec:simplified}
To avoid lower order random effects, we work in this section with the integer
counterpart of the radius picture of \Cref{sec:rainbows}, which we call the
simplified model. Let the vertex set $V$ be $\Z$,
where each vertex $i\in\Z$ carries an independent Pareto radius $R_i$ with shape
$1$ and scale $x_m\in(0,1)$ (playing the role of $\beta$),
\[
    \P(R_i>x)=\begin{cases}
    1,&x<x_m,\\
    \frac{x_m}{x},&x\geq x_m.
    \end{cases}
\]
Then, in analogy to the original model, an edge exists between two vertices $i$ and $j$, if and only if $|i-j|\leq R_i\wedge R_j$ and we write
\[
p_{ij}:=\P(|i-j|\leq R_i\wedge R_j).
\]
Due to the assumption that $x_m<1$, the probability that there exists an edge between a vertex $i$ and its neighbour $i+1$ is strictly less than 1.
As noted in \Cref{sec:intro}, this is the graph $G_{p,\delta}$ of Yukich
\cite{Yukich2006} on $\Z$ with $p=1$ and $\delta=x_m$. He takes $\delta=1$,
which for the same reason forces $R_i\ge1$ at every $i$ and leaves nothing to
prove about connectivity.

\Cref{thm:continuum} rests on the following theorem about the simplified
model, whose proof occupies the remainder of the paper up to and including
\Cref{sec:main}.

\begin{theorem}\label{thm:main}
For every $x_m\in(0,1)$, almost surely every connected component of the
simplified model is finite. In particular, no infinite component exists.
\end{theorem}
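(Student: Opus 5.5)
The plan follows the outline of \Cref{sec:intro}. We show that, almost surely, for every vertex $v\in\Z$ there are infinitely many scales $k$ at which the component of $v$ is trapped inside a bounded interval. It suffices to do this for the origin with probability one, since $\Z$ is countable and the law is shift invariant. The trap at a given scale is a rainbow (\Cref{def:rainbow}) straddling the origin, together with a \emph{gap} in each overhang. Here a gap at $i$ means that no edge of the graph joins a vertex $\le i$ to a vertex $>i$.

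\textbf{Step 1: rainbows at every scale.} As in \Cref{prop:edges-io}, take dyadic blocks $\cL_k,\cR_k$ for the outer endpoints, and place the inner arch in an intermediate block of comparable size, chosen so that the balance condition $\abs{\ell-r}\le m$ holds automatically. The outer arch needs $R_a,R_b\ge b-a\asymp 2^k$. The inner arch needs $R_c\in[m,\ell)$ and $R_d\in[m,r)$; each window has probability of order one, since $\P(R\in[m,\ell))=x_m(1/m-1/\ell)$. Counting the number of such configurations gives a first moment of order one. The shared-endpoint terms are handled exactly as in \Cref{prop:edges-io}, and Paley--Zygmund then gives a rainbow at scale $k$ with probability at least $p_0>0$, uniformly in $k$.

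\textbf{Step 2: reduction to two one-sided problems.} Let the rainbow be given. The arches are disconnected unless some path leaves them through the overhangs. Since the long diagonals are absent, the question depends only on radii inside $[a,b]$, because every edge leaving $[a,b]$ must be shorter than the outer radii permit (the content of \Cref{lem:window}). We impose the containment event $B$, namely $R_v<\min(v-a,b-v)$ for $v\in(c,d)$. Its probability is a product $\prod_v(1-x_m/\min(\cdot))$, which is bounded below uniformly because $\sum 1/\mathrm{dist}$ over $(c,d)$ is $O(1)$ when $m\ll\ell,r$, or at least $O(1)$ with an appropriate choice of blocks. On $B$, disconnection follows from the events $\mathcal D^L$, $\mathcal D^R$ and $\mathcal N$. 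Each of these is implied by a gap in the corresponding overhang, and by \Cref{prop:crossing} the gap must be one that also sees the inner span (this is why the marked radii $R^\sharp$ are needed).

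\textbf{Step 3: gaps via certificates (main obstacle).} The certificate $\mathcal C_i$ is the event $\{R_j<i+1-j,\ j\le i\}\cap\{R_{i+1}<1\}$, with the right-hand constraints $R_k<k$. It is a product event, with probability $\asymp\prod_{j}(1-x_m/j)\asymp n^{-x_m}$ on a window of length $n$. Summing over positions in the overhang gives an expected number of certificates of order $n^{1-x_m}\to\infty$. A second-moment computation over pairs of positions $i<i'$ uses only exact products, and should give $\E[N^2]\le C(\E N)^2$, so that the overhang contains a gap with probability bounded below uniformly in the scale. This decorrelation estimate is the crux: the products for $i$ and $i'$ share vertices, and one must show that the conditional cost of the second certificate given the first is comparable to its unconditional cost. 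This uses $x_m<1$ decisively. I would first try to bound the ratio $\P(\mathcal C_i\cap\mathcal C_{i'})/(\P(\mathcal C_i)\P(\mathcal C_{i'}))$ by $C(1+(i'-i)^{-x_m}\cdots)$. The two overhangs, and $B$, involve disjoint vertex sets given the rainbow, so FKG or independence combines them.

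\textbf{Step 4: infinitely many scales.} Given the trap, a confinement argument (as in \Cref{lem:confine}) shows that every vertex between the two gaps, in particular the origin, has a finite component. The events at scales $k$ and $k'$ with $\abs{k-k'}$ large are almost independent, because they depend on radii in essentially disjoint blocks, up to a small correlation coming from the right-hand cap constraints. Kochen--Stone therefore gives a positive probability of infinitely many successes. The event of infinitely many successes is a tail event of the independent radii, up to finite modifications, so the Kolmogorov zero--one law upgrades this to probability one, which proves \Cref{thm:main}.
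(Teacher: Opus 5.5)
Your outline is the paper's: rainbows at separated geometric scales, the events $B$ and $\mathcal N$, cut-point certificates in the marked model controlled by exact-product second moments, \Cref{lem:confine}, then Kochen--Stone and the zero--one law. The genuine gap is the junction between Step~1 and Steps~2--3, which does not work as written. Paley--Zygmund produces \emph{some} rainbow at scale $k$, and you then multiply by the probabilities of $B$, $\mathcal N$ and the two gaps ``given the rainbow''. But which quadruple is the rainbow is decided by the radii of all candidate vertices. Conditioning on the selected one imposes non-monotone constraints on the other candidates (their radii must avoid windows like $[m',\ell')$), so neither independence nor Harris applies. Worse, those other candidates sit in the overhangs, exactly where the certificate places its caps.

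The paper's remedy is $\widetilde{\cG}_k$: exactly one qualifier per block, and every other block vertex quiet ($R<g$). Conditionally on the block radii, $a,c,d,b$ are then fixed and the quiet vertices satisfy every cap automatically; this is one reason the cut is taken in the middle third, at distance $\ge2g$ from $a$ and $c$. The events $B_k,\mathcal N_k,\mathcal Q^L_k,\mathcal Q^R_k$ then become decreasing events of the independent non-block radii.

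Within your framework an equally good fix is to apply Paley--Zygmund to the number $Y_k$ of block quadruples $q$ at which the whole trap event $T_q$ (rainbow at $q$, $B_q$, $\mathcal N_q$ and both certificates) occurs. For fixed $q$ the rainbow involves only $R_a,R_b,R_c,R_d$, and the other events are decreasing in the remaining radii. Hence $\E Y_k\ge c\,\E[\#\text{rainbows}]$, while $Y_k\le\#\text{rainbows}$ bounds $\E Y_k^2$.

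Note also that with the dyadic $\cL_k,\cR_k$ of \Cref{prop:edges-io}, no placement of the inner block makes $\abs{\ell-r}\le m<\ell\wedge r$ automatic. Either the outer blocks must be narrow compared with their distance from the origin, as in \Cref{def:blocks}, or you must count only balanced quadruples.

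Three smaller points.

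First, $\mathcal N$ is not implied by the gaps. An edge from $(i,c)$ to $(i',b)$ is excluded by neither $G^L(i)$ nor $G^R(i')$, and \Cref{lem:confine} uses $\mathcal N$ precisely there. It needs its own Harris bound. The expected number of cross edges is at most $x_m^2\log(1+(\ell\wedge r)/m)$, which is bounded only under a shape constraint $\ell\wedge r\le Cm$ that your block geometry must enforce.

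Second, in Step~3 the normalised ratio is not $O(1)$. Relative to $\mathcal C^\sharp_i$, only the vertices $i+2,\dots,i'+1$ receive new caps, which gives $\P(\mathcal C^\sharp_i\cap\mathcal C^\sharp_{i'})\le C\,\P(\mathcal C^\sharp_i)\,G(i'-i-1)$. Since $\P(\mathcal C^\sharp_{i'})\asymp n^{-x_m}$, the ratio is only bounded by $C(n/(i'-i))^{x_m}$, which is large for nearby cuts. The second moment closes because $\sum_{s\le n}G(s)\asymp n^{1-x_m}\asymp\E N$; this is where $x_m<1$ enters.

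Third, in Step~4 the dependence between scales does not come from the right-hand caps, which lie in the scale's own annulus. It comes from $B_k$, which caps every vertex of $(-g_k,g_k)$. The paper writes $D_k=E_k\cap B_k$ with $E_k$ measurable on the annulus, and obtains $\P(D_j\cap D_k)\le\P(D_j)\P(E_k)\le(9/4)^{2x_m}\P(D_j)\P(D_k)$. In fact no decoupling is needed at all: once $\P(D_k)\ge\delta$ uniformly, $\P(D_k\ \text{i.o.})\ge\limsup_k\P(D_k)\ge\delta$, and the zero--one law finishes.
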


The route is the one sketched in \Cref{sec:intro}, and its steps are
\Cref{prop:rainbows-io} (rainbows at infinitely many independent scales),
\Cref{lem:window} and \Cref{lem:contain} (reduction to one gap problem per
overhang), \Cref{prop:cutpoints} and \Cref{lem:cutmarked} (the cut-point
certificates that solve it uniformly in the scale), and \Cref{lem:confine}
with \Cref{prop:scale} (confinement, with probability bounded below per
scale). One point of the assembly is not visible from the sketch: plain
Borel--Cantelli does not apply at the last step, because the containment
events reach across scales, so \Cref{prop:scalesio} runs through
Kochen--Stone and a zero--one law instead.

The basic disconnection notion is the following: if the event
\begin{equation}\label{eq:gap}
    \{\text{gap at }i\}:=\{\not\exists j\leq i\text{ s.t. }\exists k>i\text{ with } |k-j|\leq R_j\wedge R_k\}
\end{equation}
occurs, then any vertex to the left of $i$ cannot belong to the same connected component as any vertex to the right of $i$.
No site is a gap, however: infinitely many edges cross every site almost
surely (\Cref{rem:edges-io-Z}), so the event above is null for each $i$, and
with it the naive route to disconnection. Disconnection is therefore sought
inside a rainbow, where the outer arch is the only edge in $[a,b]$ allowed to cross the
cut.

In particular, if $\{a,b\},\{c,d\}$ form a rainbow (\Cref{def:rainbow}) and there
exist cut points $i\in(a,c)$ and $i'\in(d,b)$ such that no edge other than
the outer arch crosses either of them, then the inner and the outer arch
lie in disjoint connected components of the graph restricted to $[a,b]$
(and hence, by \Cref{lem:window} below, of the full graph): any path
between the arches inside the window would have to cross $i$ or $i'$ by an
edge, and the only such edge is the arch itself, which does not meet the
inner component. As the remark following \Cref{prop:crossing}
explains, such certificates cannot be confined to the overhangs; the
translation to the $(0,n)$ gap problem below therefore carries the
boundary vertices and the inner span with it.

\subsection{Formation of rainbows}\label{sec:formation}

Both \Cref{prop:scale} and \Cref{prop:scalecont} build on the following
construction.

\begin{mydef}[Scale blocks]\label{def:blocks}
For $k\ge1$ set $g=g_k:=8^{2k-1}$ and let
\[
   \mathsf a_k=(-9g,-8g],\quad \mathsf c_k=(-2g,-g],\quad
   \mathsf d_k=[g,2g),\quad \mathsf b_k=[8g,9g)
\]
be the four \emph{scale-$k$ blocks}, each of size $g$ (\Cref{fig:scales}).
A vertex is \emph{qualifying} if it lies in $\mathsf c_k$ or $\mathsf d_k$
with radius in $[4g,5g]$, or in $\mathsf a_k$ or $\mathsf b_k$ with radius
at least $32g$; the \emph{formation event} at scale $k$ is
\begin{align*}
   \cG_k:={}&\{\exists c\in\mathsf c_k:R_c\in[4g,5g]\}\cap\{\exists d\in\mathsf d_k:R_d\in[4g,5g]\}\\
          &\cap\{\exists a\in\mathsf a_k:R_a\ge32g\}\cap\{\exists b\in\mathsf b_k:R_b\ge32g\}.
\end{align*}
All four thresholds exceed $x_m$, since $x_m<1\le g$, so the Pareto tail
applies to each.
\end{mydef}

The proposition below is not itself invoked in the proof of
\Cref{thm:main}: \Cref{prop:scale} re-runs the construction of
\Cref{def:blocks} in a
strengthened form. We nevertheless state this formulation separately: it isolates
the unconditional half of the problem (rainbows \emph{form} at infinitely
many scales, regardless of what happens inside them) from the conditional
half, the disconnection of a formed rainbow, which occupies
\Cref{sec:reduction,sec:cutpoints,sec:main}, and its proof is the skeleton
on which \Cref{prop:scale} builds.

\begin{prop}\label{prop:rainbows-io}
Almost surely there are infinitely many \emph{origin-crossing} rainbows: infinitely many
rainbows (\Cref{def:rainbow}) whose inner arch $\{c,d\}$ satisfies $c<0<d$. Moreover the
probability that such a rainbow forms at a given geometric scale is bounded below uniformly in
the scale, so it does not decay as the scale grows.
\end{prop}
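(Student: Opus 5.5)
The plan is to show that the formation event $\cG_k$ of \Cref{def:blocks} deterministically produces an origin-crossing rainbow. Its probability is then bounded below uniformly in $k$, and the events $\cG_k$ are independent across $k$, so the second Borel--Cantelli lemma finishes the proof.

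\emph{Step 1 (deterministic inclusion).} On $\cG_k$, pick qualifying vertices $a\in\mathsf a_k$, $c\in\mathsf c_k$, $d\in\mathsf d_k$ and $b\in\mathsf b_k$. The block positions give $c<0<d$ and the following span bounds:
\[
m=d-c\in(2g,4g),\qquad \ell=c-a\in(6g,8g),\qquad r=b-d\in(6g,8g),\qquad b-a<18g .
\]
We then check the rainbow conditions one by one.
\begin{enumerate}[(i)]
\item The inner arch is present, since $m<4g\le R_c\wedge R_d$.
\item The outer arch is present, since $b-a<18g<32g\le R_a\wedge R_b$.
\item The diagonal $\{a,c\}$ is absent, since $R_c\le5g<6g<\ell$; symmetrically $\{d,b\}$ is absent, since $R_d\le 5g<r$.
\item Balance holds, since $\abs{\ell-r}<2g<m$.
\end{enumerate}
Hence $\cG_k$ is contained in the event that an origin-crossing rainbow has all four endpoints in the scale-$k$ blocks.

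\emph{Step 2 (uniform lower bound).} The four blocks are disjoint and the radii are independent, so $\P(\cG_k)$ factorises over the blocks. For one vertex,
\[
\P\big(R\in[4g,5g]\big)=x_m\Big(\frac1{4g}-\frac1{5g}\Big)=\frac{x_m}{20g},\qquad \P(R\ge32g)=\frac{x_m}{32g}.
\]
Each block has $g$ vertices, so
\[
\P(\cG_k)\ \ge\ \big(1-(1-\tfrac{x_m}{20g})^{g}\big)^2\big(1-(1-\tfrac{x_m}{32g})^{g}\big)^2\ \ge\ \big(1-e^{-x_m/20}\big)^2\big(1-e^{-x_m/32}\big)^2=:p_0>0 .
\]
This bound is uniform in $k$, which gives the second assertion of the proposition.

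\emph{Step 3 (independence across scales).} All scale-$k$ blocks lie in $(-9g_k,-g_k]\cup[g_k,9g_k)$. Since $g_{k+1}=64g_k>9g_k$, blocks belonging to different scales are disjoint. Therefore the events $\cG_k$ depend on disjoint sets of independent radii and are mutually independent.

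Since $\sum_k\P(\cG_k)=\infty$, the second Borel--Cantelli lemma shows that $\cG_k$ occurs for infinitely many $k$ almost surely. The rainbows obtained at different scales use disjoint vertex sets, so they are distinct, and infinitely many origin-crossing rainbows exist almost surely.

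No step is a real obstacle. The only care needed is the arithmetic in Step 1, namely the interval bounds $\ell,r>5g$ and $\abs{\ell-r}<m$ coming from the block geometry, and this arithmetic is exactly what the choice of the constants in \Cref{def:blocks} is designed for.
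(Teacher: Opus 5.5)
Your proof is correct and is essentially the paper's argument: you show that the formation event $\mathcal{G}_k$ deterministically produces an origin-crossing rainbow, bound $\P(\mathcal{G}_k)$ below by the factorised block estimate $(1-e^{-x_m/20})^2(1-e^{-x_m/32})^2$, use disjointness of the scale-$k$ annuli for independence, and conclude with the second Borel--Cantelli lemma. One harmless slip: on $\mathbb{Z}$ the positions $c=-g$, $d=g$ are admissible, so $m\in[2g,4g)$ rather than $(2g,4g)$, and the balance check should read $\abs{\ell-r}<2g\le m$ instead of $\abs{\ell-r}<2g<m$.
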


\begin{figure}[ht]\centering
\begin{tikzpicture}[xscale=0.85]
  \draw[zline] (-8.6,0) -- (8.6,0);
  \fill (0,0) circle (1.3pt); \node[above=2pt] at (0,0) {\footnotesize $0$};
  \fill[blue!65!black] (-6.4,0) circle (1.4pt); \node[below=2pt] at (-6.4,0) {\footnotesize $a$};
  \fill[teal!75!black] (-1.7,0) circle (1.4pt); \node[below=2pt] at (-1.7,0) {\footnotesize $c$};
  \fill[teal!75!black] (1.7,0)  circle (1.4pt); \node[below=2pt] at (1.7,0)  {\footnotesize $d$};
  \fill[blue!65!black] (6.4,0)  circle (1.4pt); \node[below=2pt] at (6.4,0)  {\footnotesize $b$};
  \draw[outerarch] (-6.4,0) to[bend left=42] (6.4,0);
  \draw[innerarch] (-1.7,0) to[bend left=52] (1.7,0);
  \draw[brc] (-7.0,-0.45) -- (-5.8,-0.45) node[midway,below=4pt]{\footnotesize $\mathsf a_k=(-9g,-8g]$};
  \draw[brc] (-2.3,-0.45) -- (-1.1,-0.45) node[midway,below=4pt]{\footnotesize $\mathsf c_k=(-2g,-g]$};
  \draw[brc] (1.1,-0.45)  -- (2.3,-0.45)  node[midway,below=4pt]{\footnotesize $\mathsf d_k=[g,2g)$};
  \draw[brc] (5.8,-0.45)  -- (7.0,-0.45)  node[midway,below=4pt]{\footnotesize $\mathsf b_k=[8g,9g)$};
  \node[align=center] at (0,-1.7)
     {\footnotesize $R_c,R_d\in[4g,5g],\quad R_a,R_b\ge 32g,\qquad g=g_k:=8^{2k-1}$};
\end{tikzpicture}
\caption{Four disjoint
blocks at geometric scale $g=8^{2k-1}$, each of size $g$, host the candidate vertices $a,c,d,b$. Moderate inner
radii ($[4g,5g]$) force the inner arch $\{c,d\}$ and forbid the short diagonals $\{a,c\}$, $\{d,b\}$;
large outer radii ($\ge 32g$) force the outer arch $\{a,b\}$. Confining the outer
vertices to blocks of width $g$ at distance $8g$ makes the overhangs comparable,
so the rainbow produced is balanced. Distinct scales use disjoint
vertices, so the scale events are independent.}
\label{fig:scales}
\end{figure}

\begin{proof}
For a rainbow at $(a,c,d,b)$ write
$\ell=c-a$, $m=d-c$, $r=b-d$, $L=\ell+m+r=b-a$. By \Cref{def:rainbow} the four radius conditions
involve the four distinct radii $R_a,R_b,R_c,R_d$, hence are independent, and
\begin{equation}\label{eq:rb-prob}
   \P\big(\text{rainbow at }(a,c,d,b)\big)
   =\frac{x_m}{L}\cdot\frac{x_m}{L}\cdot\frac{x_m(\ell-m)}{m\ell}\cdot\frac{x_m(r-m)}{mr}
   =\frac{x_m^4(\ell-m)(r-m)}{L^2 m^2 \ell r}.
\end{equation}

As motivation, sum \eqref{eq:rb-prob} over origin-crossing quadruples and keep
only $d-c=m$ with $\ell,r\in[2m,3m]$; such quadruples are balanced, since
$\abs{\ell-r}\le3m-2m=m$. There $\ell-m\ge m$, $r-m\ge m$, $L\le 7m$ and $\ell r\le 9m^2$,
so each term is $\ge x_m^4 m^2/(49m^2\cdot m^2\cdot 9m^2)=x_m^4/(441 m^4)$; the region has
$\ge m-1$ inner positions and $\ge m$ integer choices of each of $\ell,r$, i.e.\ $\ge m^3/2$ terms.
Hence
\[
   \E\big[\#\text{rainbows}\big]\ \ge\ \sum_{m\ge2}\frac{m^3}{2}\cdot\frac{x_m^4}{441 m^4}
   \ =\ \frac{x_m^4}{882}\sum_{m\ge2}\frac1m\ =\ \infty .
\]
As for the origin-crossing edges, this is a harmonic divergence, now in the inner length $m$, and
does not on its own force infinitely many rainbows.

We now construct events at independent scales, using the blocks and the
formation event $\cG_k$ of \Cref{def:blocks}.

Pick qualifying $a,c,d,b$. Since $\abs a,\abs b\in[8g,9g)$
while $\abs c,\abs d\in[g,2g)$,
\[
   \ell=\abs a-\abs c\in(6g,8g),\qquad
   r=\abs b-\abs d\in(6g,8g),\qquad
   m=\abs c+\abs d\in[2g,4g),
\]
and
\[
   a\le-8g<-2g<c\le-g<0<g\le d<2g<8g\le b,
\]
so $a<c<0<d<b$. The remaining conditions of \Cref{def:rainbow} hold:
\begin{itemize}
  \item $\{c,d\}\in E$: $ d-c=m<4g\le R_c\wedge R_d$;
  \item $\{a,b\}\in E$: $ b-a=|a|+|b|<18g\le32g\le R_a\wedge R_b$;
  \item $\{a,c\}\notin E$: $ c-a=|a|-|c|\ge 8g-2g=6g>5g\ge R_c\ge R_a\wedge R_c$ (the value of
        $R_a$ is irrelevant: $R_c$ alone cannot reach $a$);
  \item $\{d,b\}\notin E$: $ b-d=|b|-|d|\ge 6g>5g\ge R_d\ge R_b\wedge R_d$;
  \item balance: $\abs{\ell-r}<8g-6g=2g\le m$.
\end{itemize}
The inner radii land in the windows of \Cref{def:rainbow}: $R_c\in[4g,5g]\subseteq[m,\ell)$ since
$m<4g\le R_c$ and $R_c\le 5g<6g\le\ell$, and likewise $R_d\in[m,r)$. The same
bounds give the sharper proportions $\ell,r<8g\le4m$, used in
\Cref{sec:reduction}.

It remains to prove a uniform lower bound. The four events defining $\cG_k$ involve disjoint vertex sets, so
$\cG_k$ factorises. With per-vertex probabilities $x_m\big(\tfrac1{4g}-\tfrac1{5g}\big)=\tfrac{x_m}{20g}$
(inner blocks) and $\tfrac{x_m}{32g}$ (outer blocks), each block holding $g$
vertices, and using $(1-u)^n\le e^{-nu}$,
\begin{align*}
   \P(\exists c\in\mathsf c_k:R_c\in[4g,5g])&=1-\Big(1-\tfrac{x_m}{20g}\Big)^{g}\ge 1-e^{-x_m/20},\\
   \P(\exists a\in\mathsf a_k:R_a\ge32g)&=1-\Big(1-\tfrac{x_m}{32g}\Big)^{g}\ge 1-e^{-x_m/32},
\end{align*}
and likewise for $\mathsf d_k,\mathsf b_k$. Hence
\[
   \P(\cG_k)\ \ge\ (1-e^{-x_m/20})^2 (1-e^{-x_m/32})^2\ =:\ p_0\ >\ 0
   \qquad\text{uniformly in }k.
\]
The scale $g$ cancels, as each block holds $\Theta(g)$ candidate vertices, each qualifying with
probability $\Theta(1/g)$.

With $g_k=8^{2k-1}$, scale $k$ occupies the magnitude bands
$[8^{2k-1},2\cdot8^{2k-1})\cup[8^{2k},\tfrac98 8^{2k})$; since
$\tfrac98 8^{2k}<8^{2k+1}$ these are
disjoint across $k$, so the blocks (hence the radii on which $\cG_k$ depends) are disjoint and the
events $(\cG_k)_{k\ge1}$ are independent. As $\sum_k\P(\cG_k)=\infty$, the second Borel--Cantelli
lemma gives $\P(\cG_k\text{ i.o.})=1$. Each $\cG_k$ yields a rainbow with all four vertices in
scale-$k$ blocks, distinct across $k$; hence there are infinitely many origin-crossing rainbows
almost surely.
\end{proof}

\begin{remark}
The factorisation replaces the Paley--Zygmund step used for the origin-crossing edges: requiring
each of $a,b$ (resp.\ $c,d$) to have a sufficiently large radius individually makes the arch edge
automatic, so the existence of an arch reduces to two independent single-vertex events.
\end{remark}

\begin{remark}
The balance condition on rainbows dictates the widths in \Cref{def:blocks}. Outer blocks of
width $8g$ rather than $g$ would still produce nested arches with absent
diagonals, but the overhangs could then differ by more than the inner span,
and the reduction below would not apply. Narrowing them costs only a
constant: the outer-block probability drops from $1-e^{-x_m/4}$ to
$1-e^{-x_m/32}$, still uniform in $k$.
\end{remark}

\subsection{The window and the reduction}\label{sec:reduction}

The outer arch reaches across the whole span, so a vertex can leave $[a,b]$ only after attaching
to the outer arch from inside. Hence the connection question is decided inside $[a,b]$.

\begin{lemma}\label{lem:window}
On the rainbow event, whether the inner arch $\{c,d\}$ and outer arch $\{a,b\}$ lie in the same
component of the graph on $\Z$ depends only on $\{R_v: a\le v\le b\}$, and equals the same event
in the restriction of the graph to $[a,b]$.
\end{lemma}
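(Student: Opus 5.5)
The plan is to show that any path in the full graph joining the inner arch to the outer arch can be shortened to one that stays in $[a,b]$. Conversely, a path inside $[a,b]$ is a path in the full graph. The equivalence of the two connection events then follows at once. Measurability with respect to $\{R_v:a\le v\le b\}$ also follows, because the edge set of the restriction to $[a,b]$ is determined by those radii through the $\min$-rule.

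The key step is a first-exit argument. Take a path $v_0,v_1,\dots,v_n$ in the graph on $\Z$ with $v_0\in\{c,d\}$ and $v_n\in\{a,b\}$; it suffices to handle such endpoints, since $c\sim d$ and $a\sim b$ by the arch edges. Let $j$ be the first index with $v_j\notin[a,b]$. If no such index exists, the path already lies in $[a,b]$ and we are done. Otherwise $v_{j-1}\in[a,b]$, and by symmetry we may take $v_j>b$.

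If $v_{j-1}=b$, then $v_0,\dots,v_{j-1}$ is a path inside $[a,b]$ from the inner arch to $b$. If $v_{j-1}\in[a,b)$, then $v_{j-1}<b<v_j$. The edge $\{v_{j-1},v_j\}$ and the outer arch $\{a,b\}$ then either share the endpoint $a$ or interleave, in the sense $a<v_{j-1}<b<v_j$.

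In the interleaving case, \Cref{prop:crossing}, whose remark states that it applies verbatim in the simplified model, gives $\{v_{j-1},b\}\in E$. One can also see this directly: $b-v_{j-1}<v_j-v_{j-1}\le R_{v_{j-1}}$ and $b-v_{j-1}<b-a\le R_b$. Hence $v_0,\dots,v_{j-1},b$ is a path inside $[a,b]$ reaching the outer arch. In the case $v_{j-1}=a$, the prefix $v_0,\dots,v_{j-1}$ already reaches $a$ within $[a,b]$. The case $v_j<a$ is the mirror image.

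In every case we obtain a path within $[a,b]$ from the inner arch to the outer arch, which proves the equivalence. I do not expect a real obstacle here. The only care needed is the degenerate cases where the exit vertex is $a$ or $b$ itself, and noting that the rainbow event, which supplies the arch edges, is itself determined by $R_a,R_b,R_c,R_d$. The "equals the same event" clause is then just the statement that the two connection events coincide pointwise on the rainbow event.
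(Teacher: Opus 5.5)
Your proof is correct and is essentially the paper's argument: take the first vertex of the path whose next edge leaves $[a,b]$, and use its radius together with $R_a,R_b\ge b-a$ to attach it by an edge inside the window to an outer endpoint. The only difference is that you attach it to the endpoint on the exit side (the interleaving observation of \Cref{prop:crossing}), while the paper attaches it to the nearer endpoint via $R_u>\min(u-a,b-u)$; both work, and you treat the degenerate exits from $a$ or $b$ explicitly where the paper leaves them implicit.
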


\begin{figure}[ht]\centering
\begin{tikzpicture}[xscale=0.95]
  \draw[zline] (-2.9,0) -- (9.3,0);
  \foreach \x/\lab in {-2.3/w, 0.5/a, 3.4/u, 8.5/b}
     {\fill (\x,0) circle (1.5pt); \node[below=2pt] at (\x,0) {$\lab$};}
  \fill[gray, fill opacity=0.15] (0.5,-0.45) rectangle (8.5,2);
  \draw[cut] (0.5,-0.45) -- (0.5,2);
  \draw[cut] (8.5,-0.45) -- (8.5,2);
  \node[gray!60!black] at (6.9,1.8) {window $[a,b]$};
  \draw[outerarch] (0.5,0) to[bend left=44] (8.5,0);
  \draw[escape]    (3.4,0) to[bend left=42] (-2.3,0);
  \draw[forced]    (0.5,0) to[bend left=55] (3.4,0);
  \node[escape, align=center] at (0.2,-1.5)
     {\footnotesize edge leaving $[a,b]$:\\[-1pt]$R_u\ge|u-w|>u-a$};
  \node[red!75!black, fill=white, inner sep=1pt] at (3.4,0.8) {\footnotesize forced $\{a,u\}$};
\end{tikzpicture}
\caption{If a vertex $u$ has an
edge leaving the window to some $w\notin[a,b]$, then $R_u\ge|u-w|>\min(u-a,b-u)$, so $u$ already
attaches to an outer endpoint inside $[a,b]$ (here $\{a,u\}$). Hence the connection question is
settled by the radii in $[a,b]$.}
\label{fig:window}
\end{figure}

\begin{proof}
A path inside $[a,b]$ is a path in $\Z$, giving one inclusion. Conversely, let $P$ join the
inner arch to the outer arch in $\Z$. If $P\subseteq[a,b]$ we are done; otherwise let $u$ be the
first vertex of $P$ (from the inner arch) whose next edge $\{u,w\}$ has $w\notin[a,b]$, so the
arc of $P$ up to $u$ stays in $[a,b]$ (see \Cref{fig:window}). Since $w\notin[a,b]$,
\[
   R_u\ge\abs{u-w}>\min(u-a, b-u).
\]
As $R_a,R_b\ge b-a\ge\max(u-a,b-u)$: if $u-a\le b-u$ then $R_u\ge u-a$ and $R_a\ge u-a$, so
$\{a,u\}\in E$; otherwise $\{u,b\}\in E$. Either way $u$ is joined to an outer vertex by an edge
inside $[a,b]$, so the inner arch meets the outer arch within $[a,b]$.
\end{proof}

\paragraph{Reduction to one overhang per side.} Since $R_a,R_b\ge b-a$, a vertex
$u\in(a,b)$ is joined to an outer endpoint by an edge as soon as
$R_u\ge\min(u-a,b-u)$; call such a $u$ \emph{outer-attached}. By
\Cref{lem:window}, the two arches share a component iff the connected component
of $\{c,d\}$ in the graph restricted to the open window $(a,b)$ contains an
outer-attached vertex. On the containment event
\begin{equation}\label{eq:B}
   B:=\set{ R_v<\min(v-a, b-v)\ \text{for all }v\in(c,d) },
\end{equation}
no interior vertex is outer-attached, and in addition:
(i) the long diagonals $\{a,d\},\{c,b\}$ are deterministically absent, by
the balance inequalities of \Cref{def:rainbow};
(ii) neither $c$ nor $d$ is outer-attached, for the same reason;
(iii) an overhang vertex can only be outer-attached towards its \emph{near}
endpoint, since $u-a\le\ell\le m+r\le b-u$ for $u\in(a,c)$, and symmetrically.

This does not yet split the problem into two independent one-sided events: the
inner cluster may enter an overhang through an edge into the interior $(c,d)$
(which by \Cref{prop:crossing} collapses onto the inner arch, but is
not an edge of the overhang graph), and the two overhangs may be joined by a
direct edge. We therefore record a \emph{sufficient} certificate. Define
\begin{align}
  \mathcal D^L:=&\{a\not\sim[c,d]\ \text{by a path whose intermediate vertices lie in }(a,c)\},
\\
  \mathcal D^R:=&\{[c,d]\not\sim b\ \text{by a path whose intermediate vertices lie in }(d,b)\},
\\
  \mathcal N:=&\{\{u,w\}\notin E\ \text{for all }u\in(a,c),\ w\in(d,b)\}.
\end{align}
Then, on the rainbow event, $B\cap\mathcal N\cap\mathcal
D^L\cap\mathcal D^R$ implies that the arches lie in disjoint components (see
\Cref{fig:reduction}).
Indeed, suppose a path joins the inner arch to $\{a,b\}$ inside the window and,
say, ends at $a$; let $y$ be the vertex before $a$. If $y\in(c,d)$, then $y$ is
outer-attached, contradicting $B$; $y\in\{c,d\}$ contradicts the rainbow
(resp.\ (i)); if $y\in(d,b)$ then by (iii) $R_y\ge y-a\ge b-y$, so $\{y,b\}\in
E$ and we may instead let the path end at $b$ from within $(d,b)$; and if
$y\in(a,c)$, follow the path backwards from $y$ while it stays in $(a,c)$ --
the first vertex outside is in $[c,d]$ (contradicting $\mathcal D^L$), in
$(d,b)$ (contradicting $\mathcal N$), or does not exist because the path came
from $[c,d]$ directly (again contradicting $\mathcal D^L$). A mirrored argument applies on the right.
Conditionally on all radii of $[c,d]$ (the interior ones capped by $B$, the
endpoint windows given by the rainbow), $\mathcal D^L$ and $\mathcal D^R$
depend on disjoint sets of radii and are independent. The expected number of
cross edges is
\[
   x_m^2\!\!\sum_{u\in(a,c), w\in(d,b)}\!\!(w-u)^{-2}
   \ \le\ x_m^2\log\Big(1+\frac{\ell\wedge r}{m}\Big),
\]
by summing $(m+p+q)^{-2}$ over the distances $p,q\ge1$ to the near inner
endpoints. This is \emph{not} bounded over all rainbows: balance constrains only the
difference of the overhangs, so taking $m$ fixed and $\ell=r\to\infty$ stays
within \Cref{def:rainbow} and gives logarithmic growth. The sum is
$O(x_m^2)$ only under the additional shape constraint $\ell\wedge r\le Cm$,
which holds in particular for the rainbows produced by \Cref{def:blocks},
where $\ell,r<4m$. For such rainbows the Harris
inequality \cite{Harris1960}, together with $1-z\ge e^{-2z}$ on
$[0,\tfrac12]$, gives
$\P(\mathcal N)\ge\big(1+\tfrac{\ell\wedge r}{m}\big)^{-2x_m^2}
\ge 5^{-2x_m^2}$. The
interior event $B$ is the no-inner-vertex-reaches-beyond event; by the
vertex-containment estimate below, $\P(B)\ge(4/9)^{2x_m}$ uniformly in the
rainbow size.

\begin{figure}[ht]\centering
\begin{tikzpicture}[xscale=1.0]
  \node at (0.5,-0.3){$a$};
  \node at (3.0,-0.3){$c$};
  \node at (6.5,-0.3){$d$};
  \node at (9.0,-0.3){$b$};
  \draw[zline] (-0.6,0) -- (9.7,0);
  \draw[outerarch] (0.5,0) to[bend left=42] (9.0,0);
  \draw[innerarch] (3.0,0) to[bend left=52] (6.5,0);
  \draw[pathconn] (0.5,0) to[bend left=20] (3.0,0);
  \draw[pathconn] (6.5,0) to[bend left=20] (9.0,0);
  \node[green!45!black, fill=white, inner sep=1pt] at (1.75,-0.4) {\footnotesize $a\sim c$ in $(a,c)$};
  \node[green!45!black, fill=white, inner sep=1pt] at (7.75,-0.4) {\footnotesize $d\sim b$ in $(d,b)$};
  \fill[violet!70!black] (4.5,0) circle (1.6pt); \node[violet!70!black,below=2pt] at (4.5,0){\footnotesize $v$};
  \draw[excluded] (0.5,0) to[bend left=60] (4.5,0);
  \draw[excluded] (4.5,0) to[bend right=40] (3.0,0);
  \node[red!70!black, align=center] at (3.0,2.3)
     {\footnotesize over-the-top $a\!\sim\!v\!\sim\!c$ (requires $R_v\ge v-a$):\\[-1pt]is excluded on $B$};
\end{tikzpicture}
\caption{On the containment event $B$ the arches can
meet only through an overhang, $a\sim c$ in $(a,c)$ or $d\sim b$ in $(d,b)$ (wavy). The dashed
over-the-top route, an interior $v$ with $R_v\ge v-a$ joining $a$ to $c$ across the span (and the
long diagonal $\{a,d\}$), is what $B$ forbids. On $B$ and with no cross
edge, disconnection follows from $\mathcal D^L\cap\mathcal D^R$.}
\label{fig:reduction}
\end{figure}

\paragraph{One side, by symmetry.} The radii are i.i.d.\ and the rule
$\abs{i-j}\le R_i\wedge R_j$ is invariant under reflection, so the two events
$\mathcal D^L$ and $\mathcal D^R$ have the same law after exchanging the roles of
$\ell$ and $r$, and it suffices to analyse one of them. We treat the left side and
put $x=a$, $y=c$, so that the overhang has length $n:=\ell$ and the far overhang has
length $s:=r$; the right side is the mirror image with $n:=r$, $s:=\ell$. In either
case $\{x,y\}\notin E$.

In translating the overhang to $(0,n)$ one must retain the boundary information
imposed by the rainbow and by the containment event $B$: the outer endpoint $x=a$
carries $R_a\ge b-a$, so within the window its radius never binds ($a\sim u$ iff
$R_u\ge u-a$); the inner endpoint $y=c$ carries $R_c\in[m,\ell)$; and, by the
definition of $\mathcal D^L$, the target is not the single vertex $c$ but the whole
inner stretch $[c,d]$, whose interior radii are capped by $B$ and whose far endpoint
carries $R_d\in[m,r)$. This leads to the following one-sided model.

\begin{mydef}\label{def:marked}
Fix integers $m\ge 1$ and $n,s>m$ with $|n-s|\le m$ (the overhang lengths of a
rainbow and its inner span), and set $D_k:=\min(n+k, s+m-k)$ for
$1\le k\le m-1$ (the caps of $B$, as in \Cref{lem:contain} with $\ell=n$, $r=s$).
The \emph{marked model} on the vertex set $\{0,1,\dots,n+m\}$ has independent radii $R^\sharp_v$ for $v\in \{0,\dots,n+m\}$ with Pareto distribution of shape 1 and scale $x_m\in (0,1)$ as follows
\begin{itemize}
  \item $R^\sharp_0=\infty$ (image of the outer endpoint $a$: an edge $\{0,u\}$
        exists iff $R^\sharp_u\ge u$);
  \item $R^\sharp_u$ i.i.d.\ Pareto for $1\le u\le n-1$ (interior of the overhang);
  \item $R^\sharp_n$ Pareto conditioned on $[m,n)$ (near inner endpoint, image of $c$);
  \item $R^\sharp_{n+k}$ Pareto conditioned on $[x_m,D_k)$ for $1\le k\le m-1$
        (interior of the rainbow, capped by $B$);
  \item $R^\sharp_{n+m}$ Pareto conditioned on $[m,s)$ (far inner endpoint, image of $d$),
\end{itemize}
with the edge rule $\{u,v\}\in E$ iff $|u-v|\le R^\sharp_u\wedge R^\sharp_v$. For
$1\le i\le n-1$ the \emph{marked gap event} is
\[
   \{\text{gap}^\sharp\text{ at }i\}
   :=\{\not\exists  0\le j\le i<k\le n+m \text{ with } |k-j|\le R^\sharp_j\wedge R^\sharp_k\}.
\]
\end{mydef}
The marked model is put to work only in \Cref{lem:cutmarked}; before that,
\Cref{lem:contain} bounds the probability of the containment event $B$ that
produces the conditioned radii above, and \Cref{prop:cutpoints} solves the
gap problem in the unmarked model, in the form that \Cref{lem:cutmarked}
then perturbs.

Conditionally on the rainbow event and on $B$, the radii of $[a,d]$ are independent
with the marked laws above: the constraints $R_a\ge b-a$, $R_c\in[m,\ell)$,
$R_d\in[m,r)$ and the caps of $B$ concern disjoint coordinates, and $R_a\ge b-a$
never binds for targets in $[a,d]$ (as $d-a\le b-a$), so it may be replaced by
$R_a=\infty$. Moreover a marked gap certifies the one-sided disconnection event:
any path from $a$ to $[c,d]$ whose intermediate vertices lie in $(a,c)$ must cross
the cut at $i$ by an edge, which the marked gap forbids. Hence, conditionally on the rainbow
event intersected with $B$,
\[
   \{\text{gap}^\sharp\text{ at some }i\in[1,n-1]\}\ \subseteq\ \mathcal D^L ,
\]
and symmetrically for $\mathcal D^R$; intersecting the two sides and the
no-cross-edge event $\mathcal N$ then yields disconnection, by the reduction above.
The implication runs in one direction only; a lower bound on the
disconnection probability needs no more.

\begin{lemma}\label{lem:contain}
For a rainbow with inner length $m=d-c$ and overhangs $\ell=c-a>m$, $r=b-d>m$, the containment
event $B=\set{R_v<\min(v-a, b-v)\ \text{for all }v\in(c,d)}$ satisfies
\[
   \P(B) = \prod_{k=1}^{m-1}\Big(1-\frac{x_m}{\min(\ell+k, r+m-k)}\Big) \ge \Big(\tfrac49\Big)^{2x_m},
\]
uniformly in $\ell,r,m$ subject to $\ell,r>m$.
\end{lemma}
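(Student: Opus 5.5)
The plan is to compute $\P(B)$ exactly from independence and then bound the product by two telescoping products.

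\emph{Step 1: the exact formula.} Write each interior vertex as $v=c+k$ with $1\le k\le m-1$. Then $v-a=\ell+k$ and $b-v=r+m-k$, so the cap on $R_v$ is $D_k:=\min(\ell+k,\,r+m-k)$. The events $\{R_v<D_k\}$ concern distinct radii, so they are independent and $\P(B)$ is their product. Since $\ell,r>m\ge1$, we have $D_k\ge 2>x_m$, so the Pareto tail applies to every factor. Because the law is continuous, strict and non-strict inequalities agree, and each factor equals $1-x_m/D_k$. This gives the displayed identity; for $m=1$ the product is empty and equals $1$.

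\emph{Step 2: removing the parameter $x_m$.} For $x\in(0,1)$ and $y\in[0,1]$, Bernoulli's inequality gives $(1-y)^x\ge 1-xy$, i.e.\ $1-x_m/D\ge(1-1/D)^{x_m}$ for every $D\ge1$. Hence
\[
\P(B)\ \ge\ \Big(\prod_{k=1}^{m-1}\big(1-1/D_k\big)\Big)^{x_m}.
\]

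\emph{Step 3: splitting the minimum.} For $A,B'\ge1$ we have $1-1/\min(A,B')\ge(1-1/A)(1-1/B')$, since the right-hand side is one of the two factors times a number in $[0,1]$. Applying this with $A=\ell+k$ and $B'=r+m-k$, each resulting product telescopes:
\[
\prod_{k=1}^{m-1}\Big(1-\frac1{\ell+k}\Big)=\frac{\ell}{\ell+m-1},\qquad
\prod_{k=1}^{m-1}\Big(1-\frac1{r+m-k}\Big)=\prod_{j=r+1}^{r+m-1}\Big(1-\frac1j\Big)=\frac{r}{r+m-1}.
\]
Both ratios exceed $1/2$ because $\ell,r>m$.

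\emph{Step 4: conclusion.} Combining the steps gives $\P(B)\ge(1/4)^{x_m}$. Since $(4/9)^2=16/81<1/4$, this is at least $(4/9)^{2x_m}$, and the bound is uniform in $\ell,r,m$.

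The only delicate point is the choice of elementary inequality. The naive route bounds $-\log(1-z)\le Cz$ and $\sum_k 1/D_k\le 2\log2$. It gives $\P(B)\ge e^{-1.69x_m}$ with the best constant, while the target is $e^{-1.62x_m}$, so it just misses the stated constant. The Bernoulli step in Step 2 followed by telescoping avoids that loss.
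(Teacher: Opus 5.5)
Your proof is correct, and it takes a different route from the paper. The paper works with the sum $\sum_k 1/D_k$. Each term is non-increasing in $\ell$ and $r$, so the paper passes to the extreme configuration $\ell=r=m$, where symmetry and a Hermite--Hadamard comparison give $\sum_k 1/D_k<\log\tfrac94$. It then converts back with $1-z\ge e^{-2z}$ on $[0,\tfrac12]$, and that factor $2$ is the square in $(4/9)^{2x_m}$.

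You instead move $x_m$ into the exponent by concavity and replace the minimum by a product, after which both products telescope exactly. This avoids the extremal reduction over $\ell,r$ and the integral comparison. It also gives the stronger bound $(1/4)^{x_m}$, i.e.\ exponent $x_m\log4\approx1.39\,x_m$ against the paper's $2x_m\log\tfrac94\approx1.62\,x_m$. What the paper's route buys is that it is the same computation as in the continuum counterpart \Cref{lem:containcont}. There $\P(B)=\exp\bigl(-\beta\int_0^m\mathrm{d}s/D(s)\bigr)$ holds exactly, and the sharp value $\log\tfrac94$ of the integral yields $(4/9)^{\beta}$; splitting the minimum there would only give $(1/4)^{\beta}$.

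There are two small slips, neither affecting the argument.
\begin{enumerate}
\item In Step 2 the first form of Bernoulli's inequality is reversed. For $x\in(0,1)$, concavity of $y\mapsto(1-y)^x$ gives $(1-y)^x\le 1-xy$. The ``i.e.'' form $1-x_m/D\ge(1-1/D)^{x_m}$, which is what you actually use, is the correct one.
\item Your closing remark undersells the naive route. Whenever the product is non-empty, $m\ge2$ and $D_k\ge m+2\ge4$, so $z_k\le\tfrac14$ and one may take $C=4\log\tfrac43\approx1.151$. Together with $\sum_k 1/D_k<2\log2$ this gives $\P(B)\ge e^{-1.595\,x_m}$, which is enough for the stated constant.
\end{enumerate}
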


\begin{figure}[ht]\centering
\begin{tikzpicture}[xscale=1.0]
  \draw[zline] (-0.6,0) -- (9.7,0);
  \foreach \x/\lab in {0.5/a, 3.0/c, 6.5/d, 9.0/b}
     {\fill (\x,0) circle (1.5pt); \node[below=2pt] at (\x,0) {$\lab$};}
  \foreach \x in {3.6,4.2,4.8,5.4,6.0}{\fill[black!70] (\x,0) circle (1.1pt);}
  \draw[outerarch] (0.5,0) to[bend left=44] (9.0,0);
  \draw[innerarch] (3.0,0) to[bend left=44] (6.5,0);
  \fill[violet!70!black] (4.8,0) circle (1.7pt);
  \node[violet!70!black, above=1pt] at (4.8,0.05) {\footnotesize $v=c+k$};
  \draw[reach,{Stealth[length=1.6mm]}-{Stealth[length=1.6mm]}] (0.5,-0.55) -- (4.8,-0.55)
        node[midway,below=2pt]{\footnotesize $v-a=\ell+k$};
  \draw[reach,{Stealth[length=1.6mm]}-{Stealth[length=1.6mm]}] (4.8,-0.55) -- (9.0,-0.55)
        node[midway,below=2pt]{\footnotesize $b-v=r+m-k$};
  \node[align=center] at (4.75,-2.15)
     {\footnotesize containment $B$: every interior $v$ has $R_v<D_k=\min(\ell+k, r+m-k)$};
\end{tikzpicture}
\caption{Each interior vertex $v=c+k$ sits
at distance $\ell+k$ from $a$ and $r+m-k$ from $b$; it reaches an outer endpoint iff
$R_v\ge D_k:=\min(\ell+k,r+m-k)$. The event $B$ that no interior vertex reaches out has
$\mathbb P(B)=\prod_{k}(1-x_m/D_k)\ge(4/9)^{2x_m}$.}
\label{fig:contain}
\end{figure}

\begin{proof}
Write $D_k=\min(\ell+k, r+m-k)$, the distance from the interior vertex $c+k$ to the nearer
outer endpoint (since $(c+k)-a=\ell+k$ and $b-(c+k)=r+m-k$; see \Cref{fig:contain}). That vertex reaches an outer
endpoint iff $R_{c+k}\ge D_k$, which has  probability $x_m/D_k$ (as $D_k\ge x_m$); the interior radii are
independent, giving the product formula. If $m=1$ there are no interior vertices and $\P(B)=1$,
so assume $m\ge2$.

Each term is non-increasing in $\ell$ and in $r$, so the sum is
largest at $\ell=r=m$, where $D_k=\min(m+k,2m-k)$ is symmetric under $k\mapsto m-k$ and equals
$m+k$ for $k\le m/2$. Hence
\[
   \sum_{k=1}^{m-1}\frac1{D_k} \le 2\sum_{k=1}^{\lfloor m/2\rfloor}\frac1{m+k}.
\]
The summand $k\mapsto 1/(m+k)$ is convex, so by the Hermite--Hadamard inequality
$\frac1{m+k}\le\int_{k-1/2}^{k+1/2}\frac{\mathrm dx}{m+x}$, whence with
$\lfloor m/2\rfloor+\tfrac12\le\tfrac{m+1}{2}$,
\[
   2\sum_{k=1}^{\lfloor m/2\rfloor}\frac1{m+k}
    \le 2\int_{1/2}^{\lfloor m/2\rfloor+1/2}\frac{\mathrm dx}{m+x}
    \le 2\log\frac{(3m+1)/2}{m+1/2}
    = 2\log\frac{3m+1}{2m+1} < 2\log\tfrac32 = \log\tfrac94,
\]
where the last inequality follows because $2(3m+1)<3(2m+1)$.

For $1\le k\le m-1$, $D_k\ge\min(\ell+1,r+1)\ge m+2\ge4$, so
$z_k:=x_m/D_k\le x_m/4<\tfrac12$. On $[0,\tfrac12]$ one has $1-z\ge\euler^{-2z}$ (with equality at
$z=0$ and $\tfrac{\mathrm d}{\mathrm dz}\log(1-z)=-\tfrac1{1-z}\ge-2$). Therefore
\[
   \P(B)=\prod_{k=1}^{m-1}(1-z_k) \ge \exp\!\Big(-2\sum_{k=1}^{m-1}z_k\Big)
         \ge \exp\!\big(-2x_m\log\tfrac94\big)=\Big(\tfrac49\Big)^{2x_m}. \qedhere
\]
\end{proof}

We now focus on the gap events. Using the translation invariance of the
model and to simplify notation, we work on $(0,n)$: all radii of $[0,n]$ are
i.i.d.\ Pareto and $\{\text{gap at }i\}|_{(0,n)}$ denotes the event
\eqref{eq:gap} for the graph restricted to $[0,n]$, so that only pairs
$0\le j\le i<k\le n$ are considered. We write
$c(x_m)$, $C(x_m)$ for positive finite constants depending only on $x_m$
that may change from occurrence to occurrence. 

\subsection{Existence of gaps: cut points}\label{sec:cutpoints}

We first treat the unmarked model, where the certificate probabilities
take their simplest form: it is the template that both the marked variant
(\Cref{lem:cutmarked}, whose proof \Cref{prop:scale}(ii) transfers to the
block-decorated overhangs) and the continuum version
(\Cref{prop:cutcont}) perturb, and the comparison with the classical
cut-point construction (\Cref{rem:cutpoints}) is most transparent in this
form.

The existence of a gap in $(0,n)$ can be established without any control of
the total number of gap positions: it suffices to count gaps certified by
an event whose probabilities factorise exactly. For
$0\le i\le n-2$ define the \emph{cut-point certificate}
\begin{equation}
   \mathcal C_i:=\{R_j<i+1-j\ \text{for all }0\le j\le i\}\cap\{R_{i+1}<1\}.
\end{equation}
On $\mathcal C_i$ no vertex $j\le i$ can reach past $i+1$, and the vertex
$i+1$ connects to nothing, so no pair $(j,k)$ with $j\le i+1<k$ connects:
$\mathcal C_i\subseteq\{\text{gap at }i+1\}|_{(0,n)}$. The advantage of considering this event is that $\mathcal C_i$ is a \emph{cap event}: an
event of the form $\mathcal E_c:=\bigcap_v\{R_v<c_v\}$ determined by a
deterministic vector of \emph{caps} $c=(c_v)_v\in(0,\infty]^V$, one constraint
per vertex ($c_v=\infty$ meaning no constraint). Two features are used throughout.
Since the radii are independent and $\P(R<t)=1-x_m/t$ for $t\ge x_m$, the
probability of a cap event with all caps $\ge x_m$ is an exact product,
$\P(\mathcal E_c)=\prod_v(1-x_m/c_v)$ (every cap occurring below is $\ge1$);
and cap events are closed under
intersection, the caps combining vertex by vertex,
\[
   \mathcal E_c\cap\mathcal E_{c'}=\mathcal E_{c\wedge c'},
   \qquad (c\wedge c')_v=\min(c_v,c'_v),
\]
so joint probabilities are exact products as well. Explicitly $\mathcal C_i$
has caps $c^{(i)}_j=i+1-j$ for $0\le j\le i$, $c^{(i)}_{i+1}=1$ and
$c^{(i)}_v=\infty$ for $v\ge i+2$. With
\[
   G(k):=\prod_{d=1}^{k}\Big(1-\frac{x_m}{d}\Big)
   =\frac{\Gamma(k+1-x_m)}{\Gamma(1-x_m) k!}
   \ \sim\ \frac{k^{-x_m}}{\Gamma(1-x_m)},
\]
we have $\P(\mathcal C_i)=(1-x_m) G(i+1)$. Additionally, for $i<i'$ with $s:=i'-i$,
\begin{equation}\label{eq:cut_joint}
  \P(\mathcal C_i\cap\mathcal C_{i'})=\P(\mathcal C_i) (1-x_m) G(s-1),
\end{equation}
since the caps of $\mathcal C_i\cap\mathcal C_{i'}$ are the vertex-wise minima
of $c^{(i)}$ and $c^{(i')}$, namely $i+1-j$ for $j\le i$, then $1$ at $i+1$,
$i'+1-j$ for $i+2\le j\le i'$, and $1$ at $i'+1$. The first two groups
reproduce $\P(\mathcal C_i)$; in the third, the substitution $d=i'+1-j$ turns
$\prod_{j=i+2}^{i'}(1-x_m/(i'+1-j))$ into $\prod_{d=1}^{s-1}(1-x_m/d)=G(s-1)$,
and the fourth contributes $1-x_m$.

\begin{prop}\label{prop:cutpoints}
Let $N_{\mathcal C}:=\#\{0\le i\le n-2:\ \mathcal C_i\ \text{occurs}\}$. Then
\[
   \E[N_{\mathcal C}]=(1-x_m)\sum_{k=1}^{n-1}G(k)
   \ \sim\ \frac{n^{1-x_m}}{\Gamma(1-x_m)}\ \longrightarrow\ \infty
   \qquad(x_m<1),
\]
\[
   \E[N_{\mathcal C}^2]\ \le\ 2\big(\E N_{\mathcal C}\big)^2
   +3 \E N_{\mathcal C},
\]
and consequently, for all $n\ge2$,
\[
  \P\big(\exists i\in[1,n-1]:\ \{\text{gap at }i\}|_{(0,n)}\big)\ \ge\
  \P(N_{\mathcal C}\ge1)
  \ \ge\ \frac{1}{2+3/\E N_{\mathcal C}}\ \ge\ c(x_m)\ >\ 0,
\]
with $\liminf_{n\to\infty}\P(N_{\mathcal C}\ge1)\ge\tfrac12$.
\end{prop}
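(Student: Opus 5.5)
The argument is a second-moment (Paley--Zygmund) bound on $N_{\mathcal C}$, built entirely from the exact product formulas for cap events recorded above.

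\emph{First moment.} By linearity and $\P(\mathcal C_i)=(1-x_m)G(i+1)$, reindexing $k=i+1$ gives $\E N_{\mathcal C}=(1-x_m)\sum_{k=1}^{n-1}G(k)$. The asymptotic $G(k)\sim k^{-x_m}/\Gamma(1-x_m)$ is regularly varying of index $-x_m>-1$. Summing it gives $\sum_{k\le n-1}G(k)\sim n^{1-x_m}/((1-x_m)\Gamma(1-x_m))$, so the factor $1-x_m$ cancels and the stated asymptotic follows.

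\emph{Second moment.} Write $\E N_{\mathcal C}^2=\E N_{\mathcal C}+2\sum_{i<i'}\P(\mathcal C_i\cap\mathcal C_{i'})$ and apply \eqref{eq:cut_joint}. The off-diagonal sum becomes $2\sum_i\P(\mathcal C_i)\sum_{s\ge1}(1-x_m)G(s-1)$, with $G(0)=1$ and $s$ ranging up to $n-2-i$.

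The key comparison is that, after shifting the index, $(1-x_m)\sum_{s=1}^{S}G(s-1)$ is at most $(1-x_m)+\E N_{\mathcal C}$. The first term is the $s=1$ term $(1-x_m)G(0)$. The remaining terms are $(1-x_m)\sum_{k=1}^{S-1}G(k)\le\E N_{\mathcal C}$, since $S-1\le n-1$. Hence
\[
2\sum_{i<i'}\P(\mathcal C_i\cap\mathcal C_{i'})\le 2\,\E N_{\mathcal C}\,\big(\E N_{\mathcal C}+1\big)=2(\E N_{\mathcal C})^2+2\E N_{\mathcal C}.
\]
Adding the diagonal $\E N_{\mathcal C}$ gives $\E N_{\mathcal C}^2\le 2(\E N_{\mathcal C})^2+3\E N_{\mathcal C}$.

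The step I expect to need the most care is this index bookkeeping. One must check that the adjacent case $s=1$ is really covered by \eqref{eq:cut_joint}: there $G(0)$ is the empty product, and the caps at $i+1$ and $i'$ coincide. One must also check that every $s$ in range is at most $n-1$, so the partial sum is dominated by $\E N_{\mathcal C}$. The factor $2$ in front of $(\E N_{\mathcal C})^2$ is exactly the price of bounding the inner sum by the full first moment rather than by a tail.

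\emph{Conclusion.} Paley--Zygmund in the form $\P(N\ge1)\ge(\E N)^2/\E N^2$ gives
\[
\P(N_{\mathcal C}\ge1)\ \ge\ \frac{1}{2+3/\E N_{\mathcal C}}.
\]
Since $\E N_{\mathcal C}\ge(1-x_m)G(1)=(1-x_m)^2$ for all $n\ge2$, this is at least $c(x_m)=1/(2+3(1-x_m)^{-2})>0$. Because $\E N_{\mathcal C}\to\infty$, the $\liminf$ is at least $\tfrac12$.

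The first inequality in the display of the statement is the inclusion $\mathcal C_i\subseteq\{\text{gap at }i+1\}|_{(0,n)}$ noted before the proposition. Since $i+1\in[1,n-1]$ for $0\le i\le n-2$, any occurring certificate yields a gap in the required range.
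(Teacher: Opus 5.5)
Your proposal is correct and follows the paper's proof essentially step by step. Both use the exact first moment from the product formula, apply \eqref{eq:cut_joint} with the $G(0)=1$ term split off to reach $2(\E N_{\mathcal C})^2+3\E N_{\mathcal C}$, use Paley--Zygmund at $\theta=0$ together with $\E N_{\mathcal C}\ge(1-x_m)^2$, and finish with the inclusion of certificates in gap events.
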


\begin{proof}
Summing $\P(\mathcal C_i)=(1-x_m)G(i+1)$ over
$0\le i\le n-2$ and substituting $k=i+1$,
\[
   \E N_{\mathcal C}=\sum_{i=0}^{n-2}\P(\mathcal C_i)
   =(1-x_m)\sum_{i=0}^{n-2}G(i+1)
   =(1-x_m)\sum_{k=1}^{n-1}G(k).
\]
Since $G(k)\sim k^{-x_m}/\Gamma(1-x_m)$, and
$\sum_{k=1}^{n-1}k^{-x_m}\sim n^{1-x_m}/(1-x_m)$ for $x_m<1$, the two factors
$1-x_m$ cancel and $\E N_{\mathcal C}\sim n^{1-x_m}/\Gamma(1-x_m)\to\infty$.

For the second moment, write $N_{\mathcal C}=\sum_{i=0}^{n-2}\1_{\mathcal C_i}$
and expand the square, splitting the double sum according to $i=i'$, $i<i'$
and $i>i'$ and using that the last two blocks are equal:
\[
   N_{\mathcal C}^2
   =\sum_{i}\sum_{i'}\1_{\mathcal C_i}\1_{\mathcal C_{i'}}
   =\sum_{i}\1_{\mathcal C_i}^2+2\sum_{i<i'}\1_{\mathcal C_i}\1_{\mathcal C_{i'}}
   =N_{\mathcal C}+2\sum_{i<i'}\1_{\mathcal C_i\cap\mathcal C_{i'}},
\]
where the last equality uses $\1_A^2=\1_A$ and $\1_A\1_B=\1_{A\cap B}$. The
diagonal therefore contributes $N_{\mathcal C}$, and taking
expectations,
\begin{equation}\label{eq:cut_secondmoment}
   \E[N_{\mathcal C}^2]=\E N_{\mathcal C}
   +2\!\!\sum_{0\le i<i'\le n-2}\!\!\P(\mathcal C_i\cap\mathcal C_{i'}).
\end{equation}
It remains to bound the off-diagonal sum. Fix $i$ and put $s=i'-i$; by
\eqref{eq:cut_joint}, and since $G\ge0$,
\[
  \sum_{i'=i+1}^{n-2}\P(\mathcal C_i\cap\mathcal C_{i'})
  =\P(\mathcal C_i) (1-x_m)\sum_{s=1}^{n-2-i}G(s-1)
  \ \le\ \P(\mathcal C_i) (1-x_m)\sum_{k=0}^{n-2}G(k).
\]
This last sum starts at $k=0$ with $G(0)=1$ (an empty product), whereas
$\E N_{\mathcal C}=(1-x_m)\sum_{k=1}^{n-1}G(k)$ starts at $k=1$; separating
the $k=0$ term,
\[
  (1-x_m)\sum_{k=0}^{n-2}G(k)
  =(1-x_m)+(1-x_m)\sum_{k=1}^{n-2}G(k)
  \ \le\ (1-x_m)+\E N_{\mathcal C}\ \le\ 1+\E N_{\mathcal C}.
\]
Summing over $i$ and using $\sum_i\P(\mathcal C_i)=\E N_{\mathcal C}$ gives
$2\sum_{i<i'}\P(\mathcal C_i\cap\mathcal C_{i'})
\le 2 \E N_{\mathcal C}(\E N_{\mathcal C}+1)$, so that, by
\eqref{eq:cut_secondmoment},
\[
  \E[N_{\mathcal C}^2]\ \le\ \E N_{\mathcal C}
  +2\E N_{\mathcal C}(\E N_{\mathcal C}+1)
  =2(\E N_{\mathcal C})^2+3\E N_{\mathcal C}.
\]

Since $N_{\mathcal C}=N_{\mathcal C}\1_{\{N_{\mathcal C}\ge1\}}$,
Cauchy-Schwarz gives
$\E N_{\mathcal C}\le(\E[N_{\mathcal C}^2])^{1/2} \P(N_{\mathcal C}\ge1)^{1/2}$,
that is, $\P(N_{\mathcal C}\ge1)\ge(\E N_{\mathcal C})^2/\E[N_{\mathcal C}^2]$
(the Paley-Zygmund inequality at $\theta=0$). Combined with the second-moment
bound,
\[
  \P(N_{\mathcal C}\ge1)\ \ge\
  \frac{(\E N_{\mathcal C})^2}{2(\E N_{\mathcal C})^2+3\E N_{\mathcal C}}
  =\frac{1}{2+3/\E N_{\mathcal C}} .
\]
As $\mathcal C_i\subseteq\{\text{gap at }i+1\}|_{(0,n)}$ with $i+1\in[1,n-1]$, the event
$\{N_{\mathcal C}\ge1\}$ is contained in $\{\exists i\in[1,n-1]:\text{gap at }i\}|_{(0,n)}$,
which gives the first bound. Finally $\E N_{\mathcal C}\ge(1-x_m)G(1)=(1-x_m)^2>0$
for every $n\ge2$, so the right-hand side is bounded below by a positive
$c(x_m)$ uniformly in $n$; and $\E N_{\mathcal C}\to\infty$ makes
$1/(2+3/\E N_{\mathcal C})\to\tfrac12$, giving the stated $\liminf$.
\end{proof}

\begin{lemma}\label{lem:cutmarked}
In the marked model of \Cref{def:marked}, define, for $n/3\le i\le 2n/3$,
\[
   \mathcal C^\sharp_i:=\{R^\sharp_j<i+1-j\ \text{for }1\le j\le i\}
   \cap\{R^\sharp_{i+1}<1\}
   \cap\{R^\sharp_k<k\ \text{for }i+2\le k\le n-1\}.
\]
Then $\mathcal C^\sharp_i\subseteq\{\text{gap}^\sharp\text{ at }i+1\}$, and
the certificate count
\[
   N^\sharp_{\mathcal C}:=\#\{i\in[n/3,2n/3]:\ \mathcal C^\sharp_i\
   \text{occurs}\}
\]
satisfies $\P(N^\sharp_{\mathcal C}\ge1)\ge c(x_m)>0$; consequently
\[
   \P\big(\exists i:\ \text{gap}^\sharp\text{ at }i\big)\ \ge\ c(x_m)\ >\ 0,
\]
uniformly in $n$ and in the marked parameters $m,s$. In particular, on the
rainbow event intersected with $B$, each one-sided disconnection event of
the reduction satisfies
$\P(\mathcal D^L), \P(\mathcal D^R)\ \ge\ c(x_m)$, uniformly over
rainbows.
\end{lemma}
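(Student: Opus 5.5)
The plan is to copy the proof of \Cref{prop:cutpoints} almost verbatim. The marked vertices only add caps that are either harmless or already built into the marked laws. The certificate $\mathcal C^\sharp_i$ is again a cap event on independent coordinates, so its probability and all pairwise intersections are exact products. A second-moment bound then gives $\P(N^\sharp_{\mathcal C}\ge1)\ge c(x_m)$, and the transfer to $\mathcal D^L,\mathcal D^R$ is the inclusion $\{\text{gap}^\sharp\text{ at some }i\}\subseteq\mathcal D^L$ recorded after \Cref{def:marked}.

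\textbf{Containment $\mathcal C^\sharp_i\subseteq\{\text{gap}^\sharp\text{ at }i+1\}$.} I would check all pairs $0\le j\le i+1<k\le n+m$ case by case.
\begin{itemize}
\item For $1\le j\le i$, the cap $R^\sharp_j<i+1-j<k-j$ forbids the edge.
\item For $j=i+1$, the cap $R^\sharp_{i+1}<1$ forbids every edge.
\item For $j=0$, since $R^\sharp_0=\infty$, the edge exists iff $R^\sharp_k\ge k$, so I need $R^\sharp_k<k$ for every $k\ge i+2$. For $k\le n-1$ this is the third group of caps. For $k=n$, the marked law gives $R^\sharp_n<n$. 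For $k=n+t$ with $1\le t\le m-1$, the marked law gives $R^\sharp_{n+t}<D_t\le n+t$. For $k=n+m$, the marked law gives $R^\sharp_{n+m}<s\le n+m$ by balance.
\end{itemize}
So the marked vertices never need to be constrained by the certificate, and its caps touch only the i.i.d.\ Pareto coordinates $1,\dots,n-1$.

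\textbf{Moments.} This makes the computation independent of $m,s$. The caps give
\[
\P(\mathcal C^\sharp_i)=G(i)(1-x_m)\prod_{k=i+2}^{n-1}\Big(1-\frac{x_m}{k}\Big)=(1-x_m)G(i)\frac{G(n-1)}{G(i+1)} .
\]
For $i\in[n/3,2n/3]$, the asymptotics of $G$ make the tail ratio bounded below by a constant depending only on $x_m$, roughly $(i/n)^{x_m}\ge3^{-x_m}$. Hence $\E N^\sharp_{\mathcal C}\ge c\,n\cdot n^{-x_m}\to\infty$.

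For $i<i'$, the vertex-wise minimum of the caps is again explicit. The caps of $\mathcal C^\sharp_{i'}$ below $i'+1$ are $i'+1-j$ for $j\le i'$, and at $j=i+1$ the cap $1$ dominates. The tail caps $k$ of $\mathcal C^\sharp_i$ on $(i+1,i']$ are weaker than $i'+1-k$ exactly when $k<i'+1-k$. Since the tail caps $k$ of $\mathcal C^\sharp_i$ never go below $n/3$, I would bound the joint probability above by simply dropping the tail caps of $\mathcal C^\sharp_i$ on $(i+1,i']$. This gives
\[
\P(\mathcal C^\sharp_i\cap\mathcal C^\sharp_{i'})\le \P(\mathcal C^\sharp_i)\,(1-x_m)\,G(s-1)\cdot\frac{1}{\prod_{k=i+2}^{i'}(1-x_m/k)}\quad\text{with } s=i'-i.
\]
The last factor is at most a constant $C(x_m)$, because all these $k$ are at least $n/3$ and there are fewer than $n$ of them. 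Summing as in \Cref{prop:cutpoints} should give $\E[(N^\sharp)^2]\le C(\E N^\sharp)^2+C\,\E N^\sharp$, and Paley--Zygmund yields $\P(N^\sharp\ge1)\ge c(x_m)$ for all $n$ beyond some $n_0(x_m)$.

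\textbf{Small $n$.} For $n<n_0$ the window $[n/3,2n/3]$ may even be empty, so I would argue separately. Requiring $R^\sharp_u<1$ for all $1\le u\le n-1$ gives a gap at $1$, by the same check as above. This event has probability $(1-x_m)^{n-1}\ge(1-x_m)^{n_0}$, so the gap statement holds uniformly in $n$ after shrinking $c(x_m)$.

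\textbf{Transfer and main obstacle.} The final claim follows from the conditional-law identification stated after \Cref{def:marked}: given the rainbow event and $B$, the radii of $[a,d]$ have the marked laws, and a marked gap implies $\mathcal D^L$; the right side follows by reflection. I expect the main obstacle to be the joint-probability bound, specifically the interaction between the tail caps of $\mathcal C^\sharp_i$ and the near caps of $\mathcal C^\sharp_{i'}$. The restriction $i\ge n/3$ is what makes the tail factor harmless, and I would try the crude ``drop the weaker caps'' bound first.
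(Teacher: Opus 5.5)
Your proposal is correct and follows the paper's proof essentially step by step. The shared steps are: the same case check for the inclusion, using that the marked radii are capped below their positions; the same exact product $\P(\mathcal C^\sharp_i)=(1-x_m)G(i)G(n-1)/G(i+1)$; the same joint bound, obtained by discarding the tail caps of $\mathcal C^\sharp_i$ on $[i+2,i']$ and using $G(i+1)/G(i')\le C(x_m)$; Paley--Zygmund; the separate all-radii-below-one argument for short overhangs; and the transfer via the conditional law recorded after \Cref{def:marked}. There are two harmless slips: a tail cap $k$ is weaker than $i'+1-k$ when $k>i'+1-k$, which holds on all of $[i+2,i']$ because $i'\le 2i$, so dropping those caps loses nothing; and the compensating product in your joint bound should run up to $i'+1$, with the extra factor $(1-x_m/(i'+1))^{-1}$ absorbed into $C(x_m)$.
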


\begin{proof}
We begin with the inclusion $\mathcal C^\sharp_i\subseteq
\{\text{gap}^\sharp\text{ at }i+1\}$, which we verify for every integer $i$
with $1\le i\le n-2$. Consider a pair $(j,k)$ with $0\le j\le i+1<k\le n+m$.
If $j=0$, an edge requires $R^\sharp_k\ge k$; this is excluded by the third
group of caps for $k\le n-1$, and is impossible for $k\ge n$, since every
marked radius is capped below its position ($R^\sharp_n<n$,
$R^\sharp_{n+\kappa}<D_\kappa\le n+\kappa$ for $1\le\kappa\le m-1$, and
$R^\sharp_{n+m}<s\le n+m$ by balance). If
$1\le j\le i$, then $R^\sharp_j<i+1-j<k-j$; if $j=i+1$, then
$R^\sharp_{i+1}<1\le k-j$. Hence no pair crosses the cut at $i+1$.

\begin{figure}[ht]\centering
\begin{tikzpicture}[xscale=1.02, yscale=1.32]
  \draw[zline] (-0.7,0) -- (10.7,0);
  \foreach \x/\lab in {0/0, 0.8/1, 2.1/j, 3.9/i, 4.7/{i+1}, 5.5/{i+2}, 7.6/{n-1}, 8.4/n, 10.1/{n+m}}
     {\fill (\x,0) circle (1.4pt); \node[below=3pt] at (\x,0) {\footnotesize $\lab$};}
  \draw[cut] (4.7,-0.5) -- (4.7,2.15);
  \node[gray!65!black] at (4.7,2.42) {\footnotesize cut at $i+1$};
  \draw[excluded] (0,0) to[bend left=45] coordinate[pos=0.8](A2) (10.1,0);
  \node[red!70!black, above=3pt] at (A2) {\footnotesize (a$_2$)};
  \draw[excluded] (0,0) to[bend left=40] coordinate[pos=0.72](A1) (7.6,0);
  \node[red!70!black, above=3pt] at (A1) {\footnotesize (a$_1$)};
  \draw[excluded] (2.1,0) to[bend left=46] coordinate[pos=0.30](Bm) (6.6,0);
  \node[red!70!black, above=3pt] at (Bm) {\footnotesize (b)};
  \draw[excluded] (4.7,0) to[bend left=70] coordinate[midway](Cm) (5.5,0);
  \node[red!70!black, above=3pt] at (Cm) {\footnotesize (c)};
  \draw[brc] (0.75,-0.62) -- (3.85,-0.62) node[midway,below=4pt]{\footnotesize $R^\sharp_j<i+1-j$};
  \draw[brc] (4.55,-0.62) -- (4.85,-0.62) node[midway,below=4pt]{\footnotesize $R^\sharp_{i+1}<1$};
  \draw[brc] (5.55,-0.62) -- (7.55,-0.62) node[midway,below=4pt]{\footnotesize $R^\sharp_k<k$};
  \draw[brc] (8.45,-0.62) -- (10.05,-0.62) node[midway,below=4pt]{\footnotesize position caps};
  \node[anchor=north west, align=left] at (-0.4,-1.55)
    {\footnotesize (a$_1$) $j=0$, $k\le n-1$: an edge needs $R^\sharp_k\ge k$, excluded by the third cap group\\[1pt]
     \footnotesize (a$_2$) $j=0$, $k\ge n$: an edge needs $R^\sharp_k\ge k$, excluded by the position caps\\[1pt]
     \footnotesize (b) $1\le j\le i$:\ \ $R^\sharp_j<i+1-j<k-j$\\[1pt]
     \footnotesize (c) $j=i+1$:\ \ $R^\sharp_{i+1}<1\le k-j$};
\end{tikzpicture}
\caption{The inclusion $\mathcal C^\sharp_i\subseteq\{\text{gap}^\sharp\text{
at }i+1\}$. A pair $(j,k)$ with $j\le i+1<k$ would have to span the dashed
cut. The four dashed arcs exhaust the possibilities: $j=0$, which splits
according to whether $k$ falls inside the overhang or beyond it, and the two
remaining ranges of $j$. Below the axis are the three cap groups of
$\mathcal C^\sharp_i$ and, on the right, the caps that every radius of the
marked model carries anyway. The vertex $0$ has $R^\sharp_0=\infty$ and is
capped by neither.}
\label{fig:cutmarked-inclusion}
\end{figure}

We turn to the probability bound, first treating the finitely many short
overhangs. Since $n>m\ge1$ in \Cref{def:marked}, we have $n\ge2$

First, suppose
$2\le n\le5$. On the event
$\mathcal E:=\{R^\sharp_j<1\ \text{for all }1\le j\le n-1\}$ there is a
gap$^\sharp$ at $1$: let $0\le j\le 1<k\le n+m$. If $j=1$, then
$R^\sharp_1<1\le k-1$. If $j=0$, then $R^\sharp_0=\infty$ and an edge only requires
$R^\sharp_k\ge k$, which fails for $2\le k\le n-1$ since $R^\sharp_k<1$ on
$\mathcal E$, and fails for $k\ge n$ by the caps on the marked radii listed
above. As $R^\sharp_1,\dots,R^\sharp_{n-1}$ are i.i.d.\ Pareto,
$\P(\mathcal E)=(1-x_m)^{n-1}\ge(1-x_m)^{4}$, which is the asserted bound in
this range.

Suppose from now on that $n\ge6$. Every integer of $[n/3,2n/3]$ is then an
admissible cut position, since $\lfloor 2n/3\rfloor\le n-2$, and there are at
least $n/3-1\ge n/9$ of them, so the count $N^\sharp_{\mathcal C}$ of the
statement runs over at least $n/9$ positions.

We first compute the one-point probabilities. Every cap of
$\mathcal C^\sharp_i$ is at a vertex of $\{1,\dots,n-1\}$, whose radii are
i.i.d.\ Pareto, so the probability is again an exact product (the marked
radii are unconstrained by $\mathcal C^\sharp_i$ and do not enter here):
\[
   \P(\mathcal C^\sharp_i)
   =(1-x_m) G(i)\prod_{k=i+2}^{n-1}\Big(1-\frac{x_m}k\Big)
   =(1-x_m) \frac{G(i) G(n-1)}{G(i+1)}
   =\frac{(1-x_m) G(n-1)}{1-x_m/(i+1)} ,
\]
where the last step used that $G(i)/G(i+1)=(1-x_m/(i+1))^{-1}$. Since
$1-x_m\le 1-x_m/(i+1)\le1$, we see that $\P(\mathcal C^\sharp_i)$ lies between $(1-x_m)G(n-1)$ and $G(n-1)$
whatever $i$ is, so it depends on the cut position only through a bounded
factor. As $G(n-1)\ge c(x_m) n^{-x_m}$ we obtain
$\P(\mathcal C^\sharp_i)\ge c(x_m) n^{-x_m}$ and hence
\[
   \E N^\sharp_{\mathcal C}\ \ge\ \frac n9 c(x_m) n^{-x_m}
   \ =\ c(x_m) n^{1-x_m}.
\]

\begin{figure}[ht]\centering
\begin{tikzpicture}[xscale=1.0, yscale=1.0]
  \fill[gray!14] (4.3,0) rectangle (7.3,3.15);
  \fill[gray!30] (7.4,0) rectangle (8.0,3.15);
  \draw[->, gray!55] (-0.3,0) -- (10.5,0) node[right]{\footnotesize $v$};
  \draw[->, gray!55] (-0.3,0) -- (-0.3,3.5) node[above]{\footnotesize cap};
  \draw[innerarch] (0.1,1.00) -- (3.8,0.06);
  \draw[innerarch] (4.3,1.10) -- (9.9,3.00);
  \fill[teal!75!black] (4.05,0.06) circle (1.7pt);
  \draw[forced, dashed] (0.1,2.30) -- (7.3,0.06);
  \draw[forced, dashed] (8.0,2.355) -- (9.9,3.00);
  \fill[red!75!black] (7.7,0.06) circle (1.7pt);
  \node[red!75!black, anchor=south west] at (0.15,2.32) {\footnotesize $\mathcal C^\sharp_{i'}$};
  \node[teal!75!black, anchor=north west] at (0.15,0.96) {\footnotesize $\mathcal C^\sharp_i$};
  \foreach \x/\lab in {0.1/1, 4.05/{i+1}, 7.7/{i'+1}, 9.9/{n-1}}
     {\draw[gray!55] (\x,0) -- (\x,-0.12); \node[below=1pt] at (\x,0) {\footnotesize $\lab$};}
  \draw[brc] (4.32,-0.62) -- (7.28,-0.62)
     node[midway,below=4pt]{\footnotesize $i+2\le v\le i'$};
  \draw[brc] (7.44,-0.62) -- (7.98,-0.62);
  \node[anchor=west] at (8.12,-0.88) {\footnotesize $v=i'+1$};
  \node[anchor=north] at (4.9,-1.45)
     {\footnotesize caps drop to $\min(v,\,i'+1-v)$ and to $1$ respectively};
\end{tikzpicture}
\caption{The joint bound. Cap profiles of $\mathcal C^\sharp_i$ (solid) and
$\mathcal C^\sharp_{i'}$ (dashed) over the vertices $1,\dots,n-1$; each
descends to its pinch at $1$, then jumps back up and rises as $v$. The caps of
the intersection are the vertex-wise minimum, so they follow the lower of the
two curves. On $1\le v\le i+1$ that is the solid profile, and on
$v\ge i'+2$ the two coincide; only the two shaded groups change. Because
$i'\le2i$ over the range of cut positions used, the dashed profile is the
lower one across the whole of the first shaded group.}
\label{fig:cutmarked-joint}
\end{figure}

We next bound the joint probabilities. Fix $n/3\le i<i'\le2n/3$. The caps of
$\mathcal C^\sharp_i\cap\mathcal C^\sharp_{i'}$ are the vertex-wise minima of
the two cap vectors; comparing them with the caps of $\mathcal C^\sharp_i$
vertex by vertex, for $1\le v\le i$ the two caps are $i+1-v$ and $i'+1-v$,
with minimum $i+1-v$; at $v=i+1$ the minimum of $1$ and $i'-i$ is $1$; for
$i+2\le v\le i'$ the cap $v$ is replaced by $\min(v, i'+1-v)$; at $v=i'+1$ the
cap $i'+1$ is replaced by $1$; and for $i'+2\le v\le n-1$ both caps equal $v$.
Only the two middle groups change, so
\begin{multline*}
  \P(\mathcal C^\sharp_i\cap\mathcal C^\sharp_{i'})
  =\P(\mathcal C^\sharp_i)\cdot\frac{1-x_m}{1-x_m/(i'+1)}
  \prod_{v=i+2}^{i'}\frac{1-x_m/\min(v, i'+1-v)}{1-x_m/v}\\
  \le\ C(x_m) \P(\mathcal C^\sharp_i) (1-x_m) G(i'-i-1),
\end{multline*}
where we used that $1-x_m/\min(v,i'+1-v)\le1-x_m/(i'+1-v)$, so that after the
substitution $d=i'+1-v$ the numerators contribute
$\prod_{d=1}^{i'-i-1}(1-x_m/d)=G(i'-i-1)$, together with
$\prod_{v=i+2}^{i'}(1-x_m/v)^{-1}=G(i+1)/G(i')\le C(x_m)$ for
$n/3\le i<i'\le2n/3$.

It remains to sum. Exactly as in the proof of \Cref{prop:cutpoints},
$\E[(N^\sharp_{\mathcal C})^2]=\E N^\sharp_{\mathcal C}
+2\sum_{i<i'}\P(\mathcal C^\sharp_i\cap\mathcal C^\sharp_{i'})$, and by the
last display the inner sum over $i'>i$ is at most
$C(x_m) \P(\mathcal C^\sharp_i) (1-x_m)\sum_{k=0}^{n}G(k)$. Since
$(1-x_m)\sum_{k=0}^{n}G(k)\le C(x_m) n^{1-x_m}\le
C(x_m) \E N^\sharp_{\mathcal C}$ by the first-moment bound, summing over $i$
gives $2\sum_{i<i'}\P(\mathcal C^\sharp_i\cap\mathcal C^\sharp_{i'})
\le C(x_m)(\E N^\sharp_{\mathcal C})^2$. The diagonal term is of the same
order, because $\E N^\sharp_{\mathcal C}\ge c(x_m) n^{1-x_m}\ge c(x_m)$ for
$n\ge6$ and so $\E N^\sharp_{\mathcal C}\le
C(x_m)(\E N^\sharp_{\mathcal C})^2$. Hence
$\E[(N^\sharp_{\mathcal C})^2]\le C(x_m)(\E N^\sharp_{\mathcal C})^2$, and
Paley--Zygmund gives $\P(N^\sharp_{\mathcal C}\ge1)\ge c(x_m)>0$; with the
inclusion of the first paragraph this is the asserted bound for $n\ge6$.

It remains to transfer this to the disconnection events. On the rainbow event
intersected with $B$ the law of the
radii of $[a,d]$ is the marked law (see after \Cref{def:marked}),
and $\{\exists \text{gap}^\sharp\}\subseteq\mathcal D^L$; the mirror
argument gives $\mathcal D^R$.
\end{proof}

The final clause of \Cref{lem:cutmarked} completes the reduction of
\Cref{sec:reduction}.

\begin{remark}\label{rem:cutpoints}
\begin{enumerate}[(i)]\item \Cref{prop:cutpoints} is the classical cut-point construction of
one-dimensional $1/d^2$ percolation, cf.\ Newman and Schulman
\cite{NewmanSchulman1986}; in the radius model the certificate is a product
over vertices. 
\item The certified
positions form a strictly thinner set than the gaps:
$\E N_{\mathcal C}\asymp n^{1-x_m}$, while the number of gap positions is
expected to be of order $n^{1-x_m^2+o(1)}$; existence holds for
every $x_m<1$ either way, and the certified exponent $1-x_m>0$ is
the criticality condition of the model.
\end{enumerate}
\end{remark}

\subsection{All components are finite}\label{sec:main}

We now assemble the main result. Throughout this subsection $\{a,b\},\{c,d\}$
is a rainbow with the usual labels $\ell=c-a$, $m=d-c$,
$r=b-d$ (so $|\ell-r|\le m$, $R_a,R_b\ge b-a$, $R_c<\ell$, $R_d<r$), and for
cut positions $i\in(a,c)$ and $i'\in(d,b)$ we write
\[
  G^L(i):=\{\text{no pair in }[a,i]\times(i,d]\text{ connects}\},\qquad
  G^R(i'):=\{\text{no pair in }[c,i']\times(i',b]\text{ connects}\};
\]
in overhang coordinates these are the marked gap events of
\Cref{def:marked} and its mirror image.

\begin{lemma}\label{lem:confine}
Let $\{a,b\},\{c,d\}$ be a rainbow and suppose that $\mathcal N$
holds (no edge between the open overhangs $(a,c)$ and $(d,b)$) and that
$G^L(i)$ and $G^R(i')$ hold for some $i\in(a,c)$, $i'\in(d,b)$. Then, in the
full graph on $\Z$, the connected component of every vertex $v\in(i,i']$ is
contained in $(i,i']$. In particular every component meeting $(i,i']$ is
finite, and the inner and the outer arch lie in distinct components.
\end{lemma}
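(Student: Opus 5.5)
The plan is to show that no edge of the full graph on $\Z$ has one endpoint $u\in(i,i']$ and the other endpoint $w\notin(i,i']$. Once this holds, a path starting from any $v\in(i,i']$ can never take a first step out of $(i,i']$, so its component is trapped there. That component is then finite, since $(i,i']$ is a finite set of integers. Since $c,d\in(i,i']$ while $a\notin(i,i']$, the inner and outer arches lie in distinct components. I will not use $B$; besides $\mathcal N$, $G^L(i)$ and $G^R(i')$, I only need the balance inequalities of \Cref{def:rainbow} and the outer radii $R_a,R_b\ge b-a$.

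First I record the one-sided geometry, as in item (iii) of the reduction. For $u\in(a,c)$, balance gives $u-a\le\ell\le m+r\le b-u$. For $u\in(d,b)$, it gives $u-a\ge\ell+m\ge r\ge b-u$. I also note the escape step from \Cref{lem:window}. If $\{u,w\}\in E$ with $w<a$, then $R_u>u-a$, and since $R_a\ge b-a\ge u-a$ we get $\{a,u\}\in E$. Symmetrically, if $w>b$ then $\{u,b\}\in E$. So it suffices to exclude edges from $u\in(i,i']$ to $w\in[a,i]$ and to $w\in(i',b]$.

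Next, fix $u\in(i,i']$ and $w\in[a,i]$, and split according to where $u$ lies.
\begin{enumerate}[(1)]
\item If $u\le d$, the pair lies in $[a,i]\times(i,d]$, so $G^L(i)$ forbids the edge.
\item If $u\in(d,i']\subseteq(d,b)$ and $w\in(a,i]\subseteq(a,c)$, then $\mathcal N$ forbids the edge.
\item If $u\in(d,i']$ and $w=a$, the edge would give $R_u\ge u-a\ge b-u$. Together with $R_b\ge b-u$ this gives $\{u,b\}\in E$. But $u\in[c,i']$ and $b\in(i',b]$, which contradicts $G^R(i')$.
\end{enumerate}
The case $w\in(i',b]$ is the mirror image. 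If $u\ge c$, then $G^R(i')$ forbids the edge. If $u\in(i,c)$ and $w\in(d,b)$, then $\mathcal N$ forbids it. If $w=b$, the edge gives $R_u\ge b-u\ge u-a$, so $\{a,u\}\in E$ with $u\in(i,d]$, which contradicts $G^L(i)$.

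The only delicate step is the endpoint cases $w\in\{a,b\}$ and $w\notin[a,b]$. A single edge from an overhang vertex to the \emph{far} outer endpoint is not directly excluded by any hypothesis. It has to be rerouted, using balance, to an edge to the \emph{near} outer endpoint, which the opposite gap event then rules out. Everything else is bookkeeping of which rectangle $[a,i]\times(i,d]$ or $[c,i']\times(i',b]$ contains the pair.
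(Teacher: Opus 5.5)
Your proof is correct and follows the paper's argument. You exclude every edge leaving $(i,i']$ by the same case split into pairs covered by $G^L(i)$, $G^R(i')$ or $\mathcal N$, and you use the escape step of \Cref{lem:window} plus the balance inequalities to reroute an edge to the far outer endpoint (or beyond $[a,b]$) onto the near one, exactly as the paper does; the only differences are cosmetic (you first collapse exits beyond $[a,b]$ onto the same-side endpoint before rerouting, and you write out both sides instead of invoking reflection).
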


\begin{proof}
It suffices to show that no edge $\{u,w\}$ with $u\in(i,i']$ and
$w\notin(i,i']$ exists, since any path leaving $(i,i']$ contains such an
edge. Suppose $\{u,w\}\in E$. Exits beyond the window $[a,b]$ are handled
by the escape argument of \Cref{lem:window}: an edge $\{u,w\}$ with
$w\notin[a,b]$ forces $R_u$ to exceed the distance from $u$ to an outer
endpoint, hence (as $R_a,R_b\ge b-a$) forces an edge from $u$ to that endpoint;
in the cases below, the balance inequalities steer this to the endpoint
whose gap event it contradicts.

Consider first a right exit, $w>i'$.
If $u\in[c,i']$ and $w\le b$, the pair lies in $[c,i']\times(i',b]$,
contradicting $G^R(i')$. If $u\in[c,i']$ and $w>b$, then
$R_u\ge w-u>b-u$, and $R_b\ge b-a\ge b-u$, so $\{u,b\}\in E$ -- again a pair
in $[c,i']\times(i',b]$, contradicting $G^R(i')$. If $u\in(i,c)$ and
$w\in(i',b)\subseteq(d,b)$, the pair lies in $(a,c)\times(d,b)$,
contradicting $\mathcal N$. If $u\in(i,c)$ and $w\ge b$, then
$R_u\ge w-u\ge b-u\ge b-c=m+r\ge\ell>u-a$ (balance), and
$R_a\ge b-a\ge u-a$, so $\{a,u\}\in E$ -- a pair in $[a,i]\times(i,d]$ (as
$u\in(i,c)\subseteq(i,d]$), contradicting $G^L(i)$.

A left exit, $w\le i$, is the mirror image of a right exit: reflecting the
line exchanges $a\leftrightarrow b$, $c\leftrightarrow d$,
$\ell\leftrightarrow r$ and $G^L(i)\leftrightarrow G^R(i')$, and preserves
$\mathcal N$ and the balance inequalities, so the four cases above apply
with the roles of the two gap events exchanged.

Hence the component of every $v\in(i,i']$ is contained in
$(i,i']\subseteq(a,b)$, which is finite. Since $c,d\in(i,i']$ while
$a,b\notin(i,i']$, the two arches lie in distinct components.
\end{proof}

\Cref{lem:confine} strengthens, and in the final assembly replaces, the
reduction certificate of \Cref{sec:reduction}: there,
$B\cap\mathcal N\cap\mathcal D^L\cap\mathcal D^R$ shows only that the two
arches lie in distinct components, while here the same balance
inequalities, run from explicit cut positions, trap the component of
\emph{every} vertex between the cuts, which is the form \Cref{thm:main}
needs. The proof of \Cref{thm:main} accordingly uses only
\Cref{lem:confine} together with the certificate events of
\Cref{prop:scale}.

\begin{prop}\label{prop:scale}
For $k\ge1$ let $g=g_k=8^{2k-1}$, with the blocks of
\Cref{def:blocks}, and let $\widetilde{\cG}_k$ be the event that in each of
the four blocks exactly one vertex qualifies (radius $\ge32g$ in
$\mathsf a_k,\mathsf b_k$; radius in $[4g,5g]$ in
$\mathsf c_k,\mathsf d_k$) while every other block vertex has radius $<g$
(``quiet blocks''); the selected vertices are called $a,c,d,b$. Define
\[
   D_k:=\widetilde{\cG}_k\cap B_k\cap\mathcal N_k
        \cap\mathcal Q^L_k\cap\mathcal Q^R_k,
\]
where $B_k$, $\mathcal N_k$ are the containment and no-cross-edge events of
the reduction for the selected rainbow at scale $k$, and $\mathcal Q^L_k$ is the event
that some cut position $\tau$ in the middle third of $(a,c)$ carries the
cut-point certificate: $R_u<\tau+1-u$ for all non-block $u\in(a,\tau]$,
$R_{\tau+1}<1$, and $R_w<w-a$ for all non-block $w\in(\tau+1,c)$
($\mathcal Q^R_k$ is the mirror image in $(d,b)$). Thus $\mathcal Q^L_k$ is
the analogue in $(a,c)$ of the certificate event
$\{N^\sharp_{\mathcal C}\ge1\}=\{\exists\,\text{gap}^\sharp\}$ of
\Cref{lem:cutmarked}, and not of the disconnection event $\mathcal D^L$: it
provides an explicit cut position $\tau$, which is what \Cref{lem:confine}
requires. Then:
\begin{enumerate}
\item[(i)] on $D_k$ the hypotheses of \Cref{lem:confine} hold for some
  $i\in(a,c)$, $i'\in(d,b)$, and $(-g,g)\subseteq(c,d)\subseteq(i,i']$;
\item[(ii)] $\P(D_k)\ge\delta(x_m)>0$, uniformly in $k$;
\item[(iii)] the events
  $E_k:=\widetilde{\cG}_k\cap\mathcal N_k\cap\mathcal Q^L_k\cap\mathcal Q^R_k$,
  $k\ge1$, are independent, being measurable with respect to the radii in
  the disjoint annuli $\{u: g_k\le|u|<9g_k\}$.
\end{enumerate}
\end{prop}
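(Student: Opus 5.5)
For (i), I will check on $D_k$ the hypotheses of \Cref{lem:confine} directly. On $\widetilde{\cG}_k$ the selected $a,c,d,b$ form a rainbow, exactly as in the proof of \Cref{prop:rainbows-io}. Take $i=\tau$ from $\mathcal Q^L_k$ and $i'$ the mirror cut from $\mathcal Q^R_k$, and verify $G^L(\tau)$: no pair in $[a,\tau]\times(\tau,d]$ connects. I would split by the left endpoint $u$.
\begin{itemize}
\item For $u=a$, an edge to $w\in(\tau,d]$ needs $R_w\ge w-a$. This is excluded by the certificate for non-block $w<c$.
\item For block vertices in $\mathsf a_k\cup\mathsf c_k$ other than $a,c$, the edge is excluded by quietness $R<g$, since all relevant distances are at least $g$. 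Here I use that $\tau$ lies in the middle third of $(a,c)$, at distance at least $2g$ from both blocks.
\item For $w=c$, the edge is excluded by $R_c\le 5g<\ell$-type bounds.
\item For interior $w\in(c,d)$, the edge is excluded by $B_k$ together with $R_a$ irrelevance.
\item For $w=d$, the edge is excluded by the rainbow.
\item For non-block $u\in(a,\tau]$, the cap $R_u<\tau+1-u$ gives the result.
\item For $u=\tau+1$, the cap $R_{\tau+1}<1$ gives the result.
\end{itemize}
The mirror argument gives $G^R$, and $\mathcal N_k$ is assumed. The inclusion $(-g,g)\subseteq(c,d)$ is immediate from $c\le -g$, $d\ge g$.

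For (ii) I would factor. Conditionally on the block radii (the $\widetilde\cG_k$ configuration), $B_k$ depends only on radii in $(c,d)$, and $\mathcal Q^L_k,\mathcal Q^R_k$ on disjoint non-block radii in the two overhangs. The event $\mathcal N_k$ is a decreasing event in all radii, as are $B_k$ and the certificate events; the quiet-block part of $\widetilde\cG_k$ is also decreasing once the selected vertices are fixed. So I will condition on the identities and radii of $a,c,d,b$ and use Harris/FKG on the remaining i.i.d.\ radii to bound $\P(B_k\cap\mathcal N_k\cap\mathcal Q^L_k\cap\mathcal Q^R_k\mid\cdot)$ below by the product of the individual probabilities. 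The factors are as follows.
\begin{itemize}
\item $\P(B_k)\ge(4/9)^{2x_m}$ by \Cref{lem:contain}.
\item $\P(\mathcal N_k)\ge5^{-2x_m^2}$ by the Harris computation of \Cref{sec:reduction}, since $\ell,r<4m$.
\item $\P(\mathcal Q^L_k)\ge c(x_m)$ by rerunning the second-moment argument of \Cref{lem:cutmarked}, with caps removed on the $O(g)$ block vertices. Removing caps only raises one-point probabilities by bounded factors and changes joint ones consistently.
\end{itemize}
Finally, $\P(\widetilde\cG_k)$ is bounded below uniformly: each block contributes $g\cdot\frac{x_m}{20g}(1-x_m/g)^{g-1}\approx \frac{x_m}{20}e^{-x_m}$, similarly $\frac{x_m}{32}e^{-x_m}$.

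The main obstacle is (ii)'s use of FKG alongside the nonmonotone exactly-one-qualifying condition. I handle it by conditioning on which vertices qualify and on their radii: the remaining requirement on all other radii is a conjunction of upper caps, hence decreasing, and all the other events are decreasing given the selected ones. A secondary nuisance is adapting \Cref{lem:cutmarked} to $(a,c)$ with uncapped block vertices. For this I would redo the one- and two-point products with those $O(g)$ factors deleted, checking ratios stay $C(x_m)$-bounded since $\ell\asymp g$.

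For (iii), $\widetilde\cG_k$, $\mathcal N_k$, $\mathcal Q^L_k$ and $\mathcal Q^R_k$ only involve radii at positions in $[a,b]\setminus(c,d)$. That set lies in $\{g\le|u|<9g\}$, whereas $B_k$ (excluded from $E_k$) uses $(c,d)$. These annuli are disjoint across $k$ as in \Cref{prop:rainbows-io}, since $9g_k<g_{k+1}=64g_k$. Independence of the radii then gives independence of the $E_k$.
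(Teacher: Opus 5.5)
Your proposal is correct and follows the paper's proof essentially step for step. For (i) you make the same case check of $G^L(\tau)$ via the certificate caps, quietness, $R_c<\ell$, $B_k$ and balance. For (ii) you use Harris on the remaining i.i.d. radii with the factors of \Cref{lem:contain}, the cross-edge bound and the block-deleted second moment of \Cref{lem:cutmarked}; conditioning on the selected vertices instead of on all block radii $\mathcal F_k$ is an inessential difference. For (iii) you use the same annulus argument. One small slip there: $\widetilde{\cG}_k$ involves all block radii, including block vertices inside $(c,d)$ and outside $[a,b]$, not only positions in $[a,b]\setminus(c,d)$. Since the blocks lie in $\{g\le|u|<9g\}$, the conclusion of (iii) is unaffected.
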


\begin{proof}
On $\widetilde{\cG}_k$ the selected vertices $a,c,d,b$ form a rainbow: the
qualifying radii are those of \Cref{def:blocks}, and the verification in the
proof of \Cref{prop:rainbows-io} applies unchanged (the quiet-block
condition only removes further candidates). In particular
$\ell,r\in(6g,8g)$, $m\in[2g,4g)$ and $|\ell-r|\le m$.

(i) Take $i:=\tau$ from $\mathcal Q^L_k$. The overhang has length
$n=\ell\in(6g,8g)$ and $\tau$ lies in its middle third, so $\tau-a\ge2g$ and
$c-\tau\ge2g$. Block vertices inside $(a,c)$ lie within $g$ of $a$ (the rest of
$\mathsf a_k$) or within $g$ of $c$ (the rest of $\mathsf c_k$); on
$\widetilde{\cG}_k$ they are quiet, so they satisfy the corresponding caps
($R_u<g\le\tau+1-u$ for $u$ near $a$, and $R_w<g<w-a$ for $w$ near $c$). Hence
\emph{every} $u\in(a,\tau]$ has $R_u<\tau+1-u$ and \emph{every}
$w\in(\tau+1,c)$ has $R_w<w-a$. We now check that $G^L(\tau)$ holds, i.e.\ that no pair in
$[a,\tau]\times(\tau,d]$ connects: for $u\in(a,\tau]$ and $w>\tau$,
$R_u<\tau+1-u\le w-u$; for $u=a$ and $w\in(\tau,c)$, $R_w<w-a$ (resp.\
$R_{\tau+1}<1$); for $u=a$ and $w=c$, $R_c<\ell=c-a$; for $u=a$ and
$w\in(c,d)$, $R_w<\min(w-a,b-w)\le w-a$ by $B_k$; for $u=a$ and $w=d$,
$R_d<r\le\ell+m=d-a$ by balance. The mirror argument gives that $G^R(i')$ holds from
$\mathcal Q^R_k$, and the no-cross-edge hypothesis of \Cref{lem:confine} is
exactly $\mathcal N_k$, one of the events defining $D_k$. Finally
$c\le-g<g\le d$ and $i<c$, $i'>d$.

(ii) Write
\[
   \mathcal F_k:=\sigma\big(R_v:\ v\in\mathsf a_k\cup\mathsf c_k
   \cup\mathsf d_k\cup\mathsf b_k\big)
\]
for the $\sigma$-algebra generated by the block radii, and condition on it.
The event $\widetilde{\cG}_k$ is $\mathcal F_k$-measurable, and on it the
block radii determine the selected vertices $a,c,d,b$, and with them which
caps and vertex ranges define the events
$B_k,\mathcal N_k,\mathcal Q^L_k,\mathcal Q^R_k$. On $\widetilde{\cG}_k$
every condition these events impose on a quiet block vertex holds
automatically, since a radius below $g$ meets caps that all exceed $g$, so
conditionally on $\mathcal F_k$ each of the four is a decreasing event of
the non-block radii, which are i.i.d.\ and independent of $\mathcal F_k$. By Harris \cite{Harris1960},
\[
  \P(D_k\mid\mathcal F_k) \1_{\widetilde{\cG}_k}\ \ge\
  \1_{\widetilde{\cG}_k}
  \P(B_k\mid\mathcal F_k) \P(\mathcal N_k\mid\mathcal F_k)
  \P(\mathcal Q^L_k\mid\mathcal F_k) \P(\mathcal Q^R_k\mid\mathcal F_k).
\]
Here $\P(B_k\mid\mathcal F_k)\ge(4/9)^{2x_m}$ by \Cref{lem:contain} (the quiet
interior block vertices satisfy their caps automatically, and dropping their
factors only increases the product);
$\P(\mathcal N_k\mid\mathcal F_k)\ge
e^{-16x_m^2}$ by Harris, since the expected number of cross edges is at most
$x_m^2 \ell\sum_{d\ge m}d^{-2}\le 8x_m^2$; and
$\P(\mathcal Q^{L}_k\mid\mathcal F_k),
\P(\mathcal Q^R_k\mid\mathcal F_k)\ge c(x_m)$ by the
proof of \Cref{lem:cutmarked}. The estimate in question is the moment bound
for the certificate count, $\P(N^\sharp_{\mathcal C}\ge1)\ge c(x_m)$; the
final clause of that lemma, which concerns $\mathcal D^L$ and
$\mathcal D^R$, plays no role here. The restriction of the caps to
non-block vertices affects neither moment. Indeed, first, the
certificate of $\mathcal Q^L_k$ imposes caps on the non-block vertices only,
so each one-point probability is the exact product of \Cref{lem:cutmarked}
with the factors of the block vertices deleted; deleting factors can only
increase the product, so the lower bound
$\E N^\sharp_{\mathcal C}\ge c(x_m)\,n^{1-x_m}$ is preserved. Second, the
two-point ratio
$\P(\mathcal C^\sharp_i\cap\mathcal C^\sharp_{i'})/\P(\mathcal C^\sharp_i)$
involves caps only at the positions of $[i+2,i'+1]$, which for $i,i'$ in the
middle third of the overhang lie at distance greater than $2g-1\ge g$ from
both ends of $(a,c)$, hence outside the block zones (contained in the
$g$-neighbourhoods of $a$ and of $c$): that computation is unchanged. Both
moment bounds therefore hold with the same constants, and Paley--Zygmund
yields the same $c(x_m)$; the mirror statement applies to
$\mathcal Q^R_k$. Finally
\[
  \P(\widetilde{\cG}_k)\ \ge\
  \Big(g\cdot\frac{x_m}{32g} \Big(1-\frac{x_m}g\Big)^{g}\Big)^2
  \Big(g\cdot\frac{x_m}{20g} \Big(1-\frac{x_m}g\Big)^{g}\Big)^2
  \ \ge\ \frac{x_m^4}{32^2\cdot20^2} e^{-8x_m}\ >\ 0,
\]
uniformly in $k$, whence (ii) with
$\delta(x_m):=\frac{x_m^4}{32^2\cdot20^2}e^{-8x_m}(4/9)^{2x_m}
e^{-16x_m^2}c(x_m)^2$.

(iii) All vertices entering $E_k$ (the four blocks and the two overhangs)
have $|u|\in[g_k,9g_k)$, and $9g_k<g_{k+1}=64g_k$, so the annuli are
disjoint and the radii independent.
\end{proof}

\begin{prop}\label{prop:scalesio}
Almost surely $D_k$ occurs for infinitely many $k$.
\end{prop}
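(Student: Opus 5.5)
The plan is to combine a uniform lower bound on $\P(D_k)$ with a zero--one law. As noted before \Cref{prop:scalesio}, the second Borel--Cantelli lemma does not apply directly. The events $E_k$ of \Cref{prop:scale}(iii) are independent, but $D_k=E_k\cap B_k$, and the containment event $B_k$ caps the radii of \emph{all} vertices of $(c,d)\supseteq(-g_k,g_k)$. Those vertices include the annuli of every lower scale, so the $D_k$ are not independent.

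\emph{Step 1: positive probability.} By \Cref{prop:scale}(ii), $\P(D_k)\ge\delta(x_m)$ for every $k$. Reverse Fatou, $\P(\limsup_k D_k)\ge\limsup_k\P(D_k)$, then gives $\P(D_k\text{ i.o.})\ge\delta(x_m)>0$. Kochen--Stone gives the same bound with the trivial estimate $\P(D_j\cap D_k)\le1$: the ratio $(\sum_{k\le K}\P(D_k))^2/\sum_{j,k\le K}\P(D_j\cap D_k)$ is at least $\delta^2$, which is positive, and that is all we need.

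\emph{Step 2: the event is tail up to finite modifications.} I would show that $A:=\{D_k\text{ i.o.}\}$ is unchanged, on the almost sure event that all radii are finite, when finitely many radii are modified. Fix a finite set $F\subset\Z$ and two configurations that agree off $F$ and have finite radii on $F$.

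\begin{itemize}
\item Once $g_k>\max_{u\in F}|u|$ and $g_k$ exceeds every radius on $F$ in both configurations, the set $F$ lies inside $(-g_k,g_k)\subseteq(c,d)$. It is disjoint from the annulus carrying $E_k$, so $E_k$ is the same in both configurations.
\item For such $k$, the caps that $B_k$ places on $F$ are $\min(v-a,v\text{-to-}b)\ge 6g_k$, and these hold automatically in both configurations.
\item Apart from $F$, $B_k$ depends on radii that the two configurations share. Hence $D_k$ agrees in both configurations for all large $k$, and $A$ agrees as well.
\end{itemize}

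An event invariant under arbitrary changes of finitely many coordinates of an i.i.d.\ family coincides, up to a null set, with an event in the tail $\sigma$-algebra. Concretely, $A$ equals $\{D^{(M)}_k\text{ i.o.}\}$ for every $M$, where $D^{(M)}_k$ drops the caps on $|u|<M$; the latter is measurable with respect to $\{R_u:|u|\ge M\}$. Kolmogorov's zero--one law then gives $\P(A)\in\{0,1\}$.

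\emph{Step 3: conclusion.} Steps 1 and 2 together give $\P(A)=1$.

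The main obstacle I expect is Step 2, specifically checking that each ingredient of $D_k$ stays fixed under finite modifications for large $k$. This includes the selection of $a,c,d,b$ in $\widetilde{\cG}_k$ and the certificates $\mathcal Q^{L}_k,\mathcal Q^R_k$, all of which live in the annulus $g_k\le|u|<9g_k$ and are therefore untouched once $F$ is inside $(-g_k,g_k)$. It also requires checking that the caps of $B_k$ at fixed vertices diverge with $k$, which follows from $\ell,r>6g_k$.
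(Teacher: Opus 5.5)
Your proof is correct, and Step 2 is essentially the paper's zero--one argument. Your device $D^{(M)}_k$ is the one the paper writes out in the continuum version (\Cref{prop:scalesiocont}). The paper's discrete proof rests on the same two observations: a fixed vertex lies in at most one annulus, and its containment cap at scale $k$ is at least $8g_k-|v|$.

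Step 1 takes a different and shorter route. The paper proves the quasi-independence bound $\P(D_j\cap D_k)\le(9/4)^{2x_m}\P(D_j)\P(D_k)$ for $j<k$. This uses two facts: $D_j$ is independent of $E_k$, and $\P(D_k)\ge(4/9)^{2x_m}\P(E_k)$ after conditioning on the annulus radii. It then feeds the bound into Kochen--Stone to get $\P(D_k\text{ i.o.})\ge(4/9)^{2x_m}$. You instead observe that the uniform bound $\P(D_k)\ge\delta(x_m)$ of \Cref{prop:scale}(ii) already gives $\P(\limsup_k D_k)\ge\delta(x_m)$ by reverse Fatou. No correlation estimate across scales is needed.

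The paper's route does buy two things. Its constant does not depend on the very small $\delta(x_m)$. It would also survive if one knew only $\sum_k\P(D_k)=\infty$ rather than a uniform lower bound. But the zero--one law consumes only positivity, so neither advantage is used. Your argument shows that once \Cref{prop:scale}(ii) is available, the only real content of \Cref{prop:scalesio} is the tail property.
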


\begin{proof}
Write $D_k=E_k\cap B_k$ with $E_k$ as in \Cref{prop:scale}(iii). For $j<k$,
the event $D_j$ is measurable with respect to the radii in
$[-9g_j,9g_j]$, and $9g_j\le 9g_k/64<g_k$, so $D_j$ and $E_k$ are
independent and
\[
  \P(D_j\cap D_k)\ \le\ \P(D_j\cap E_k)\ =\ \P(D_j) \P(E_k).
\]
On the other hand, write
\[
   \mathcal A_k:=\sigma\big(R_u:\ g_k\le\abs u<9g_k\big)
\]
for the $\sigma$-algebra generated by the annulus radii, so that $E_k$ is
$\mathcal A_k$-measurable by \Cref{prop:scale}(iii). The vertices of $(c,d)$
lying in the annulus are block vertices, and on $\widetilde{\cG}_k$ they are
quiet and meet their containment caps automatically; conditionally on
$\mathcal A_k$ the event $B_k$ is therefore a product over the radii of the
vertices $\abs v<g_k$, which are independent of $\mathcal A_k$, and
\[
   \P(B_k\mid\mathcal A_k) \1_{\widetilde{\cG}_k}\ \ge\
   (4/9)^{2x_m} \1_{\widetilde{\cG}_k}
\]
by \Cref{lem:contain}. Since $E_k\subseteq\widetilde{\cG}_k$, integrating
this against $\1_{E_k}$ gives $\P(D_k)\ge(4/9)^{2x_m}\P(E_k)$ and hence
\[
  \P(D_j\cap D_k)\ \le\ (9/4)^{2x_m} \P(D_j) \P(D_k)
  \qquad\text{for all }j<k .
\]
Since $\sum_k\P(D_k)=\infty$ by \Cref{prop:scale}(ii), the Kochen--Stone
lemma \cite{KochenStone1964} gives
\[
  \P(D_k\ \text{i.o.})\ \ge\
  \limsup_{K\to\infty}
  \frac{\big(\sum_{k\le K}\P(D_k)\big)^2}{\sum_{j,k\le K}\P(D_j\cap D_k)}
  \ \ge\ (4/9)^{2x_m}\ >\ 0 .
\]
Finally, $\{D_k\ \text{i.o.}\}$ is unchanged if the radii of finitely many
vertices are modified: a vertex $v$ lies in the annulus of at most one
scale, so at most one $E_k$ is affected; and for any fixed value of $R_v$
the containment cap of $B_k$ at $v$, which is at least $8g_k-|v|$ once $v$
is interior to scale $k$, is satisfied for all but finitely many $k$. Hence
$\1_{\{D_k\text{ i.o.}\}}$ is measurable with respect to
$\sigma(R_u:|u|>M)$ for every $M$, and the Kolmogorov zero--one law shows
that $\P(D_k\ \text{i.o.})\in\{0,1\}$; being positive, it equals $1$.
\end{proof}

We can now prove the theorem on the simplified model stated in
\Cref{sec:simplified}.

\begin{proof}[Proof of \Cref{thm:main}]
By \Cref{prop:scalesio} there is an event of probability one on which $D_k$
occurs for infinitely many $k$. Fix a realisation in this event and a vertex
$v\in\Z$, and choose $k$ with $D_k$ and $g_k>|v|$. By \Cref{prop:scale}(i),
$v\in(-g_k,g_k)\subseteq(i,i']$, and by \Cref{lem:confine} the component of
$v$ is contained in $(i,i']\subseteq(-9g_k,9g_k)$, hence finite. Since this
holds simultaneously for every $v\in\Z$, there is no infinite component.
\end{proof}

\begin{remark}
For $x_m\ge1$ no radius falls below $1$, so every nearest-neighbour edge
is present and $\Z$ is a single infinite component. That connectivity is
an artefact of the fixed vertex spacing rather than percolation. The continuum model, to which the
subcritical statement is transferred in \Cref{sec:continuum}, percolates for
$\lambda\beta\ge31$ (\Cref{thm:super}); the structural reason the two models
diverge is isolated in \Cref{rem:superlit}.
\end{remark}

\begin{corollary}\label{cor:diameter}
Let $x_m\in(0,1)$. Almost surely every connected component is finite, and
yet
\[
   \sup\{\operatorname{diam} C:\ C\ \text{a connected component}\}=\infty,
\]
where $\operatorname{diam} C:=\max C-\min C$ denotes the diameter of the
vertex set of $C$ in $\Z$.
\end{corollary}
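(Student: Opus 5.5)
The first assertion is \Cref{thm:main}, so only the unbounded diameter needs an argument. The plan is to combine the edges produced by \Cref{prop:edges-io} (in the form of \Cref{rem:edges-io-Z}) with finiteness of components. By \Cref{rem:edges-io-Z}, almost surely infinitely many edges $\{j,k\}$ with $j\le0<k$ cross the origin. Moreover, the dyadic construction in that proof gives, at infinitely many scales $k$, an edge with left endpoint at most $-2^{k-1}$ and right endpoint at least $2^{k-1}$. The span of such an edge is therefore at least $2^k$.

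Each such edge lies inside a single connected component $C$. That component contains both endpoints, so $\operatorname{diam}C\ge 2^k$. Taking the intersection of the probability-one event from \Cref{thm:main} with the probability-one event from \Cref{rem:edges-io-Z}, we obtain, almost surely, components that are finite but have diameter at least $2^k$ for infinitely many $k$. Hence the supremum is infinite.

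One could also bypass the scale construction and argue directly. There are infinitely many distinct crossing edges, and any fixed finite set of vertices supports only finitely many of them. If the spans were bounded by some $L$, every crossing edge would have both endpoints in $[-L,L]$, which leaves only finitely many possible edges and contradicts having infinitely many. So the spans are unbounded, and so are the component diameters.

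The main (mild) obstacle is simply to check that \Cref{rem:edges-io-Z} indeed transfers the Paley--Zygmund plus Borel--Cantelli argument of \Cref{prop:edges-io} to $\Z$ with the constants $x_m$. Beyond that, the counting argument above is deterministic.
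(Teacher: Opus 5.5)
Your proposal is correct and matches the paper's proof. The paper uses exactly your second, counting argument: by \Cref{rem:edges-io-Z} there are infinitely many distinct edges $\{j,k\}$ with $j\le0<k$, and only finitely many such pairs have $k-j\le L$, so the edge lengths, and hence the diameters of the components containing them, are unbounded; finiteness of every component is \Cref{thm:main}. Your first route through the dyadic scales (span at least $2^k$ at infinitely many $k$) is an equally valid, slightly more quantitative variant that draws on the proof of \Cref{prop:edges-io} rather than on its statement.
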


\begin{proof}
The first statement is \Cref{thm:main}. By \Cref{rem:edges-io-Z}, the
$\Z$-analogue of \Cref{prop:edges-io}, almost surely infinitely many distinct
edges $\{j,k\}$ with $j\le0<k$ are present. For each $L$ only finitely many
pairs $j\le0<k$ satisfy $k-j\le L$, so these edges have unbounded length.
The component containing such an edge contains both of its endpoints, hence
has diameter at least $k-j$. The supremum of the component diameters is
therefore almost surely infinite, although each individual component is
finite.
\end{proof}

\section{The continuum model}\label{sec:continuum}

We now return to the weight-dependent random connection model and prove
\Cref{thm:continuum}, by transferring \Cref{thm:main} to it. Throughout this section
$\mathcal X$ is a Poisson point process on $\R\times(0,1)$ of intensity
$\lambda>0$; a point $\mathbf x=(x,t)\in\mathcal X$ is identified with its
position $x$ and its radius $R_x:=\beta/t$, and two points are joined by an
edge iff $\abs{x-y}\le R_x\wedge R_y$, which is the connection rule of
\Cref{sec:setup}. Note that $R_x>\beta$ for every point, and that for
$r\ge\beta$ the points of radius greater than $r$ form a Poisson process of
intensity $\lambda\beta/r$ on the line.

The map $(x,t)\mapsto(\lambda x,t)$ sends $\mathcal X$ to a Poisson process
of intensity one and transforms the connection rule $(t\vee s)\abs{x-y}\le\beta$
into $(t\vee s)\abs{x'-y'}\le\lambda\beta$; it is an isomorphism of the two
random graphs. \emph{We therefore fix $\lambda=1$ and $\beta<1$ for the rest
of this section}; $\beta$ now plays the role that $x_m$ played in the
simplified model.

\Cref{thm:continuum} does not follow from \Cref{thm:main} by a
comparison argument. Coarse-graining $\R$ into boxes of side one and giving
box $i$ the maximal radius $R^*_i$ found in it produces radii with the
correct tail, but continuum connectivity only implies
$\abs{i-j}\le R^*_i\wedge R^*_j+1$ for the boxes, and
$R^*_i+1\ge\beta+1>1$ deterministically: the dominating box graph contains
every nearest-neighbour edge and is connected. Finer boxes fail the same way
at their own scale. As the Pareto tail exponent is exactly one, there is no
room in the exponent to absorb the additive error (the same criticality
that blocks a domination argument within the models also blocks one between
them). We therefore re-run the argument of
\Cref{sec:formation,sec:reduction,sec:cutpoints,sec:main} directly for the
Poisson model. Three ingredients need Poisson replacements: the
correlation inequality, the cut-point certificates and the zero--one law,
and they occupy most of this section. Everything else transfers verbatim or
becomes simpler.

\paragraph{Deterministic ingredients.} \Cref{prop:crossing} was
proved in the continuum to begin with. The proofs of \Cref{lem:window}, of
the reduction certificate of \Cref{sec:reduction} and of \Cref{lem:confine}
use only the symmetric rule $\abs{x-y}\le R_x\wedge R_y$, the ordering of the
positions and the balance inequalities, never that positions are integers;
they hold for every locally finite configuration of positions with radii,
and we use them for $\mathcal X$ without further comment. We keep the
notation of \Cref{sec:main}: for a rainbow $\{a,b\},\{c,d\}$ formed
by points of $\mathcal X$ and cut positions $i\in(a,c)$, $i'\in(d,b)$, the
events $G^L(i)$, $G^R(i')$ and $\mathcal N$ refer to pairs of points of
$\mathcal X$ (outer endpoints included). Since $\mathcal X$ is almost surely
locally finite, a component whose positions are confined to a bounded
interval is finite.

\paragraph{Poisson toolbox.} We use three standard facts (see
\cite{LastPenrose2017}): restrictions of $\mathcal X$ to disjoint measurable
regions of $\R\times(0,1)$ are independent; conditionally on the positions,
the marks are independent uniforms (marking theorem); and for measurable
$S\subseteq\R\times(0,1)$,
\[
   \P\big(\mathcal X\cap S=\emptyset\big)=e^{-\abs S},
\]
with $\abs S$ the Lebesgue measure of $S$. All certificates below are void
events of explicit regions, so their probabilities and joint probabilities
are exact exponentials (the continuum counterpart of the exact products of
\Cref{prop:cutpoints}). The role of the discrete tail sums is played by
\[
   h(s):=\int_0^s\P(R>u) \mathrm du
   =\begin{cases} s,& 0\le s\le\beta,\\[2pt]
     \beta\big(1+\log(s/\beta)\big),& s\ge\beta,\end{cases}
\]
so that $e^{-h(s)}=e^{-\beta}(\beta/s)^{\beta}$ for $s\ge\beta$.

We call an event $A$ of the marked Poisson process \emph{decreasing} if it is
preserved under removing points: if $\mu'\in A$ and $\mu\subseteq\mu'$, then
$\mu\in A$. All certificates below ($B$, $\mathcal N$ and the cut
certificates) are decreasing: each forbids the presence of points in a given
region of $\R\times(0,1)$.

\begin{lemma}\label{lem:fkg}
Let $A_1,\dots,A_k$ be decreasing events of $\mathcal X$. Then
\[
   \P(A_1\cap\dots\cap A_k)\ \ge\ \prod_{i=1}^k\P(A_i).
\]
The same holds for the restriction of $\mathcal X$ to any measurable region,
in particular conditionally on the configuration in the complementary
region.
\end{lemma}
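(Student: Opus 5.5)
This is the Harris--FKG inequality for the Poisson process, and I will reduce it to the case of two events and induct. If $A_1,\dots,A_{k-1}$ are decreasing, then so is their intersection, since each is preserved under removing points. So it suffices to prove $\P(A\cap A')\ge\P(A)\P(A')$ for decreasing $A,A'$ and iterate.

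For two events I will use the standard two-step route.

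\begin{itemize}
\item \emph{Finite products.} First prove the inequality for decreasing functions of finitely many independent coordinates by the classical Harris induction on the number of coordinates. The one-coordinate case is Chebyshev's rearrangement inequality; the inductive step conditions on the last coordinate and uses the induction hypothesis fibrewise. The marked Poisson process in a bounded window $W\times(0,1)$ can be realised as a total count $N\sim\mathrm{Poisson}(|W|)$ together with i.i.d.\ uniform locations. That is not directly a product with monotone structure, so instead I discretise.
\item \emph{Discretisation.} Partition $W\times(0,1)$ into $M$ small cells $Q_1,\dots,Q_M$. Approximate $\mathcal X$ by independent Bernoulli indicators $\xi_j=\1[\mathcal X(Q_j)\ge1]$, placing the point at a fixed representative location; equivalently, use independent Poisson counts $\mathcal X(Q_j)$. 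An event that is decreasing in the configuration induces an event that is coordinatewise decreasing in the counts $(\mathcal X(Q_j))_j$, and the counts are independent. The product-space Harris inequality therefore applies to the dyadic approximations.
\item \emph{Limit.} Let the mesh go to zero. By the martingale convergence theorem, $\P(A\mid\mathcal F_M)\to\1_A$ in $L^1$, where $\mathcal F_M$ is generated by the counts in the cells.
\end{itemize}

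The limit step is the delicate part, because conditional probabilities of decreasing events need not be decreasing functions of the coarse counts. I therefore expect to prove the inequality directly for events depending on finitely many counts, which are dense in the relevant $\sigma$-algebra. A general decreasing $A$ is then approximated from inside or outside by such events only if monotonicity survives, which it need not.

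The cleaner fix uses the concrete structure here. The events to which the lemma is applied ($B$, $\mathcal N$, the cut certificates) are all void events $\{\mathcal X\cap S=\emptyset\}$ or unions of such, indexed by cut position. For void events the inequality is immediate: $\P(\mathcal X\cap S_1=\emptyset,\ \mathcal X\cap S_2=\emptyset)=e^{-|S_1\cup S_2|}\ge e^{-|S_1|-|S_2|}$. For a general decreasing event I will cite the Poisson Harris--FKG inequality (e.g.\ \cite{LastPenrose2017}, Theorem~20.4) rather than reprove it, and present the finite-cell argument as the proof sketch.

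For the last sentence of the statement, the restriction of $\mathcal X$ to a region $S$ is again a Poisson process, so the same inequality applies to it. Conditioning on $\mathcal X\cap S^c$ leaves $\mathcal X\cap S$ with its unconditional law, by independence of the restrictions to disjoint regions. A decreasing event of the whole process, with the outside configuration frozen, is a decreasing event of $\mathcal X\cap S$. Applying the unconditional inequality fibrewise gives the conditional statement.
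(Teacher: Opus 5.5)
Your proposal is correct and takes the paper's route: the paper also cites Last--Penrose Theorem~20.4 (with their set $B=\emptyset$) for two decreasing events, iterates using that intersections of decreasing events are decreasing, and gets the conditional version from independence of the restrictions to disjoint regions. The discretisation sketch can be dropped; as you note, its claim that a decreasing event becomes a coordinatewise-decreasing function of the cell counts fails, because the event still depends on positions within cells.
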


\begin{proof}
For $k=2$ this is Theorem~20.4 of Last and Penrose \cite{LastPenrose2017},
the Harris--FKG inequality for Poisson processes on a general
$\sigma$-finite phase space (here $\R\times(0,1)$ with Lebesgue intensity) applied with their set $B=\emptyset$, so that both indicator functions
are decreasing everywhere; indicators are bounded, hence square-integrable.
For $k>2$ iterate, noting that an intersection of decreasing events is
decreasing. For the second statement, the restriction of $\mathcal X$ to the
complement of a region $S_0$ is itself a Poisson process and is independent
of $\mathcal X\cap S_0$, so the inequality applies to it conditionally on
$\mathcal X\cap S_0$.
\end{proof}

\paragraph{Cut certificates.} The following is the Poisson counterpart of
\Cref{prop:cutpoints} and \Cref{lem:cutmarked}. For
$a<c$ with $\ell:=c-a$, measurable $W\subseteq(a,c)$ and $\tau\in(a,c)$
define
\begin{equation}\label{eq:cutcont}
  \mathcal C^W_\tau:=\{\text{no point } u\in W\cap(a,\tau]\ \text{with}\ R_u\ge\tau-u\}
  \cap\{\text{no point } u\in W\cap(\tau,c)\ \text{with}\ R_u\ge u-a\}.
\end{equation}
This is stated directly in the one-sided geometry in which it is used: $a$ is
the position of the outer endpoint, whose radius never binds inside the
window, and the second group of caps prevents edges from $a$ that jump the
cut. The set $W$ allows the certificate to ignore designated zones near the
ends of the overhang (in the application: the block zones, whose points are
controlled separately).
Since $R_u\ge\beta$ always, the first group of caps forbids \emph{any} point
of $W\cap(\tau-\beta,\tau]$: an empty buffer of length $\beta$ replaces the
cap $R_{i+1}<1$ of \Cref{prop:cutpoints}. This is the only place where
$\beta<1$ enters the construction, through $1-\beta>0$ in
\eqref{eq:ENcont} below, exactly as $x_m<1$ entered \Cref{prop:cutpoints}.

\begin{prop}\label{prop:cutcont}
Let $a<c$ with $\ell=c-a\ge48$, and let $W\subseteq(a,c)$ be measurable with
$[a+\ell/3, a+2\ell/3]\subseteq W$. Let
\[
   N^W:=\#\big\{j\in\Z:\ \tau_j:=a+j\in[a+\ell/3, a+2\ell/3]\ \text{and}\
   \mathcal C^W_{\tau_j}\ \text{occurs}\big\}.
\]
Then
\begin{equation}\label{eq:ENcont}
  \E N^W\ \ge\ c_0(\beta) \ell^{1-\beta},
  \qquad
  \E\big[(N^W)^2\big]\ \le\ C_0(\beta) \big(\E N^W\big)^2,
  \qquad
  \P(N^W\ge1)\ \ge\ c(\beta)>0,
\end{equation}
uniformly in $\ell$ and $W$. Moreover $\{N^W\ge1\}$ is decreasing and
measurable with respect to $\mathcal X\cap(W\times(0,1))$.
\end{prop}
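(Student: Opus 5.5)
Each certificate $\mathcal C^W_\tau$ is a void event of an explicit region, so I will compute its probability and the joint probabilities of pairs exactly, and then repeat the second-moment argument of \Cref{prop:cutpoints} and \Cref{lem:cutmarked}. The measurability and monotonicity claims are immediate: $\mathcal C^W_\tau$ forbids points of $\mathcal X$ in the region
\[
S_\tau:=\{(u,t):u\in W\cap(a,\tau],\ \beta/t\ge\tau-u\}\cup\{(u,t):u\in W\cap(\tau,c),\ \beta/t\ge u-a\}\subseteq W\times(0,1).
\]
Hence $\mathcal C^W_\tau$ is decreasing and depends only on $\mathcal X\cap(W\times(0,1))$. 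A union of such events inherits both properties.

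\textbf{First moment.} By the void probability formula, $\P(\mathcal C^W_\tau)=e^{-|S_\tau|}$. The $t$-section of $S_\tau$ above $u$ has measure $\P(R>\tau-u)$ or $\P(R>u-a)$ respectively, so
\[
|S_\tau|\le \int_0^{\tau-a}\P(R\ge v)\,\mathrm dv+\int_{\tau-a}^{\ell}\P(R\ge v)\,\mathrm dv .
\]
The first integral is $h(\tau-a)$. The second is $\beta\log(\ell/(\tau-a))\le\beta\log 3$, since $\tau-a\ge\ell/3\ge\beta$. Dropping points outside $W$ only shrinks $S_\tau$. Using $e^{-h(s)}=e^{-\beta}(\beta/s)^\beta$, this gives $\P(\mathcal C^W_{\tau})\ge c(\beta)\ell^{-\beta}$ for every $\tau$ in the middle third. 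There are at least $\ell/3-1\ge\ell/4$ integer shifts $\tau_j$ there, because $\ell\ge48$, and so $\E N^W\ge c_0(\beta)\ell^{1-\beta}$.

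\textbf{Joint probabilities.} For $\tau<\tau'$ in the middle third with $s:=\tau'-\tau$, I compute $\P(\mathcal C_\tau\cap\mathcal C_{\tau'})=e^{-|S_\tau\cup S_{\tau'}|}$ and write
\[
|S_\tau\cup S_{\tau'}|=|S_\tau|+|S_{\tau'}\setminus S_\tau| .
\]
Over $u\in(\tau,\tau']\cap W$, which lies inside $W$ since that range is in the middle third, $S_{\tau'}$ has section $\P(R\ge\tau'-u)$ while $S_\tau$ has section $\P(R\ge u-a)$. The difference is at least $\P(R\ge \tau'-u)-\P(R\ge u-a)$. Integrating gives at least
\[
h(s)-\beta\log\frac{\tau'-a}{\tau-a}\ge h(s)-\beta\log 2 .
\]
This is the continuum analogue of the bound $G(i+1)/G(i')\le C(x_m)$ in \Cref{lem:cutmarked}. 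Over $u>\tau'$ the two sections coincide, and over $u\le\tau$ the set $S_{\tau'}$ only adds nonnegative measure. Therefore
\[
\P(\mathcal C_\tau\cap\mathcal C_{\tau'})\le C(\beta)\,\P(\mathcal C_\tau)\,e^{-h(s)}\le C(\beta)\P(\mathcal C_\tau)\min(1,s^{-\beta}),
\]
where I bound $e^{-h(s)}\le1$ for small $s$.

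\textbf{Summation and conclusion.} Summing over integer $s\le\ell$ gives
\[
\sum_{\tau'>\tau}\P(\mathcal C_\tau\cap\mathcal C_{\tau'})\le C(\beta)\,\P(\mathcal C_\tau)\,\ell^{1-\beta}\le C(\beta)\,\P(\mathcal C_\tau)\,\E N^W .
\]
Here $\beta<1$ is used through $1-\beta>0$. Summing over $\tau$ then yields
\[
\E(N^W)^2\le \E N^W+C(\beta)(\E N^W)^2\le C_0(\beta)(\E N^W)^2,
\]
since $\E N^W\ge c_0(\beta)48^{1-\beta}$ is bounded below. Paley--Zygmund gives $\P(N^W\ge1)\ge1/C_0(\beta)$.

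\textbf{Main obstacle.} The delicate step is the lower bound on $|S_{\tau'}\setminus S_\tau|$. I must check that the region controlling the cut at $\tau'$ genuinely adds about $h(s)$ of fresh measure, uniformly in $W$. This relies on the hypothesis that $W$ contains the whole middle third, so that $(\tau,\tau']\subseteq W$. I also need to handle the case $s<\beta$, where the buffer intervals overlap; there the trivial bound $e^{-h(s)}\le1$ suffices, because such $s$ contribute only $O(1)$ terms.
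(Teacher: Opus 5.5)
Your proposal is correct and follows the paper's proof essentially step for step. It matches in the void-event formula $\P(\mathcal C^W_\tau)=e^{-|S_\tau|}$, the bound $|S_\tau|\le h(\tau-a)+\beta\log 3$, the fresh measure $h(s)-\beta\log 2$ contributed by the strip $(\tau,\tau']\subseteq W$ to $|S_\tau\cup S_{\tau'}|$, the summation of $s^{-\beta}$ against $\ell^{1-\beta}\asymp\E N^W$, and Paley--Zygmund. One small point: the case $s<\beta$ that you flag never arises, because the grid positions are spaced by $1>\beta$.
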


\begin{proof}
The event $\mathcal C^W_\tau$ is the void event of
\[
  U^W_\tau:=\big\{(u,t):u\in W\cap(a,\tau],\ t\le 1\wedge\tfrac{\beta}{\tau-u}\big\}
   \cup \big\{(u,t):u\in W\cap(\tau,c),\ t\le\tfrac{\beta}{u-a}\big\},
\]
using $R_u\ge r\Leftrightarrow t\le\beta/r$ and, in the second group,
$u-a\ge\ell/3\ge16>\beta$. As $\mathcal X$ is Poisson of unit intensity,
$\P(\mathcal C^W_\tau)=e^{-\abs{U^W_\tau}}$. Moreover an
intersection of void events is the void event of the union of their
regions, so the joint probabilities are exact as well:
\[
   \P\big(\mathcal C^W_\tau\cap\mathcal C^W_{\tau'}\big)
   \ =\ e^{-\abs{U^W_\tau\cup U^W_{\tau'}}}
   \qquad\text{for all cut positions }\tau,\tau' .
\]
This is the continuum counterpart of the cap events of
\Cref{sec:cutpoints}, with the union of regions in place of the
vertex-wise minimum of caps.

For the one-point bound, removing the restriction to $W$ only enlarges the
region, so for $\tau=\tau_j$,
\[
  \abs{U^W_\tau}\ \le\ \int_0^{\tau-a}\Big(1\wedge\frac{\beta}s\Big)\mathrm ds
  +\int_{\tau-a}^{\ell}\frac{\beta}{s} \mathrm ds
  \ =\ h(\tau-a)+\beta\log\frac{\ell}{\tau-a}
  \ \le\ h(\tau-a)+\beta\log3,
\]
whence, by $e^{-h(x)}=e^{-\beta}(\beta/x)^\beta$ and $\tau-a\le\ell$,
\[
  \P(\mathcal C^W_\tau)\ \ge\ 3^{-\beta}e^{-\beta}\beta^{\beta} \ell^{-\beta}
  \ =:\ c_1(\beta) \ell^{-\beta}.
\]
The grid contains at least $\ell/3-2\ge\ell/6$ positions, so
$\E N^W\ge(c_1(\beta)/6) \ell^{1-\beta}=:c_0(\beta) \ell^{1-\beta}$.

For the two-point bound, let $\tau=\tau_j<\tau'=\tau_{j'}$ and
$s:=\tau'-\tau\ge1>\beta$. The strip $(\tau,\tau']$ lies in the middle third
of $(a,c)$, hence in $W$; there the region $U^W_{\tau'}$ contains
$\{(u,t):u\in(\tau,\tau'],\ t\le1\wedge\beta/(\tau'-u)\}$, of measure
$h(s)$, while $U^W_\tau$ meets the strip only in its second group
$\{t\le\beta/(u-a)\}$, of measure
$\beta\log\frac{\tau'-a}{\tau-a}\le\beta\log2$, because
$\tau'-a\le2\ell/3\le2(\tau-a)$. Therefore
$\abs{U^W_\tau\cup U^W_{\tau'}}\ge\abs{U^W_\tau}+h(s)-\beta\log2$, i.e.
\[
  \P\big(\mathcal C^W_\tau\cap\mathcal C^W_{\tau'}\big)
  \ \le\ 2^{\beta} \P(\mathcal C^W_\tau) e^{-h(s)} .
\]
We now sum. Writing $N^W=\sum_j\1_{\mathcal C^W_{\tau_j}}$ and expanding the
square as in the proof of \Cref{prop:cutpoints}, the diagonal terms
contribute $N^W$ itself, so
\[
  \E\big[(N^W)^2\big]=\E N^W
  +2\sum_{j<j'}\P\big(\mathcal C^W_{\tau_j}\cap\mathcal C^W_{\tau_{j'}}\big).
\]
For fixed $j$ the positions $\tau_{j'}$ with $j'>j$ are at distances
$s=1,2,\dots$ along the grid, at most $\lceil\ell\rceil$ of them, so the last
display and
\[
  \sum_{s=1}^{\lceil\ell\rceil}e^{-h(s)}
  =e^{-\beta}\beta^\beta\sum_{s=1}^{\lceil\ell\rceil}s^{-\beta}
  \le C_2(\beta) \ell^{1-\beta}
\]
give $\sum_{j'>j}\P(\mathcal C^W_{\tau_j}\cap\mathcal C^W_{\tau_{j'}})
\le2^{\beta}C_2(\beta) \P(\mathcal C^W_{\tau_j}) \ell^{1-\beta}$. Summing
over $j$,
\[
  \E\big[(N^W)^2\big]\ \le\ \E N^W+2^{1+\beta} C_2(\beta) \E N^W \ell^{1-\beta} .
\]
Both terms are at most a constant multiple of $(\E N^W)^2$. Indeed the
one-point bound gives $\ell^{1-\beta}\le\E N^W\!/c_0(\beta)$, which handles
the second; and since $\ell\ge48$ and $\beta<1$ we have $\ell^{1-\beta}\ge1$,
so $\E N^W\ge c_0(\beta)$ and hence
$\E N^W\le(\E N^W)^2/c_0(\beta)$, which handles the first. Therefore
\[
  \E\big[(N^W)^2\big]
  \ \le\ \frac{1+2^{1+\beta}C_2(\beta)}{c_0(\beta)} \big(\E N^W\big)^2 .
\]
The Paley--Zygmund inequality gives
$\P(N^W\ge1)\ge(\E N^W)^2/\E[(N^W)^2]\ge c(\beta)$. Finally each
$\mathcal C^W_{\tau_j}$ is the void event of a subset of $W\times(0,1)$, so
$\{N^W\ge1\}$ is a finite union of such events: decreasing and measurable
with respect to $\mathcal X\cap(W\times(0,1))$.
\end{proof}

\begin{remark}\label{rem:cutcont}
The correspondence with \Cref{sec:cutpoints} is exact:
$e^{-h(\tau-a)}=e^{-\beta}(\beta/(\tau-a))^{\beta}$ replaces
$(1-x_m)G(i+1)\sim i^{-x_m}/\Gamma(1-x_m)$, certified cuts appear at density
$\ell^{-\beta}$, and their count diverges precisely when $\beta<1$.
\end{remark}

\begin{lemma}\label{lem:containcont}
For a rainbow with labels $\ell,m,r$ (so $\ell,r>m$, the windows of \Cref{def:rainbow} being non-empty) and
$\ell\wedge r\ge\beta$, the containment event
$B=\set{R_v<\min(v-a, b-v)\ \text{for every point }v\in(c,d)}$ satisfies
\[
   \P(B)\ =\ \exp\Big(-\beta\int_0^m\frac{\mathrm ds}{\min(\ell+s, r+m-s)}\Big)
   \ \ge\ \Big(\tfrac49\Big)^{\beta}.
\]
\end{lemma}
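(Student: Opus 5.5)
The plan is to write $B$ as a void event of an explicit region, in direct analogy with the cap events of \Cref{sec:cutpoints}, and then bound the resulting integral the same way the discrete sum was bounded in \Cref{lem:contain}.

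\textbf{Exact formula.} Put $v=c+s$ with $s\in(0,m)$, so that $v-a=\ell+s$ and $b-v=r+m-s$. Write $D(s):=\min(\ell+s,r+m-s)$. Using $R_v\ge D\Leftrightarrow t\le\beta/D$, the event $B$ is the void event of
\[
  U:=\{(c+s,t):\ 0<s<m,\ t\le 1\wedge\beta/D(s)\}.
\]
Since $\ell,r>m>0$, we have $D(s)\ge\min(\ell,r)\ge\beta$. Hence $1\wedge\beta/D(s)=\beta/D(s)$. At unit intensity the Poisson toolbox gives $\P(B)=e^{-\abs U}$, and $\abs U=\beta\int_0^m \mathrm ds/D(s)$. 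This is the stated identity.

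\textbf{Uniform bound.} It remains to show $\int_0^m \mathrm ds/D(s)\le\log\tfrac94$.
\begin{itemize}
\item The integrand is pointwise non-increasing in $\ell$ and in $r$.
\item For fixed $m$, I would like to increase the integral by replacing $\ell$ and $r$ with $m$. The constraint is only $\ell,r>m$, with the infimum not attained, so the supremum over admissible $(\ell,r)$ is bounded by the value at $\ell=r=m$ by monotonicity.
\item At $\ell=r=m$, $D(s)=\min(m+s,2m-s)$, which is symmetric about $s=m/2$. Then
\[
  \int_0^m\frac{\mathrm ds}{D(s)}=2\int_0^{m/2}\frac{\mathrm ds}{m+s}=2\log\tfrac{3m/2}{m}=\log\tfrac94 .
\]
\end{itemize}
Therefore $\P(B)\ge e^{-\beta\log(9/4)}=(4/9)^\beta$.

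There is no real obstacle. The only points needing care are that the hypotheses make the truncation $1\wedge\cdot$ irrelevant, and that the monotone comparison to $\ell=r=m$ is legitimate even though that boundary case is excluded. Unlike \Cref{lem:contain}, no Hermite--Hadamard step or factor $2$ from $1-z\ge e^{-2z}$ is needed, because the Poisson void probability is already an exact exponential. This is why the exponent here is $\beta$ rather than $2x_m$.
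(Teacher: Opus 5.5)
Your proposal is correct and follows the paper's proof essentially step for step. You write $B$ as the void event of $\{(u,t):u\in(c,d),\ t\le\beta/D(u-c)\}$, using $D\ge\ell\wedge r\ge\beta$ to drop the truncation; you then use monotonicity of $1/D$ in $\ell$ and $r$ to reduce to $\ell=r=m$, where symmetry gives $2\log\tfrac32=\log\tfrac94$.
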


\begin{proof}
$B$ is the void event of
$\{(u,t):u\in(c,d),\ t\le\beta/D(u-c)\}$ with
$D(s)=\min(\ell+s,r+m-s)\ge\ell\wedge r\ge\beta$, which gives the formula.
Each $1/D(s)$ is nonincreasing in $\ell$ and in $r$, so the integral is
largest at $\ell=r=m$, where, by the symmetry $s\mapsto m-s$,
\[
  \int_0^m\frac{\mathrm ds}{\min(m+s, 2m-s)}
  \ =\ 2\int_0^{m/2}\frac{\mathrm ds}{m+s}\ =\ 2\log\tfrac32\ =\ \log\tfrac94 .
  \qedhere
\]
\end{proof}

\paragraph{Disconnection at a fixed scale.} Fix $k\ge1$, $g=g_k=8^{2k-1}$
and the blocks of \Cref{def:blocks},
\[
   \mathsf a_k=(-9g,-8g],\quad \mathsf c_k=(-2g,-g],\quad
   \mathsf d_k=[g,2g),\quad \mathsf b_k=[8g,9g),
\]
now regarded as intervals carrying the point process. Let
$\widetilde{\cG}_k$ be the event that each block contains exactly one point
with radius in its qualifying window ($\ge32g$ for
$\mathsf a_k,\mathsf b_k$; in $[4g,5g]$ for $\mathsf c_k,\mathsf d_k$) and every
other point has radius $< g$ (``quiet blocks"); the four qualifiers are called $a,c,d,b$. As
in \Cref{prop:rainbows-io}, on $\widetilde{\cG}_k$ these points form a
rainbow with $m\in(2g,4g)$ and $\ell,r\in(6g,8g)$.

\begin{prop}\label{prop:scalecont}
Define
\[
   D_k:=\widetilde{\cG}_k\cap B_k\cap\mathcal N_k
        \cap\mathcal Q^L_k\cap\mathcal Q^R_k,
\]
where $B_k$ and $\mathcal N_k$ are the containment and no-cross-edge events
of the selected rainbow, $\mathcal Q^L_k:=\{N^{W_L}\ge1\}$ is the
certified-cut event of \Cref{prop:cutcont} for the overhang $(a,c)$ with
$W_L:=(-8g,-2g]$, and $\mathcal Q^R_k$ is its mirror image in $(d,b)$ with
$W_R:=[2g,8g)$. Then:
\begin{enumerate}
\item[(i)] on $D_k$ the hypotheses of \Cref{lem:confine} hold for some
  $i\in(a,c)$, $i'\in(d,b)$, and $(-g,g)\subseteq(c,d)\subseteq(i,i']$;
\item[(ii)] $\P(D_k)\ge\delta(\beta)>0$, uniformly in $k$;
\item[(iii)] the events
  $E_k:=\widetilde{\cG}_k\cap\mathcal N_k\cap\mathcal Q^L_k\cap\mathcal Q^R_k$,
  $k\ge1$, are independent, $E_k$ being measurable with respect to the
  restriction of $\mathcal X$ to the annulus
  $A_k:=\{u:g_k\le\abs u<9g_k\}\times(0,1)$.
\end{enumerate}
\end{prop}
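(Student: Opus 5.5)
The plan mirrors the proof of \Cref{prop:scale}, with the Poisson toolbox in place of the i.i.d.\ product structure.

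\textbf{(i) The hypotheses of \Cref{lem:confine}.} Take $i:=\tau$, a certified cut from $\mathcal Q^L_k$, and $i'$ its mirror from $\mathcal Q^R_k$. The certificate only constrains points of $W_L=(-8g,-2g]$. Every other point of $(a,c)$ lies in $\mathsf a_k\cup\mathsf c_k$. On $\widetilde{\cG}_k$ such a point is quiet, with radius $<g$. Since $\tau$ lies in the middle third of $(a,c)$, we have $\tau-a\ge 2g$ and $c-\tau\ge2g$. So a quiet point $u$ near $a$ satisfies $R_u<g\le\tau-u$, and a quiet point $w$ near $c$ satisfies $R_w<g<w-a$. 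Hence every point $u\in(a,\tau]$ has $R_u<\tau-u$, and every point $w\in(\tau,c)$ has $R_w<w-a$. Then $G^L(\tau)$ is checked case by case, exactly as in \Cref{prop:scale}(i):
\begin{itemize}
\item pairs with $u\in(a,\tau]$ are excluded by $R_u<\tau-u<w-u$;
\item pairs from $a$ to a point of $(\tau,c)$ are excluded by the second cap group;
\item the pair $\{a,c\}$ is excluded by $R_c<\ell$;
\item pairs from $a$ into $(c,d)$ are excluded by $B_k$;
\item the pair $\{a,d\}$ is excluded by balance.
\end{itemize}
The right side is the mirror image, and $\mathcal N_k$ is assumed. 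Finally $c\le -g<g\le d$ gives $(-g,g)\subseteq(c,d)\subseteq(i,i']$. No integrality was used, as noted under the deterministic ingredients.

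\textbf{(ii) Uniform lower bound on $\P(D_k)$.} Condition on $\mathcal F_k$, the restriction of $\mathcal X$ to the four blocks times $(0,1)$. The event $\widetilde{\cG}_k$ is $\mathcal F_k$-measurable and fixes $a,c,d,b$.

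\emph{Rainbow factor.} $\P(\widetilde{\cG}_k)$ factorises over the blocks. For each block, the probability of exactly one qualifier and no other point of radius $\ge g$ is $\mu e^{-\mu}\cdot e^{-(\nu-\mu)}$. Here $\mu$, the mean number of qualifiers, is $g\beta/(32g)$ or $g\beta(1/(4g)-1/(5g))$, and $\nu=g\cdot\beta/g=\beta$. This is a positive constant depending only on $\beta$.

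\emph{The other four events.} Given $\mathcal F_k$ on $\widetilde{\cG}_k$, each of $B_k$, $\mathcal N_k$, $\mathcal Q^L_k$, $\mathcal Q^R_k$ is a decreasing event of the Poisson process off the blocks. Any condition they place on quiet block points holds automatically, because all relevant caps exceed $g$. So \Cref{lem:fkg}, applied conditionally, bounds the conditional probability of their intersection below by the product of the four conditional probabilities. I bound them in turn.
\begin{itemize}
\item $\P(B_k\mid\mathcal F_k)\ge(4/9)^\beta$: the integral of \Cref{lem:containcont} restricted to non-block positions is no larger.
\item $\P(\mathcal N_k\mid\mathcal F_k)$ is a void probability. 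It is at least $\exp(-\E[\text{cross edges}])$, and by Mecke the expected number of cross edges is at most $\beta^2\ell\int_m^\infty s^{-2}\,\de s\le 4\beta^2$.
\item $\P(\mathcal Q^{L}_k\mid\mathcal F_k)$ and $\P(\mathcal Q^{R}_k\mid\mathcal F_k)$ are at least $c(\beta)$ by \Cref{prop:cutcont}. Its hypotheses hold: $\ell>6g\ge48$, and the middle third of $(a,c)$ lies in $(a+2g,c-2g)\subseteq W_L$. Moreover $\mathcal Q^L_k$ depends only on $\mathcal X\cap(W_L\times(0,1))$, which is independent of $\mathcal F_k$.
\end{itemize}
Multiplying the rainbow factor by these four bounds gives $\delta(\beta)$.

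\textbf{(iii) Independence across $k$.} Each ingredient of $E_k$ lives in $\{g\le|u|<9g\}$: the blocks, $W_L$ and $W_R$, and the cross edges $\mathcal N_k$, whose endpoints lie in the overhangs, inside $(-9g,-g)\cup(g,9g)$. Since $9g_k<g_{k+1}$, the annuli $A_k$ are disjoint, and independence follows from the Poisson restriction property.

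\textbf{Main obstacle.} I expect the subtle step to be the conditional FKG in (ii). One must check two things. First, that the events are genuinely decreasing in the off-block process once the qualifiers are fixed. Second, that the events depending on the random positions of $a,c,d,b$ are handled by conditioning and not by unconditional FKG, since $\widetilde{\cG}_k$ itself is not monotone.
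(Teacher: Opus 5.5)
Your parts (i) and (iii) coincide with the paper's proof. So does most of (ii): the treatment of $\widetilde{\cG}_k$, $B_k$, $\mathcal Q^L_k$, $\mathcal Q^R_k$, and the conditional use of \Cref{lem:fkg}.

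\textbf{The gap is the factor for $\mathcal N_k$.} You call $\P(\mathcal N_k\mid\mathcal F_k)$ a void probability and read off $\exp(-\E[\text{cross edges}])$. But $\mathcal N_k$ is not the void event of any region of $\R\times(0,1)$. It forbids \emph{pairs} of points $u\in(a,c)$, $w\in(d,b)$ with $w-u\le R_u\wedge R_w$. A point in one overhang, however large its radius, is harmless unless a partner of large radius sits in the other overhang. So the formula $\P(\mathcal X\cap S=\emptyset)=e^{-\abs S}$ does not apply. The bound is not automatic by the other obvious route either: Harris for pair events only gives $\prod_e(1-p_e)$, which is smaller than $\exp(-\sum_e p_e)$.

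\textbf{How the paper repairs it.} It conditions further on the positions of the non-block points.
\begin{enumerate}
\item Given these positions, the marks are independent uniforms by the marking theorem.
\item Every cross pair has $w-u>m$, so it connects with probability $(\beta/(w-u))^2\le\tfrac12$.
\item Harris on the marks, together with $1-z\ge e^{-2z}$ on $[0,\tfrac12]$, gives the lower bound $\exp\bigl(-2\beta^2\sum(w-u)^{-2}\bigr)$.
\item Jensen's inequality and the Mecke equation, with your estimate $\int_{(a,c)}\int_{(d,b)}(w-u)^{-2}\,\mathrm dw\,\mathrm du\le\ell/m\le4$, then yield $\P(\mathcal N_k\mid\mathcal F_k)\ge e^{-8\beta^2}$ on $\widetilde{\cG}_k$.
\end{enumerate}

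Your sharper bound $e^{-4\beta^2}$ is in fact correct, but it needs its own argument. One option is a Russo-type formula in the intensity combined with \Cref{lem:fkg}. Write $\P_s$, $\E_s$ for the law of the non-block process at intensity $s$. One gets $\frac{\mathrm d}{\mathrm ds}\log\P_s(\mathcal N_k)\ge-\frac{\mathrm d}{\mathrm ds}\E_s[\#\text{cross edges}]$, and integrating over $s\in[0,1]$ gives the bound. With either repair your $\delta(\beta)$ goes through. Note that the genuinely delicate point in (ii) is this pair event, not the conditional FKG step you flagged.
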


\begin{proof}
(i) Note first that the middle third of $(a,c)$ is contained in
$(-7g,-3g)\subseteq W_L$: indeed $a+\ell/3\ge-9g+2g=-7g$ and
$a+2\ell/3=c-\ell/3\le-g-2g=-3g$, so \Cref{prop:cutcont} applies with
$W=W_L$ (and $\ell>6g\ge48$). Let $\tau$ be a certified cut position of
$\mathcal Q^L_k$; then $\tau-a\ge\ell/3>2g$ and $c-\tau\ge\ell/3>2g$. We
verify $G^L(\tau)$, i.e.\ that no pair of points in $[a,\tau]\times(\tau,d]$
is joined. A point $u\in(a,\tau]$ with $u\in W_L$ has $R_u<\tau-u<w-u$ for
every $w>\tau$ by the certificate. A block point $u\in(a,\tau]$ lies in
$(a,-8g]$ (the $\mathsf c_k$-zone lies to the right of $\tau$, since
$\tau\le c-2g<-3g$), so $u-a\le g$ and, being quiet, $R_u<g\le\tau-u$. It
remains to exclude edges from $a$: for $w\in(\tau,c)\cap W_L$ the
certificate gives $R_w<w-a$; for a quiet block point $w\in(-2g,c)$ we have
$R_w<g<6g<w-a$; for $w=c$, $R_c<\ell=c-a$ (rainbow); for a point
$w\in(c,d)$, $R_w<\min(w-a,b-w)\le w-a$ by $B_k$; and for $w=d$,
$R_d<r\le\ell+m=d-a$ by balance. Hence $G^L(\tau)$ holds; the mirror
argument gives $G^R(\tau')$ for a certified position $\tau'$ of
$\mathcal Q^R_k$, and $\mathcal N_k$ holds by definition. Finally
$c\le-g<g\le d$ and $\tau<c$, $\tau'>d$.

(ii) Write
\[
   \mathcal F_k:=\sigma\big(\mathcal X\cap
   ((\mathsf a_k\cup\mathsf c_k\cup\mathsf d_k\cup\mathsf b_k)\times(0,1))\big)
\]
for the $\sigma$-algebra generated by the configuration in the four blocks,
and condition on it; the rest of $\mathcal X$ is an independent Poisson
process. The event $\widetilde{\cG}_k$ is $\mathcal F_k$-measurable, and on
it the qualifiers $a,c,d,b$ are determined. On
$\widetilde{\cG}_k$ the quiet block points satisfy all caps that concern
them: those in $(c,d)$ have $R<g<\ell\wedge r\le\min(v-a,b-v)$, so they
satisfy the caps of $B_k$; and having radius $<g<m$ they cannot carry a
cross edge, so $\mathcal N_k$ reduces to the pairs of non-block points.
Conditionally, $B_k$, $\mathcal N_k$, $\mathcal Q^L_k$, $\mathcal Q^R_k$ are
therefore decreasing events of the non-block configuration, and
\Cref{lem:fkg} gives
\[
  \P(D_k\mid\mathcal F_k) \1_{\widetilde{\cG}_k}\ \ge\
  \1_{\widetilde{\cG}_k} 
  \P(B_k\mid\mathcal F_k) \P(\mathcal N_k\mid\mathcal F_k) 
  \P(\mathcal Q^L_k\mid\mathcal F_k) \P(\mathcal Q^R_k\mid\mathcal F_k).
\]
Here $\P(B_k\mid\mathcal F_k)\ge(4/9)^{\beta}$ by \Cref{lem:containcont}, since
restricting the void region to non-block positions only increases the
probability. For $\mathcal N_k$, condition further on the positions of the
non-block points: write
\[
   \mathcal P_k:=\mathcal F_k\vee\sigma\big(x:\ (x,t)\in\mathcal X,\
   x\notin\mathsf a_k\cup\mathsf c_k\cup\mathsf d_k\cup\mathsf b_k\big).
\]
Given $\mathcal P_k$ the radii are independent (marking theorem), the
no-edge events of the cross pairs are decreasing in them, and each pair at
distance $w-u>m$ connects with probability
$(\beta/(w-u))^2\le\tfrac12$, so by Harris \cite{Harris1960} and
$1-z\ge e^{-2z}$ on $[0,\tfrac12]$,
\[
  \P(\mathcal N_k\mid\mathcal P_k)\ \ge\
  \exp\Big(-2\beta^2\!\!\sum_{\text{cross pairs}}\!\!(w-u)^{-2}\Big);
\]
the sum is $\mathcal P_k$-measurable, so taking conditional expectations
given $\mathcal F_k$ and using Jensen and the Mecke equation, on
$\widetilde{\cG}_k$,
$\E\big[\sum_{\text{cross pairs}}(w-u)^{-2}\bigm|\mathcal F_k\big]
 \le\int_{(a,c)}\!\int_{(d,b)}(w-u)^{-2} \mathrm dw \mathrm du
 \le\ell/m\le4$, whence
$\P(\mathcal N_k\mid\mathcal F_k)\1_{\widetilde{\cG}_k}
 \ge e^{-8\beta^2}\1_{\widetilde{\cG}_k}$. The certified-cut events
satisfy $\P(\mathcal Q^{L}_k\mid\mathcal F_k),\P(\mathcal Q^R_k\mid\mathcal F_k)\ge
c(\beta)$ by \Cref{prop:cutcont}. Finally, in each block the points of
radius $\ge g$ form a Poisson process of mean $\beta$, of which the
qualifying window carries mean $\beta/32$ (outer blocks) resp.\
$\beta/20$ (inner blocks); ``exactly one qualifier and no other point of
radius $\ge g$'' therefore has probability $(\beta/32)e^{-\beta}$ resp.\
$(\beta/20)e^{-\beta}$, and, the blocks being disjoint,
\[
  \P(\widetilde{\cG}_k)=\Big(\frac{\beta}{32}\Big)^2\Big(\frac{\beta}{20}\Big)^2
  e^{-4\beta}\ >\ 0,
\]
uniformly in $k$. Together,
$\delta(\beta):=\big(\tfrac{\beta}{32}\big)^2\big(\tfrac{\beta}{20}\big)^2
e^{-4\beta}(4/9)^{\beta}e^{-8\beta^2}c(\beta)^2$.

(iii) The blocks, the overhangs $(a,c)\subseteq(-9g,-g)$ and
$(d,b)\subseteq(g,9g)$, and the sets $W_L,W_R$ all have positions of
absolute value in $[g,9g)$, and $9g_k<g_{k+1}=64g_k$: the annuli $A_k$ are
disjoint, so the restrictions of $\mathcal X$ to them are independent.
\end{proof}

\begin{prop}\label{prop:scalesiocont}
Almost surely $D_k$ occurs for infinitely many $k$.
\end{prop}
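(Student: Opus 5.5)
The plan mirrors the proof of \Cref{prop:scalesio}, with the Poisson toolbox in place of the product structure of the lattice radii. We write $D_k=E_k\cap B_k$, where $E_k$ is as in \Cref{prop:scalecont}(iii). For $j<k$ the event $D_j$ is measurable with respect to $\mathcal X\cap([-9g_j,9g_j]\times(0,1))$. Since $9g_j<g_k$, this region is disjoint from the annulus $A_k$, so $D_j$ and $E_k$ are independent and
\[
   \P(D_j\cap D_k)\le\P(D_j\cap E_k)=\P(D_j)\P(E_k).
\]

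Next we compare $\P(E_k)$ with $\P(D_k)$. Let $\mathcal A_k:=\sigma(\mathcal X\cap A_k)$. The event $E_k$ is $\mathcal A_k$-measurable, and $E_k\subseteq\widetilde{\cG}_k$. On $\widetilde{\cG}_k$, the points of $(c,d)$ that lie in the annulus are quiet block points, with $R<g<\min(v-a,b-v)$, so they meet the caps of $B_k$ automatically. Conditionally on $\mathcal A_k$, the event $B_k$ is therefore the void event of the region $\{(u,t):\abs u<g_k,\ t\le\beta/D(u-c)\}$. The restriction of $\mathcal X$ to $(-g_k,g_k)\times(0,1)$ is independent of $\mathcal A_k$. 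The void region is a subset of the one in \Cref{lem:containcont}, so
\[
   \P(B_k\mid\mathcal A_k)\1_{\widetilde{\cG}_k}\ge(4/9)^{\beta}\1_{\widetilde{\cG}_k}.
\]
Integrating against $\1_{E_k}$ gives $\P(D_k)\ge(4/9)^\beta\P(E_k)$, and hence
\[
   \P(D_j\cap D_k)\le(9/4)^\beta\P(D_j)\P(D_k)\qquad\text{for all }j<k.
\]
By \Cref{prop:scalecont}(ii), $\sum_k\P(D_k)=\infty$, so the Kochen--Stone lemma yields $\P(D_k\text{ i.o.})\ge(4/9)^\beta>0$.

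The remaining step, which I expect to be the main obstacle, is the zero--one law. Unlike on $\Z$ there is no countable family of i.i.d.\ coordinates to appeal to directly. I would use the independent restrictions $\mathcal X_M:=\mathcal X\cap([-M,M]\times(0,1))$ and $\mathcal X\cap([-M,M]^c\times(0,1))$, and show that $\{D_k\text{ i.o.}\}$ is measurable with respect to the tail $\sigma$-algebra $\bigcap_M\sigma(\mathcal X\cap([-M,M]^c\times(0,1)))$. Fix $M$. For all $k$ with $g_k>M$, the event $E_k$ depends only on $\mathcal X\cap A_k$, which lies outside $[-M,M]$. The event $B_k$ involves the points of $[-M,M]$ only through the requirement $R_v<\min(v-a,b-v)$. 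Since $\mathcal X_M$ is almost surely finite with finite radii, and the caps satisfy $\min(v-a,b-v)\ge 6g_k$ for $\abs v\le M$, this requirement holds for all large $k$ almost surely. Hence, up to a null set, $\{D_k\text{ i.o.}\}$ coincides with $\{D'_k\text{ i.o.}\}$, where $D'_k$ drops the caps on points in $[-M,M]$. The event $\{D'_k\text{ i.o.}\}$ is measurable with respect to the outside configuration. It is independent of $\mathcal X_M$ for every $M$, hence of $\sigma(\mathcal X)$ generated over all $M$, and therefore of itself. This gives $\P(D_k\text{ i.o.})\in\{0,1\}$, and by the Kochen--Stone bound it equals $1$.

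The care needed is with null sets and with completing $\sigma$-algebras. I would handle this by proving that $\P(\{D_k\text{ i.o.}\}\,\triangle\,\{D'_k\text{ i.o.}\})=0$, then applying Kolmogorov's law to the tail of the independent family $(\mathcal X\cap(\{u:n\le\abs u<n+1\}\times(0,1)))_{n\ge0}$.
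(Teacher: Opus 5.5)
Your proposal is correct and follows essentially the same route as the paper. The steps match one for one: the decomposition $D_k=E_k\cap B_k$, independence of $D_j$ and $E_k$, the comparison $\P(D_k)\ge(4/9)^{\beta}\P(E_k)$ by conditioning on the annulus, Kochen--Stone, and a zero--one law obtained by dropping the caps of $B_k$ on a central window, which is exactly the paper's $D^{(M)}_k$. The differences are cosmetic. The paper takes as its independent family the restrictions to the annuli $\mathcal S_n$ between $9g_n$ and $9g_{n+1}$ rather than unit annuli. In your self-independence variant, the event shown independent of every $\mathcal X_M$ should be $\{D_k\text{ i.o.}\}$ itself, since $D'_k$ depends on $M$.
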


\begin{proof}
For $j<k$ the event $D_j$ is measurable with respect
to the restriction of $\mathcal X$ to $[-9g_j,9g_j]\times(0,1)$, and $E_k$
with respect to the annulus $A_k$; the regions are disjoint
($9g_j\le9g_k/64<g_k$), so
$\P(D_j\cap D_k)\le\P(D_j\cap E_k)=\P(D_j)\P(E_k)$. Conditionally on the
restriction of $\mathcal X$ to $A_k$, on $\widetilde{\cG}_k$ the caps of
$B_k$ at the quiet block points hold, and what remains of $B_k$ is the void
event of a region with positions in $(-g_k,g_k)$, disjoint from $A_k$; its
probability is at least $(4/9)^{\beta}$ by \Cref{lem:containcont}. Hence
$\P(D_k)\ge(4/9)^{\beta}\P(E_k)$ and
\[
  \P(D_j\cap D_k)\ \le\ (9/4)^{\beta} \P(D_j) \P(D_k),\qquad j<k .
\]
Since $\sum_k\P(D_k)=\infty$ by \Cref{prop:scalecont}(ii), the Kochen--Stone
lemma \cite{KochenStone1964} gives
$\P(D_k\text{ i.o.})\ge(4/9)^{\beta}>0$.

Set $\mathcal S_0:=[-9g_1,9g_1]\times(0,1)$ and
$\mathcal S_n:=\big([-9g_{n+1},9g_{n+1}]\setminus[-9g_n,9g_n]\big)\times(0,1)$
for $n\ge1$. The restrictions $\xi_n:=\mathcal X\cap\mathcal S_n$ are
independent and together generate $\mathcal X$. Fix $n$ and put
$M:=9g_{n+1}$. For $k$ with $g_k>M$ let $D^{(M)}_k$ be defined as $D_k$,
except that the caps of $B_k$ are imposed only on points with position
outside $[-M,M]$; since the annulus $A_k$ also lies outside $[-M,M]$, the
event $D^{(M)}_k$ is measurable with respect to $\sigma(\xi_m:m>n)$. The
central region $[-M,M]\times(0,1)$ contains almost surely finitely many
points, each of finite radius, while the cap of $B_k$ at a position
$v\in[-M,M]$ is at least $8g_k-M\to\infty$; hence there is a random $K$
such that for all $k\ge K$ every central point satisfies its scale-$k$ cap,
and $\1_{D_k}=\1_{D^{(M)}_k}$ for all $k\ge K$. Consequently
$\{D_k\text{ i.o.}\}=\{D^{(M)}_k\text{ i.o.}\}$ almost surely, and the
latter belongs to $\sigma(\xi_m:m>n)$. Thus $\{D_k\text{ i.o.}\}$ lies, up
to null sets, in the tail $\sigma$-field
$\bigcap_n\sigma(\xi_m:m>n)$ of the independent sequence $(\xi_n)$, which is
trivial by Kolmogorov's zero--one law; completion preserves triviality.
Being of positive probability, the event has probability one.
\end{proof}

\begin{proof}[Proof of \Cref{thm:continuum}]
By the scaling isomorphism it suffices to treat $\lambda=1$ and $\beta<1$.
By \Cref{prop:scalesiocont} there is an event of probability one on which
$D_k$ occurs for infinitely many $k$; intersect it with the
full-probability event that $\mathcal X$ is locally finite, and fix a
realisation. For a point $\mathbf v=(v,t)\in\mathcal X$ choose $k$ with
$D_k$ and $g_k>\abs v$. By \Cref{prop:scalecont}(i),
$v\in(-g_k,g_k)\subseteq(i,i']$, and by \Cref{lem:confine} the component of
$\mathbf v$ consists of points with positions in
$(i,i']\subseteq(-9g_k,9g_k)$, hence is finite. This holds simultaneously
for every point of $\mathcal X$, so there is no infinite component.
\end{proof}

\begin{corollary}\label{cor:diametercont}
Let $\lambda=1$ and $\beta<1$. Almost surely every component is finite and
yet $\sup\{\operatorname{diam}C:\ C\ \text{a component}\}=\infty$, where
$\operatorname{diam}C$ is the diameter of the set of positions of $C$.
\end{corollary}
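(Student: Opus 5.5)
The proof mirrors that of \Cref{cor:diameter}. The two assertions are separate. For the first, I will use \Cref{thm:continuum} at $\lambda=1$, $\beta<1$: almost surely every component is finite.

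For the second, I will invoke \Cref{prop:edges-io}. It says that almost surely the origin is crossed by infinitely many edges $\{(x,t),(y,s)\}$ with $x\le 0<y$, and these edges are distinct as pairs of points of $\mathcal X$. The step needing care is turning ``infinitely many'' into ``unbounded length'', because positions are now continuous. Fix $L>0$. Every edge crossing the origin with length at most $L$ has both endpoints at positions in $[-L,L]$. Since $\mathcal X$ is almost surely locally finite, $[-L,L]\times(0,1)$ contains only finitely many points, hence only finitely many pairs. So only finitely many crossing edges have length $\le L$, and infinitely many crossing edges then force $\sup\{y-x\}=\infty$ over crossing edges. Intersecting the countably many events indexed by $L\in\N$ with the local-finiteness event keeps probability one.

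Alternatively, the proof of \Cref{prop:edges-io} gives the conclusion directly: at infinitely many scales $k$ there is an edge from $\mathcal L_k$ to $\mathcal R_k$, and such an edge has length at least $2^k$.

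Finally, the component containing such an edge contains both endpoints, so its diameter is at least $y-x$. The supremum over components is therefore almost surely infinite, while each component is finite by the first part. The only real obstacle is the local-finiteness step above, and it is routine.
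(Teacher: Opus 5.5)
Your proof is correct and takes the same route as the paper's. Finiteness comes from \Cref{thm:continuum}, infinitely many origin-crossing edges from \Cref{prop:edges-io}, local finiteness of $\mathcal X$ on $[-L,L]$ forces the edge lengths to be unbounded, and each such edge puts a component of at least that diameter; your alternative via the scale-$k$ edges from $\mathcal L_k$ to $\mathcal R_k$ (length at least $2^k$) is also valid and skips the local-finiteness step.
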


\begin{proof}
By \Cref{prop:edges-io} there are almost surely infinitely many
origin-crossing edges. An origin-crossing edge of length at most $L$ has
both endpoints in $[-L,L]$, which contains finitely many points; hence the
lengths of the origin-crossing edges are unbounded, and the component of
such an edge has diameter at least its length. The rest is
\Cref{thm:continuum}.
\end{proof}

\begin{remark}\label{rem:contlit}
\begin{enumerate}[(i)] 
\item  The model is the boundary case $\gamma=0$ of the preferential
attachment family recalled before \Cref{thm:adrcm}.  No statement of
\cite{GracarLuechtrathMoerters2021} applies to the model: the standing
assumption there is $\gamma\in(0,1)$, and the profile is assumed
regularly varying at infinity, which the indicator profile is not.
\Cref{thm:continuum} shows that all components are finite for $\beta<1$;
the constant $1$ coincides with the classical constant of
\Cref{rem:lrp}(iii), and by the numerical study of \Cref{sec:numerics} it
does not appear to be sharp. For the upper bound the point is not the constant.
In $d=1$ with a profile of decay $\delta\ge2$ (indicator profile
included) the existence of a supercritical phase is left open in
\cite{GracarLuechtrathMoerters2021} (Remarks~(ii) and~(iv) following
Theorem~1.1); the effective decay is the boundary value $2$ (see (ii)
below), and the discrete counterpart of the model never percolates, at
any density (\Cref{thm:main}), so a supercritical phase could equally
well have been absent. \Cref{thm:super} shows that it is present, and
hence $\beta_c\in(0,\infty)$; by \Cref{rem:superlit}(i) it rests on a
density that only the continuum can reach.
\item The exponent criteria of
\cite{GracarLuechtrathMoench2022} decide finiteness of the percolation
threshold in $d=1$ whenever a certain effective decay differs from $2$;
for our kernel and profile,
$\int_0^1\!\int_0^1\1[(s\vee t)n\le1] \mathrm ds \mathrm dt=n^{-2}$,
the pseudo-scale-invariant boundary case in which those criteria
are silent. \Cref{thm:super} decides this boundary case, by exploiting
structure (the $\min$-rule and the exact scale invariance of
\Cref{rem:copies}) rather than decay exponents.
\item   The exponent $\gamma$ of \Cref{prop:gamma} moves the model off
this boundary. Computed as in Remark~1.2(ii) of
\cite{GracarLuechtrathMoench2022}, the integral of (ii) is of order
$n^{-2/\gamma}$ for $\gamma>1$ and vanishes for large $n$ when
$\gamma<1$, so the effective decay is $2/\gamma<2$ on one side of the
critical exponent and infinite on the other. The criteria apply here:
the standing assumptions of \cite{GracarLuechtrathMoench2022} ask of the
profile only monotonicity and integrability, which the indicator profile
satisfies, and Corollary~2.6 there removes a technical condition for
Poisson points. They give $\beta_c<\infty$ for $\gamma>1$ and
$\beta_c=\infty$ for $\gamma<1$; \Cref{prop:gamma} recovers the second
conclusion by an elementary comparison and sharpens the first to
$\beta_c=0$.

The comparison extends \Cref{thm:continuum,thm:super} to other kernels:
for a kernel $g$ with indicator profile in $d=1$, if $g(t,s)\le t\vee s$
pointwise then every edge of our graph is an edge of the $g$-graph on
the same points, and the $g$-graph has an infinite component whenever
$\lambda\beta\ge31$; if $g(t,s)\ge t\vee s$ then the $g$-graph is a
subgraph of ours, with no infinite component whenever $\lambda\beta<1$.
The first case with $g(t,s)=(t\vee s)^{1-\gamma}(t\wedge s)^{\gamma}$ is
the supercritical half of \Cref{thm:adrcm}.

The case $\gamma<\tfrac12$ of Theorem~1.4 of
\cite{GracarLuechtrathMoench2022}, recalled in \Cref{sec:setup},
concerns profiles of polynomial decay $\delta>2$; \Cref{thm:adrcm}
covers these profiles as well and closes this case, with the
supercritical phase provided by \Cref{cor:adrcmclass}.
 
\end{enumerate}
\end{remark}

\section{A supercritical phase in the continuum}\label{sec:super}

We now prove \Cref{thm:super}. Throughout this section $\X$ is again a
Poisson point process on $\R\times(0,1)$ of intensity $\lambda$; by the
scaling isomorphism recalled at the beginning of \Cref{sec:continuum} we may
and do fix $\lambda=1$ and assume $\beta\ge31$. As in \Cref{sec:continuum},
a point $\x=(x,t)\in\X$ is identified with its position $x$ and its radius
$R_x=\beta/t\in(\beta,\infty)$, and $\{\x,\y\}\in E$ iff
$\abs{x-y}\le R_x\wedge R_y$.

The proof is a renormalisation, but not a coarse-graining of space as in
Newman and Schulman \cite{NewmanSchulman1986}: we decompose the
\emph{mark space} into dyadic bands. The restrictions of
$\X$ to distinct bands are independent, and each band is a scaled copy of the same dense, bounded-range (hence subcritical)
one-dimensional Gilbert-type graph. The assembly rests on two observations.
First, by the $\min$-rule the connection threshold between a band-$k$ point
and \emph{any} point of larger radius exceeds $\beta2^k$, so consecutive
bands glue deterministically wherever the coarser band is locally dense
(\Cref{lem:link}); no analogue of \Cref{prop:crossing} and no correlation
inequality is needed, and every probabilistic estimate in this section is
an exact product over disjoint regions of $\R\times(0,1)$. Second, scale invariance forces the
failure probability of any single-scale event to stay bounded away from $0$
uniformly in the scale, so no fixed nested sequence of windows can survive
at all scales (\Cref{rem:tower}); the renormalisation must branch in space
as well as in scale. The incidence structure of dyadic windows across all
scales is the binary tiling of the hyperbolic half-plane, truncated at a
floor, and the assembly becomes a Peierls argument for independent site
percolation on this tiling.

\subsection{Mark bands}\label{sec:bands}

The starting point is the following decomposition.
For $k\in\N_0$ let
\[
   \X_k:=\big\{\x\in\X:\ R_x\in(\beta2^k, \beta2^{k+1}]\big\}
\]
be the \emph{band-$k$ points}. Since $R_x>\beta$ for every point, each point
lies in exactly one band. In mark coordinates, $R_x\in(\beta2^k,\beta2^{k+1}]$
means $t\in[2^{-k-1},2^{-k})$, so $\X_k$ is the restriction of $\X$
to $\R\times[2^{-k-1},2^{-k})$; by the restriction Theorem
\cite{LastPenrose2017} the processes $(\X_k)_{k\ge0}$ are independent, and
the positions of $\X_k$ form a Poisson process on $\R$ of intensity
$2^{-k-1}$.

\begin{lemma}\label{lem:bandedge}
Let $\x,\y\in\X$ with $R_x>\beta2^k$, $R_y>\beta2^k$ and
$\abs{x-y}\le\beta2^k$. Then $\{\x,\y\}\in E$. In particular a band-$k$
point is joined to every point of band $j\ge k$ at distance at most
$\beta2^k$ from it.
\end{lemma}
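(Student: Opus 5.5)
The statement is a direct consequence of the radius form of the connection rule. I would unpack the definitions and then read off the second sentence from the definition of the bands.

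First, recall from \Cref{sec:rainbows} and the beginning of \Cref{sec:super} that $\{\x,\y\}\in E$ if and only if $\abs{x-y}\le R_x\wedge R_y$. Under the hypotheses, $R_x\wedge R_y>\beta2^k\ge\abs{x-y}$, so the rule holds and the edge is present. In mark coordinates this is the same as $(t\vee s)\abs{x-y}\le\beta$: both marks satisfy $t,s<2^{-k}$, so $(t\vee s)\abs{x-y}<2^{-k}\cdot\beta2^k=\beta$. I would include this equivalent form to tie the argument to the rule of \Cref{sec:setup}.

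For the second sentence, let $\x\in\X_k$ and $\y\in\X_j$ with $j\ge k$. By the definition of the bands, $R_x>\beta2^k$ and $R_y>\beta2^j\ge\beta2^k$. The first part then applies to every such $\y$ with $\abs{x-y}\le\beta2^k$, which gives the claim.

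There is no real obstacle here. The only point to check is that the lower endpoint of each band interval is excluded, so the inequalities $R>\beta2^k$ are strict, and that the monotonicity $2^j\ge2^k$ runs in the right direction. Both follow immediately from the definition $\X_k=\{R_x\in(\beta2^k,\beta2^{k+1}]\}$.
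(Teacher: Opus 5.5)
Your proposal is correct and is essentially the paper's proof, which is the one-line observation $\abs{x-y}\le\beta2^k<R_x\wedge R_y$. Your mark-coordinate restatement and your explicit deduction of the band clause from $\X_j\subseteq\{R>\beta2^j\}$ with $2^j\ge2^k$ are accurate additions but not needed.
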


\begin{proof}
$\abs{x-y}\le\beta2^k<R_x\wedge R_y$.
\end{proof}

\begin{remark}\label{rem:copies}
The map $(x,t)\mapsto(2^kx, 2^{-k}t)$ preserves both the intensity measure
on $\R\times(0,1)$ and the connection rule $(t\vee s)\abs{x-y}\le\beta$, and
it maps band $0$ to band $k$. Each band is therefore a dilated copy of the
same graph: a Gilbert-type graph whose connection thresholds lie in
$(\beta2^k,\beta2^{k+1}]$. A single band has bounded range in one dimension,
hence almost surely no infinite component, for \emph{every} $\beta$; but
for large $\beta$ it is dense at its own scale, with $\beta/2$ expected
points per unit of connection range. The construction below uses
this densely-subcritical structure, available independently at
every scale. Scale invariance also has a cost, made precise in
\Cref{rem:tower}: no event tied to a single scale can have probability
tending to $1$ as the scale grows.
\end{remark}

\subsection{Windows, runs and deterministic linkage}\label{sec:windows}

We now renormalise. A \emph{site} is a dyadic window at some scale,
declared open when the corresponding band occupies every cell of the
window. This subsection shows that, once the site events are
given, all connectivity is automatic: an open site carries a chain of
band-$k$ points spanning its window (\Cref{lem:run}), and the chains of
neighbouring open sites (side by side at the same scale, or a window
and its parent) always lie in the same component of the graph
(\Cref{lem:link}). The randomness of the entire construction is thereby
confined to the site events, which are independent and open with a
probability that does not depend on the scale (\Cref{lem:density}).

For $k\in\N_0$ and $i\in\Z$ define the \emph{scale-$k$ window}
\[
   \Lambda_{k,i}:=\big[ 8\beta2^k i,\ 8\beta2^k(i+1) \big),
\]
subdivided into sixteen \emph{cells} of length $\beta2^k/2$. The windows
are dyadically nested, $\Lambda_{k+1,j}=\Lambda_{k,2j}\cup\Lambda_{k,2j+1}$,
each cell of $\Lambda_{k+1,j}$ has length $\beta2^k$, and each child window
is the disjoint union of eight aligned parent cells. Define the \emph{site
event}
\[
   O_{k,i}:=\big\{\text{every cell of }\Lambda_{k,i}\text{ contains the
   position of a band-}k\text{ point}\big\},
\]
and write $P_{k,i}$ for the set of band-$k$ points with position in
$\Lambda_{k,i}$.

\begin{lemma}\label{lem:density}
The events $\{O_{k,i}:k\in\N_0, i\in\Z\}$ are independent, and for all
$k,i$,
\[
   \P(O_{k,i})=\big(1-e^{-\beta/4}\big)^{16}
   \ \ge\ 1-16e^{-\beta/4}\ =:\ 1-\eps(\beta) .
\]
For $\beta\ge31$ one has $\eps(\beta)\le2^{-7}$.
\end{lemma}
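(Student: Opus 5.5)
The plan is to reduce everything to disjointness of regions in $\R\times(0,1)$ and the Poisson void probability. The event $O_{k,i}$ depends only on $\X_k$ restricted to positions in $\Lambda_{k,i}$, that is, on $\X$ restricted to the rectangle $\Lambda_{k,i}\times[2^{-k-1},2^{-k})$. I would first check that these rectangles are pairwise disjoint over all $(k,i)$. For the same $k$ and distinct $i$ the windows $\Lambda_{k,i}$ are disjoint. For distinct $k$ the mark strips $[2^{-k-1},2^{-k})$ are disjoint. The restriction theorem (\cite{LastPenrose2017}, as already used in \Cref{sec:bands}) then gives that the restrictions of $\X$ to these rectangles are independent. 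Independence of the family $\{O_{k,i}\}$ follows, in the sense that any finite subfamily is independent.

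Next I would compute $\P(O_{k,i})$. The sixteen cells of $\Lambda_{k,i}$ each have length $8\beta2^k/16=\beta2^k/2$, and they are disjoint, so the sixteen events "cell contains a band-$k$ point" are independent. The positions of $\X_k$ form a Poisson process of intensity $2^{-k-1}$, so a cell carries mean $2^{-k-1}\cdot\beta2^k/2=\beta/4$. Hence each cell is nonempty with probability $1-e^{-\beta/4}$, which gives the exact product $(1-e^{-\beta/4})^{16}$. This value is independent of $k$ and $i$, which is the scale invariance of \Cref{rem:copies} made explicit.

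For the lower bound, Bernoulli's inequality $(1-x)^{16}\ge1-16x$ on $[0,1]$ gives $\P(O_{k,i})\ge1-16e^{-\beta/4}$. For the numerical claim, the goal is $16e^{-\beta/4}\le2^{-7}$, which is equivalent to $e^{\beta/4}\ge2^{11}$, i.e.\ $\beta\ge44\log2\approx30.50$. This holds for $\beta\ge31$, and since $\eps(\beta)$ is decreasing in $\beta$ the bound extends to all $\beta\ge31$.

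There is no real obstacle here. The only points needing care are the cell-length arithmetic, so that the mean per cell is exactly $\beta/4$, and confirming that $31$ exceeds $44\log 2$. I would record $44\log2<30.51$ explicitly.
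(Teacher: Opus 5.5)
Your proposal is correct and follows the paper's proof essentially step by step: independence from the pairwise disjoint regions $\Lambda_{k,i}\times[2^{-k-1},2^{-k})$ via the restriction theorem, and independent Poisson cell counts of mean $\beta/4$ giving the exact product. Your Bernoulli inequality is the paper's union bound, and your check $31>44\log 2$ is the paper's $31/4>11\log 2$.
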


\begin{proof}
$O_{k,i}$ is measurable with respect to the restriction of $\X$ to
$\Lambda_{k,i}\times[2^{-k-1},2^{-k})$, and these regions are pairwise
disjoint over all $(k,i)$, which gives the independence. The number of
band-$k$ points in a cell is Poisson with mean
$2^{-k-1}\cdot\beta2^k/2=\beta/4$, independently over the sixteen cells,
giving the product formula; the union bound gives the lower bound. Finally
$31/4>11\log2$, so $e^{-31/4}<2^{-11}$ and
$\eps(31)\le16\cdot2^{-11}=2^{-7}$.
\end{proof}

\begin{lemma}\label{lem:run}
On $O_{k,i}$: the positions of consecutive points of $P_{k,i}$ differ by
less than $\beta2^k$; all points of $P_{k,i}$ lie in one connected
component of the graph; and the leftmost and rightmost points of $P_{k,i}$
lie within $\beta2^k/2$ of the respective endpoints of $\Lambda_{k,i}$.
\end{lemma}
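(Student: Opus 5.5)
The proof is deterministic given $O_{k,i}$; it uses only the cell structure of $\Lambda_{k,i}$ and \Cref{lem:bandedge}. Label the sixteen cells $I_1,\dots,I_{16}$ from left to right. Each is a half-open interval of length $\beta2^k/2$, and on $O_{k,i}$ each contains the position of at least one band-$k$ point. In particular $P_{k,i}$ is non-empty, and it is finite because $\X$ is locally finite.

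The first claim is that consecutive points are close. Let $x<y$ be the positions of two consecutive points of $P_{k,i}$, meaning that no point of $P_{k,i}$ has position strictly between them. Let $I_j$ be the cell containing $x$. If $y\in I_j$, then $y-x<\beta2^k/2$, since the cell is half-open. Otherwise $x$ is the rightmost point of $P_{k,i}$ in $I_j$. Then $j\le15$, because some point lies to the right of $x$ inside the window, and cell $I_{j+1}$ contains a point of $P_{k,i}$. Consecutiveness forces $y$ to be the leftmost point of $I_{j+1}$. Both $x$ and $y$ then lie in the half-open interval $I_j\cup I_{j+1}$, which has length $\beta2^k$, so $y-x<\beta2^k$. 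Ties in position have probability zero and can be ignored.

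The second claim, connectivity, now follows at once. Every band-$k$ point has radius $R>\beta2^k$, and consecutive points are at distance less than $\beta2^k$. By \Cref{lem:bandedge}, consecutive points are therefore joined by an edge. Ordering $P_{k,i}$ by position gives a path through all of its points, so $P_{k,i}$ lies in a single component.

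The third claim concerns the endpoints. The leftmost point of $P_{k,i}$ lies in $I_1$, which is nonempty on $O_{k,i}$, so its position is within $\beta2^k/2$ of $8\beta2^k i$. Likewise the rightmost point lies in $I_{16}$, so it is within $\beta2^k/2$ of the right endpoint $8\beta2^k(i+1)$; the right endpoint is excluded from the half-open window, so the inequality is strict.

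None of these steps is a real obstacle. The one point that needs care is the half-open convention, which turns "length $\beta2^k$" into the strict bound "less than $\beta2^k$" that \Cref{lem:bandedge} needs, although that lemma only requires distance at most $\beta2^k$ anyway.
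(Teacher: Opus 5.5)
Your proof is correct and follows the paper's argument. Every cell strictly between two consecutive points of $P_{k,i}$ is occupied on $O_{k,i}$, so the two points lie in the same or adjacent cells and their spacing is below $2\cdot\beta2^k/2=\beta2^k$; \Cref{lem:bandedge} then chains them, and the extreme points lie in the first and last cells (your same-cell versus next-cell split is just a more explicit rendering of the paper's observation that the cell indices satisfy $m'\le m+1$).
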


\begin{proof}
Let $x<y$ be the positions of two consecutive points of $P_{k,i}$, lying in
the cells with indices $m\le m'$. Every cell with index strictly between
$m$ and $m'$ is contained in $(x,y)$ and, on $O_{k,i}$, would contain the
position of a point of $P_{k,i}$; hence $m'\le m+1$ and
$y-x<2\cdot\beta2^k/2=\beta2^k$. By \Cref{lem:bandedge} consecutive points
are joined, so $P_{k,i}$ forms a chain. The extreme points lie in the first
and the last cell.
\end{proof}

\begin{lemma}\label{lem:link}
Let $k\in\N_0$, $i\in\Z$ and $j:=\lfloor i/2\rfloor$.
\begin{enumerate}
\item[(i)] On $O_{k,i}\cap O_{k,i+1}$, the sets $P_{k,i}$ and $P_{k,i+1}$
  lie in the same connected component of the graph.
\item[(ii)] On $O_{k,i}\cap O_{k+1,j}$, the sets $P_{k,i}$ and $P_{k+1,j}$
  lie in the same connected component of the graph.
\end{enumerate}
\end{lemma}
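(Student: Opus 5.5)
For both parts, the plan is to exhibit a single edge joining a point of one chain to a point of the other. Once that edge is found, \Cref{lem:run} finishes the argument: on the relevant site event all points of each $P$ lie in one component.

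For (i), take the rightmost point $x$ of $P_{k,i}$ and the leftmost point $y$ of $P_{k,i+1}$. By \Cref{lem:run}, $x$ lies within $\beta2^k/2$ to the left of the common endpoint $8\beta2^k(i+1)$. Likewise $y$ lies within $\beta2^k/2$ to the right of it, since it sits in the first cell of $\Lambda_{k,i+1}$. Both cells are half-open of length $\beta2^k/2$, so $|x-y|<\beta2^k$. Both points have radius $>\beta2^k$, so \Cref{lem:bandedge} gives the edge $\{x,y\}$.

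For (ii), we use the nesting $\Lambda_{k+1,j}=\Lambda_{k,2j}\cup\Lambda_{k,2j+1}$, so $\Lambda_{k,i}\subseteq\Lambda_{k+1,j}$. Moreover $\Lambda_{k,i}$ is a union of eight cells of $\Lambda_{k+1,j}$, each of length $\beta2^k$. On $O_{k+1,j}$, each of these cells contains a band-$(k+1)$ point $y$. On $O_{k,i}$, the chain $P_{k,i}$ spans $\Lambda_{k,i}$ with consecutive gaps $<\beta2^k$, and its extreme points lie within $\beta2^k/2$ of the window ends. Fix such a $y$ inside $\Lambda_{k,i}$. Then some point $x\in P_{k,i}$ satisfies $|x-y|<\beta2^k$. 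To see this, note that if $y$ lies between two consecutive chain points, the nearer one is at distance $<\beta2^k/2$. If instead $y$ lies before the leftmost (or after the rightmost) chain point, that extreme point is within $\beta2^k/2$ of the window end and hence of $y$. Now $R_x>\beta2^k$ and $R_y>\beta2^{k+1}>\beta2^k$, so \Cref{lem:bandedge} yields the edge $\{x,y\}$, which connects $P_{k,i}$ with $P_{k+1,j}$.

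The only delicate point is the boundary bookkeeping: the cells are half-open, and \Cref{lem:run} provides strict inequalities. These together give distances at most $\beta2^k$, which is exactly what \Cref{lem:bandedge} requires. I expect no real obstacle. The step to check most carefully is that the band-$(k+1)$ point can be chosen in a cell lying inside $\Lambda_{k,i}$, which holds because the cells of $\Lambda_{k+1,j}$ align with the child windows.
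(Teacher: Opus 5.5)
Your proposal is correct and follows the paper's argument. In (i) the two extreme points beside the shared window endpoint are joined by \Cref{lem:bandedge}. In (ii) a band-$(k+1)$ point in an aligned parent cell inside $\Lambda_{k,i}$ is joined to a nearby point of the chain $P_{k,i}$. The only difference is that the paper fixes the second parent cell $J=[a+\beta2^k,a+2\beta2^k)$, which forces $y$ to lie between two consecutive chain points and so avoids your separate end cases.
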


\begin{proof}
Both parts are illustrated in \Cref{fig:linkage}.

(i) The rightmost point of $P_{k,i}$ and the leftmost point of $P_{k,i+1}$
lie within $\beta2^k/2$ of the common window endpoint, hence within
$\beta2^k$ of each other, and both have radius exceeding $\beta2^k$;
\Cref{lem:bandedge} joins them.

(ii) Write $a$ for the left endpoint of $\Lambda_{k,i}$. Since
$\Lambda_{k,i}$ is a union of eight aligned parent cells, the interval
$J:=[a+\beta2^k, a+2\beta2^k)$ is a cell of $\Lambda_{k+1,j}$, and on
$O_{k+1,j}$ it contains the position $y$ of a point $\y\in P_{k+1,j}$. By
\Cref{lem:run} the leftmost point of $P_{k,i}$ has position at most
$a+\beta2^k/2<y$ and the rightmost has position at least
$a+8\beta2^k-\beta2^k/2>y$, so $y$ lies between the positions
$x_-\le y\le x_+$ of two consecutive points of $P_{k,i}$, and
$y-x_-\le x_+-x_-<\beta2^k$. Since $R_y>\beta2^{k+1}>\beta2^k$ and the
point at $x_-$ has radius exceeding $\beta2^k$, \Cref{lem:bandedge} joins
them.
\end{proof}

\begin{figure}[ht]\centering
\begin{tikzpicture}[xscale=0.79]
  \fill[orange!12] (1,-0.18) rectangle (2,1.32);
  \node[orange!60!black] at (1.5,-0.38) {\footnotesize $J$};
  \draw[zline] (-0.4,0) -- (16.7,0);
  \foreach \x in {0,8,16}{\draw[gray!80, thick] (\x,-0.22) -- (\x,0.22);}
  \foreach \x in {0.5,1,...,15.5}{\draw[gray!35] (\x,-0.09) -- (\x,0.09);}
  \foreach \m in {0,...,30}{
    \pgfmathsetmacro\pa{0.5*\m+0.25+0.07*(-1)^\m}
    \pgfmathsetmacro\pb{0.5*(\m+1)+0.25+0.07*(-1)^(\m+1)}
    \draw[teal!55!black, very thin] (\pa,0) to[bend left=55] (\pb,0);}
  \foreach \m in {0,...,31}{
    \pgfmathsetmacro\px{0.5*\m+0.25+0.07*(-1)^\m}
    \fill (\px,0) circle (1.05pt);}
  \draw[zline] (-0.4,1.15) -- (16.7,1.15);
  \foreach \x in {0,16}{\draw[gray!80, thick] (\x,0.93) -- (\x,1.37);}
  \foreach \x in {1,...,15}{\draw[gray!35] (\x,1.06) -- (\x,1.24);}
  \draw[gray!70, {Stealth[length=1.4mm]}-{Stealth[length=1.4mm]}]
     (11,0.30) -- (11.5,0.30) node[midway,above=0.5pt]{\scriptsize $\beta2^k\!/2$};
  \draw[gray!70, {Stealth[length=1.4mm]}-{Stealth[length=1.4mm]}]
     (11,0.80) -- (12,0.80) node[midway,above=0.5pt]{\scriptsize $\beta2^k$};
  \foreach \m in {0,...,14}{
    \pgfmathsetmacro\qa{\m+0.5+0.12*(-1)^\m}
    \pgfmathsetmacro\qb{(\m+1)+0.5+0.12*(-1)^(\m+1)}
    \draw[blue!45, very thin] (\qa,1.15) to[bend left=35] (\qb,1.15);}
  \foreach \m in {0,...,15}{
    \pgfmathsetmacro\qx{\m+0.5+0.12*(-1)^\m}
    \fill[blue!65!black] (\qx,1.15) circle (1.3pt);}
  \draw[escape] (1.38,1.15) -- (1.32,0);
  \node[orange!85!black] at (3.6,0.62) {\footnotesize $\abs{y-x_-}<\beta2^k$};
  \draw[forced] (7.68,0) to[bend left=55] (8.32,0);
  \node[red!75!black] at (8,0.52) {\footnotesize $<\beta2^k$};
  \node at (-1.0,0) {\footnotesize $\X_k$};
  \node at (-1.0,1.15) {\footnotesize $\X_{k+1}$};
  \draw[brc] (0,-0.6) -- (8,-0.6) node[midway,below=4pt]{\footnotesize $\Lambda_{k,2j}$};
  \draw[brc] (8,-0.6) -- (16,-0.6) node[midway,below=4pt]{\footnotesize $\Lambda_{k,2j+1}$};
  \draw[decorate, decoration={brace, amplitude=4pt}] (0,1.55) -- (16,1.55)
     node[midway,above=4pt]{\footnotesize $\Lambda_{k+1,j}$};
\end{tikzpicture}
\caption{Two
scale-$k$ windows inside their parent window; band-$k$ points on the lower
line (one per cell of length $\beta2^k/2$ on the site events), band-$(k+1)$
points on the upper line (one per parent cell of length $\beta2^k$). Both
cell grids are drawn: the parent cells are twice the child cells,
so eight aligned parent cells make up each child window.
Consecutive band-$k$ points are within $\beta2^k$ of each other and chain
into a run (teal); the runs of adjacent windows join across the boundary
(red); and the band-$(k+1)$ point in the parent cell $J$ lies inside the
child run's span, hence within $\beta2^k$ of a run point (orange). All
these edges are deterministic given the bands: the two radii exceed
$\beta2^k$ and the distance does not.}
\label{fig:linkage}
\end{figure}

\begin{remark}
Part (ii) is where the $\min$-rule, the obstacle of the subcritical part,
works in our favour: the connection threshold between a band-$k$ point and
a band-$(k+1)$ point is governed by the \emph{smaller} of the two radii,
which still exceeds $\beta2^k$. A band-$k$ run therefore absorbs every
larger-radius point inside its span, and formation of the runs is the only
random ingredient of the construction.
\Cref{prop:crossing} would provide a probabilistic substitute for
(ii); it is not needed.
\end{remark}

\subsection{The renormalisation graph: a floored binary tiling}\label{sec:tiling}

Let $\mathsf H$ be the graph with vertex set
$V:=\{(k,i):k\in\N_0,\ i\in\Z\}$ and the edges
\[
  \{(k,i),(k,i+1)\}\quad\text{(horizontal)},\qquad
  \{(k,i),(k+1,\lfloor i/2\rfloor)\}\quad\text{(vertical)} .
\]
Both families are indexed by all of $V$, so each vertical edge is listed
once, from its lower endpoint: the vertex $(k,i)$ acquires one edge
upwards, to its parent $(k+1,\lfloor i/2\rfloor)$, and, if $k\ge1$, one
downwards to each of its two children $(k-1,2i)$ and $(k-1,2i+1)$.
Realising the vertex $(k,i)$ as the tile
$T_{k,i}:=\ov{\Lambda_{k,i}}\times[2^k,2^{k+1}]$ in the upper half-plane,
$\mathsf H$ is the adjacency graph of the binary tiling (sometimes called
the B\"or\"oczky tiling) of the hyperbolic half-plane, truncated at the
floor $k=0$: two tiles are $\mathsf H$-adjacent iff their boundaries share
a segment (see \Cref{fig:tiling}). Let $\mathsf H^\st$ be $\mathsf H$ together with the
\emph{diagonal} edges
\[
   \big\{(k,2j-1),(k+1,j)\big\}\quad\text{and}\quad
   \big\{(k,2j),(k+1,j-1)\big\},\qquad k\in\N_0,\ j\in\Z;
\]
two tiles are $\mathsf H^\st$-adjacent iff their boundaries share at least
a point. Site percolation on the unfloored tiling ($k\in\Z$) falls within
the theory of nonamenable planar graphs \cite{BenjaminiSchramm2001}; the
floor makes $\mathsf H$ amenable and that theory inapplicable, but a
Peierls argument suffices at the site densities we have.

\begin{figure}[ht]\centering
\begin{tikzpicture}[xscale=0.8, yscale=0.95]
  \foreach \i in {0,...,15}{\draw[gray!30] (\i,0) rectangle (\i+1,0.85);}
  \foreach \i in {0,...,7}{\draw[gray!30] (2*\i,0.85) rectangle (2*\i+2,1.8);}
  \foreach \i in {0,...,3}{\draw[gray!30] (4*\i,1.8) rectangle (4*\i+4,2.85);}
  \foreach \i in {0,...,1}{\draw[gray!30] (8*\i,2.85) rectangle (8*\i+8,4.0);}
  \draw[very thick] (0,0) -- (16,0);
  \node[below] at (8,0) {\footnotesize floor: $k=0$, radii in $(\beta,2\beta]$};
  \draw[gray!45, dashed] (0,4.0) rectangle (16,5.05);
  \draw[gray!75, dashed] (4,3.425) -- (8,4.52);
  \draw[gray!75, dashed] (12,3.425) -- (8,4.52);
  \fill[gray!60] (8,4.52) circle (1.3pt);
  \node[gray!70!black] at (8,5.42) {$\vdots$};
  \draw[-{Stealth[length=2mm]}] (-0.9,-0.15) -- (-0.9,5.55)
     node[above]{\footnotesize $k$};
  \node[left] at (-0.9,0.425) {\footnotesize $0$};
  \node[left] at (-0.9,1.325) {\footnotesize $1$};
  \node[left] at (-0.9,2.325) {\footnotesize $2$};
  \node[left] at (-0.9,3.425) {\footnotesize $3$};
  \node[left] at (-0.9,4.52) {\footnotesize $4$};
  \fill[teal!18] (0.5,0.425) -- (1.5,0.425) -- (1,1.325) -- cycle;
  \fill[blue!10] (1.5,0.425) -- (2.5,0.425) -- (3,1.325) -- (1,1.325) -- cycle;
  \foreach \i in {0,...,14}{\draw[black!75]
     (\i+0.5,0.425) to[bend right=28] (\i+1.5,0.425);}
  \foreach \i in {0,...,6}{\draw[black!75]
     (2*\i+1,1.325) to[bend right=22] (2*\i+3,1.325);}
  \foreach \i in {0,...,2}{\draw[black!75]
     (4*\i+2,2.325) to[bend right=16] (4*\i+6,2.325);}
  \draw[black!75] (4,3.425) to[bend right=11] (12,3.425);
  \foreach \i in {0,...,15}{
    \pgfmathsetmacro\pj{2*floor(\i/2)+1}
    \draw[black!75] (\i+0.5,0.425) -- (\pj,1.325);}
  \foreach \i in {0,...,7}{
    \pgfmathsetmacro\pj{4*floor(\i/2)+2}
    \draw[black!75] (2*\i+1,1.325) -- (\pj,2.325);}
  \foreach \i in {0,...,3}{
    \pgfmathsetmacro\pj{8*floor(\i/2)+4}
    \draw[black!75] (4*\i+2,2.325) -- (\pj,3.425);}
  \begin{scope}
    \clip (0,-0.5) rectangle (16,5.6);
    \draw[black!75, dashed] (0.5,0.425) to[bend left=28] (-0.5,0.425);
    \draw[black!75, dashed] (15.5,0.425) to[bend right=28] (16.5,0.425);
    \draw[black!75, dashed] (1,1.325) to[bend left=22] (-1,1.325);
    \draw[black!75, dashed] (15,1.325) to[bend right=22] (17,1.325);
    \draw[black!75, dashed] (2,2.325) to[bend left=16] (-2,2.325);
    \draw[black!75, dashed] (14,2.325) to[bend right=16] (18,2.325);
    \draw[black!75, dashed] (4,3.425) to[bend left=11] (-4,3.425);
    \draw[black!75, dashed] (12,3.425) to[bend right=11] (20,3.425);
    \draw[black!75, dashed] (8,4.52) to[bend left=8] (-8,4.52);
    \draw[black!75, dashed] (8,4.52) to[bend right=8] (24,4.52);
  \end{scope}
  \draw[innerarch] (0.5,0.425) to[bend right=28] (1.5,0.425);
  \draw[innerarch] (1.5,0.425) -- (1,1.325) -- (0.5,0.425);
  \draw[outerarch] (1.5,0.425) to[bend right=28] (2.5,0.425);
  \draw[outerarch] (1,1.325) to[bend right=22] (3,1.325);
  \draw[outerarch] (1.5,0.425) -- (1,1.325);
  \draw[outerarch] (2.5,0.425) -- (3,1.325);
  \draw[excluded] (1.5,0.425) -- (3,1.325);
  \draw[excluded] (2.5,0.425) -- (1,1.325);
  \foreach \i in {0,...,15}{\fill (\i+0.5,0.425) circle (1.3pt);}
  \foreach \i in {0,...,7}{\fill (2*\i+1,1.325) circle (1.3pt);}
  \foreach \i in {0,...,3}{\fill (4*\i+2,2.325) circle (1.3pt);}
  \foreach \i in {0,...,1}{\fill (8*\i+4,3.425) circle (1.3pt);}
  \node[below=2pt] at (0.5,0.425) {\scriptsize $\0$};
\end{tikzpicture}
\caption{Vertices are the dyadic windows, drawn as the tiles
$T_{k,i}$ of the binary tiling of the half-plane (light grey rectangles),
truncated at the floor $k=0$; the graph is drawn in black on top of the
tessellation, with an edge whenever two tiles share a boundary segment.
Horizontal edges, between siblings (same parent) and between cousins
(adjacent parents) alike, are drawn as slight arcs as a stylistic choice to distinguish them
from the tile borders; vertical (parent--child) edges are straight
segments. Dashed black arcs at the sides are edges to vertices outside
the field of view. The bounded faces are
triangles (a parent with its two children, shaded teal) and quadrilaterals
(two adjacent parents with the two children flanking a dyadic corner,
shaded blue); $\mathsf H^\st$ adds the two diagonals (dashed red) of every
quadrilateral face. The tiling is truncated below at the floor $k=0$ but
continues upward indefinitely, one tile per dyadic window at every scale
$k\in\N_0$ (dashed grey: the scale-$4$ tile). A site $(k,i)$ is declared
open iff $O_{k,i}$ occurs.}
\label{fig:tiling}
\end{figure}

\begin{lemma}\label{lem:tiling}
$\mathsf H$ and $\mathsf H^\st$ are connected and locally finite, with
maximal degrees $5$ and $8$ respectively, and:
\begin{enumerate}
\item[(i)] $\mathsf H$ has exactly one end: for every finite $F\subseteq V$
  the graph $\mathsf H-F$ has exactly one infinite component;
\item[(ii)] $\mathsf H$ is planar, and the boundaries of its bounded faces
  are exactly the triangles $\{(k,2j),(k,2j+1),(k+1,j)\}$ and the
  quadrilaterals $\{(k,2j-1),(k,2j),(k+1,j),(k+1,j-1)\}$, $k\in\N_0$,
  $j\in\Z$; the edges of $\mathsf H^\st\setminus\mathsf H$ are exactly the
  diagonals of the quadrilateral faces.
\end{enumerate}
\end{lemma}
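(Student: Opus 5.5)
We prove \Cref{lem:tiling} directly from the definition of $\mathsf H$, using the tile realisation $T_{k,i}=\ov{\Lambda_{k,i}}\times[2^k,2^{k+1}]$ for the geometric statements. The order is: degrees and local finiteness, connectivity, planarity and the face structure (ii), then one-endedness (i). The last step is the main obstacle.

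\emph{Degrees and connectivity.} The vertex $(k,i)$ has two horizontal neighbours $(k,i\pm1)$ and one parent $(k+1,\lfloor i/2\rfloor)$. If $k\ge1$ it also has two children $(k-1,2i)$ and $(k-1,2i+1)$. So the degree is at most $5$, with $3$ on the floor. In $\mathsf H^\st$ each vertex additionally lies on the diagonals of the quadrilateral faces at the two dyadic corners it touches: one diagonal going upward and, if $k\ge1$, two going downward to $(k-1,2i-1)$ and $(k-1,2i+2)$. The degree is therefore at most $8$, and both graphs are locally finite. For connectivity, any two vertices can be joined by climbing parent edges from each: after enough levels their ancestors lie in a common row, and a horizontal path finishes the connection. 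This already connects $\mathsf H$, hence also $\mathsf H^\st\supseteq\mathsf H$.

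\emph{Planarity and faces (ii).} The tiles have disjoint interiors and cover $\R\times[1,\infty)$. Place each vertex at the centre of its tile and draw each edge through the common boundary segment of the two tiles. Tiles in one row share vertical sides, and a tile shares half of its top side with its parent, so this embedding is planar. It is the dual of the tiling, restricted to segment-adjacency. The bounded faces of this dual correspond to the interior vertices of the tiling, that is, the points where at least three tiles meet. There are two kinds of such points. At the midpoint of a tile's bottom side, the tile and its two children meet, giving a triangle. At the dyadic point $(8\beta2^{k}\cdot 2j,\,2^{k+1})$, where two row-$k$ tiles meet under the junction of two row-$(k+1)$ tiles, four tiles meet in a quadrilateral $\{(k,2j-1),(k,2j),(k+1,j),(k+1,j-1)\}$. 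In this quadrilateral the pairs $(k,2j-1),(k+1,j)$ and $(k,2j),(k+1,j-1)$ meet only at that corner point. These are exactly the listed diagonal edges, which gives the last claim of (ii). The floor contributes no bounded faces, because the floor edges bound the unbounded face.

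\emph{One end (i).} Given finite $F$, choose $K$ larger than every level occurring in $F$, and choose $N$ larger than $|i|+1$ for every $(k,i)\in F$, rescaled appropriately. The set $U$ of vertices at levels $\ge K$ avoids $F$ and is connected by the climbing argument above, so it lies in one infinite component. It remains to show that every infinite component of $\mathsf H-F$ meets $U$. An infinite component contains vertices of arbitrarily large $|i|$ at some level, or of arbitrarily high level, because each level-bounded region of bounded horizontal extent is finite. From a vertex that is far horizontally from $F$, the ancestor chain stays horizontally far from $F$ at every level, since ancestors only contract positions by a factor $2$ and $F$ sits in a bounded box. This chain therefore avoids $F$ and reaches $U$. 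The care needed is choosing ``far'' in scaled coordinates $8\beta2^k i$, so that the ancestor windows of a vertex outside a large interval never intersect the windows of $F$. Taking the vertex's window disjoint from the convex hull of the windows of $F$, together with nestedness, makes the ancestor windows disjoint from those of $F$ until they are at level $\ge K$.
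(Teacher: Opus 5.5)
Your proposal is correct and follows the paper's proof essentially step for step. It uses the same degree count and reads the bounded faces off the same tiling corners where three or four tiles meet. For (i) it gives the same argument: the levels $\ge K$ form a connected set avoiding $F$, and the ancestor chain of every vertex far enough from $F$ horizontally reaches that set.

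The only imprecision is in your last sentence. Disjointness of the starting window from the hull of the $F$-windows does not make the ancestor windows disjoint from all $F$-windows, since a parent window can contain a lower-level $F$-window. It does give what you need, because an ancestor lying in $F$ would have a window containing the starting window, which contradicts that disjointness.
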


\begin{proof}
Degrees: a vertex $(k,i)$ with $k\ge1$ has two horizontal neighbours, one
parent and two children, so $\deg_{\mathsf H}\le5$; in $\mathsf H^\st$ it
gains exactly one upward diagonal (to $(k+1,\tfrac{i+1}2)$ for odd $i$, to
$(k+1,\tfrac i2-1)$ for even $i$) and two downward diagonals (to
$(k-1,2i-1)$ and $(k-1,2i+2)$), so $\deg_{\mathsf H^\st}\le8$. Vertices on
the floor $k=0$ have no children and no downward diagonals.

(ii) In the tiling, at least three tiles meet only at the tile corners on
the horocycles $y=2^{k+1}$, and these are of two types (the two kinds of
face are shaded in \Cref{fig:tiling}): at
$x=(2j+1)2^k$ (the midpoint of the lower edge of $T_{k+1,j}$) the three
tiles $T_{k,2j},T_{k,2j+1},T_{k+1,j}$ meet, giving a triangular face; at
$x=j2^{k+1}$ (a dyadic corner) the four tiles
$T_{k,2j-1},T_{k,2j},T_{k+1,j-1},T_{k+1,j}$ meet, giving a quadrilateral
face whose sides are the two horizontal edges and the two vertical edges
between them, its diagonals are precisely the two added edges of
$\mathsf H^\st$ at this corner. Corners on the floor line $y=1$ lie on the
unbounded face.

(i) Let $F\subseteq\{(k,i):k<K,\ \abs i<M\}$ with $M\ge1$. The set
$T:=\{(k,i):k\ge K\}$ is connected in $\mathsf H-F$: from any vertex climb
along vertical edges, and two vertices of $T$ have ancestors at a common
scale whose indices differ by at most one, where they join horizontally.
Any $(k,i)$ with $k<K$ and $\abs i\ge2M2^{K}$ reaches $T$ along its
ancestor path, whose indices have absolute value at least
$\abs i/2^{K}-1\ge2M-1\ge M$, hence avoid $F$. So all vertices outside the
finite set $F\cup\{(k,i):k<K,\abs i<2M2^K\}$ lie in one component of
$\mathsf H-F$, and every infinite component contains such vertices; hence
the infinite component is unique.
\end{proof}

By \Cref{lem:density,lem:link}, \Cref{thm:super} has been reduced to a
statement about independent site percolation on $\mathsf H$ at high
density. High density alone is not enough (\Cref{rem:tower} below
explains why the naive strategy of following one open site per scale
fails, and why the horizontal edges of $\mathsf H$ must carry part of the
load), so we argue by a Peierls contour bound. Its deterministic core is
the following lemma. Given $\omega\in\{0,1\}^{V}$, call a vertex
\emph{open} if $\omega_v=1$ and \emph{closed} otherwise; open clusters are
the connected components of the subgraph of $\mathsf H$ induced by the
open vertices. Write $\0:=(0,0)$.

\begin{lemma}\label{lem:contour}
Let $\omega\in\{0,1\}^V$, suppose $\0$ is open and its open cluster $C$ is
finite. Then there is a finite set $\Gamma\subseteq V$ with
$s:=\abs\Gamma\ge1$ such that:
\begin{enumerate}
\item[(i)] every vertex of $\Gamma$ is closed and $\mathsf H$-adjacent to
  $C$;
\item[(ii)] every infinite self-avoiding $\mathsf H$-path started at $\0$
  meets $\Gamma$;
\item[(iii)] $\Gamma$ is connected in $\mathsf H^\st$;
\item[(iv)] $(k^\ast,0)\in\Gamma$ for some $0\le k^\ast\le s-1$.
\end{enumerate}
\end{lemma}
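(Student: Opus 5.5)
The plan is to take $\Gamma$ to be the \emph{outer vertex boundary} of $C$: the set of closed vertices that are $\mathsf H$-adjacent to $C$ and that can be reached from infinity in $\mathsf H$ without passing through $C$. Precisely, let $\partial C$ be the set of vertices of $V\setminus C$ that are $\mathsf H$-adjacent to $C$. By the definition of an open cluster, every such vertex is closed. Since $C$ is finite and $\mathsf H$ is locally finite, $\partial C$ is finite. By \Cref{lem:tiling}(i), $\mathsf H-C$ has exactly one infinite component $\mathcal U$. Set $\Gamma:=\partial C\cap\mathcal U$. Then (i) holds by construction.

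For (ii), take an infinite self-avoiding path from $\0\in C$. It eventually leaves the finite set $C$ for good, and its tail lies in an infinite connected part of $\mathsf H-C$, hence in $\mathcal U$. Consider the last time the path visits $C$. The next vertex is adjacent to $C$, lies outside $C$, and is joined to the tail without revisiting $C$, so it lies in $\mathcal U$ and therefore in $\Gamma$. The same argument shows $\Gamma\neq\emptyset$, so $s\ge1$.

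For (iv), use the vertical column: the path $(0,0),(1,0),(2,0),\dots$ consists of parent edges, since $\lfloor 0/2\rfloor=0$. It is self-avoiding and infinite, so by (ii) it meets $\Gamma$. To bound the height, take the first index $k^\ast$ with $(k^\ast,0)\notin C$. Then $(k^\ast,0)$ is adjacent to $(k^\ast-1,0)\in C$, and the column above it avoids $C$? Not necessarily, so instead I would argue as follows. Let $k^\ast$ be the largest $k$ with $(k,0)\in\Gamma$ on the column segment reached after the last visit to $C$; this is exactly the vertex produced in (ii). The vertices $(k,0)$ with $k<k^\ast$ are either in $C$ or in $\partial C$. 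Each one in $\partial C$ is adjacent to $C$ and, via the column, to $\Gamma$, but it need not itself lie in $\Gamma$. A cleaner route to the bound is through (iii): $\Gamma$ is $\mathsf H^\st$-connected, contains $(k^\ast,0)$, and also contains a floor-adjacent vertex. Then I would show directly that $\Gamma$ meets every level $0,\dots,k^\ast$. The reason is that $\Gamma$ separates $\0$ from infinity, and a separating $\mathsf H^\st$-connected set surrounding the column up to height $k^\ast$ must have a vertex at each level $k\le k^\ast$, since $\mathsf H^\st$-edges change the level by at most one. To make this work I would take $k^\ast$ to be the \emph{smallest} $k$ with $(k,0)\in\Gamma$ instead. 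Then the contour must still reach level $0$ on both sides to enclose $C\ni\0$ against the floor. An $\mathsf H^\st$-connected set containing a level-$k^\ast$ vertex and a level-$0$ vertex has at least $k^\ast+1$ elements, which gives $k^\ast\le s-1$.

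The main obstacle is (iii), the $\mathsf H^\st$-connectivity of the outer boundary. This is the standard planar duality fact, and I would invoke Tim\'ar's boundary-connectivity lemma \cite{Timar2013}. In the form needed, it says the following: in a graph whose cycle space is generated by cycles of length at most $3$ after adding the face diagonals (\Cref{lem:tiling}(ii) gives triangles and quadrilaterals, and $\mathsf H^\st$ triangulates the quadrilaterals), the boundary of a connected set $C$ visible from a connected set $\mathcal U$ is connected in the graph with all face diagonals added. Hence $\Gamma$ is $\mathsf H^\st$-connected. The points to check are that the floor does not spoil the argument and that the cycle space is indeed generated by the bounded faces. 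The floor only contributes the unbounded face, and every cycle of the planar graph $\mathsf H$ is a sum of bounded face boundaries. Tim\'ar's hypothesis is precisely this cycle-space condition, with both $C$ and $\mathcal U$ connected, so it applies.
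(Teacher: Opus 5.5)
Your proof follows the paper's almost step by step. You use the same $\Gamma=\partial C\cap\mathcal U$ and the same last-exit argument for (ii). For (iii) you make the same application of Tim\'ar's lemma with $(G,G^+)=(\mathsf H,\mathsf H^\st)$ and the bounded face boundaries as generating set. You take visibility from $\mathcal U$, i.e.\ from any vertex of $\mathcal U$, where the paper takes the unique end; both choices give exactly $\Gamma$. For (iv) you use the same level count: each $\mathsf H^\st$-edge changes the height by at most one.

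The one step you leave at the level of intuition is that $\Gamma$ contains a vertex of height $0$ (``the contour must reach level $0$ to enclose $C$ against the floor''). This follows at once from your own (ii) applied to the floor ray $(0,0),(0,-1),(0,-2),\dots$, which is exactly how the paper argues. After that, any $(k^\ast,0)\in\Gamma$ supplied by the vertical ray works; you do not need the smallest one, and the abandoned attempts before it can be dropped.

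Also, state Tim\'ar's hypothesis precisely: each generating cycle of $\mathsf H$ must be a clique in $\mathsf H^\st$. This holds because $\mathsf H^\st$ adds \emph{both} diagonals of every quadrilateral face. A quadrilateral with only one diagonal is not a clique, so ``triangulating'' the faces would not meet the hypothesis.
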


Clause (iv) confines $\Gamma$ to the $s$ anchors $(0,0),\dots,(s-1,0)$, as
the count of \Cref{lem:count} requires.

\begin{proof}
Let $U$ be the set of vertices $w\notin C$ from which some path in
$\mathsf H$ to infinity avoids $C$; by \Cref{lem:tiling}(i), $U$ is the
unique infinite component of $\mathsf H-C$. Set
\[
   \Gamma:=\{w\in U:\ w\ \text{is $\mathsf H$-adjacent to}\ C\},
\]
the outer boundary of $C$ visible from infinity. A vertex of $\Gamma$ is
adjacent to the open cluster $C$ without belonging to it, hence closed;
this is (i). Every vertex of $\Gamma$ is $\mathsf H$-adjacent to $C$ and
$\deg_{\mathsf H}\le5$ by \Cref{lem:tiling}, so $\abs\Gamma\le5\abs C$ and
$\Gamma$ is finite; non-emptiness follows in the next paragraph.

We prove (ii) first and deduce (iv) from it. Let $P$ be any infinite
self-avoiding $\mathsf H$-path started at $\0$, let $w$ be the last vertex of $P$ lying
in $C$ and $w'$ its successor on $P$. The tail of $P$ from $w'$ avoids $C$,
so $w'\in U$; and $w'$ is adjacent to $w\in C$, so $w'\in\Gamma$, which is
(ii); in particular $\Gamma\ne\emptyset$. The vertical ray
$(0,0),(1,0),(2,0),\dots$ therefore meets $\Gamma$ in a vertex $(k^\ast,0)$
of the column $i=0$, and the floor ray $(0,0),(0,-1),(0,-2),\dots$ meets
$\Gamma$ in a vertex of height $0$. Every edge of $\mathsf H^\st$ changes
the height $k$ by at most one, so once (iii) is proved these two vertices
are joined inside $\Gamma$ by a path of at most $s-1$ edges, and
$k^\ast\le s-1$, which is (iv).

Clause (iii) is an application of a boundary-connectivity
lemma of Tim\'ar \cite[Lemma~2]{Timar2013}: if $G^{+}$ is a locally finite
graph containing $G$, and the cycle space of $G$ admits a generating set
consisting of cycles that are chordal in $G^{+}$ (a cycle is
\emph{chordal} in $G^{+}$ if any two of its vertices are
$G^{+}$-adjacent), then for every connected subset $C$ of $G$ and every
$x\in(V(G)\cup\mathrm{Ends}(G))\setminus C$, the outer boundary of $C$
visible from $x$ (the set of vertices outside $C$ that are
$G$-adjacent to $C$ and joined to $x$ by a $G$-path avoiding $C$) is
connected in $G^{+}$. We apply this with
$(G,G^{+})=(\mathsf H,\mathsf H^\st)$: the open cluster $C$ is connected
in $\mathsf H$, and for $x$ we take the unique end of $\mathsf H$
(\Cref{lem:tiling}(i)), for which visibility from $x$ coincides with
membership in $U$, so the visible boundary is $\Gamma$.

It remains to exhibit the generating set. Since $\mathsf H$ is planar and
locally finite, every cycle encloses a bounded region containing finitely
many bounded faces and equals the modulo-$2$ sum of their boundaries, so
the bounded face boundaries generate the cycle space of $\mathsf H$; by
\Cref{lem:tiling}(ii) these are triangles, which are chordal, and
quadrilaterals, chordal in $\mathsf H^\st$ because $\mathsf H^\st$
contains both diagonals.
\end{proof}

\begin{lemma}\label{lem:count}
For every $v\in V$ and $s\ge1$, the number of sets $\Gamma\subseteq V$ with
$v\in\Gamma$, $\abs\Gamma=s$ and $\Gamma$ connected in $\mathsf H^\st$ is
at most $64^{s-1}$.
\end{lemma}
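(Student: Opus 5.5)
The plan is the standard lattice-animal count via spanning trees, using the degree bound $\deg_{\mathsf H^\st}\le8$ from \Cref{lem:tiling}. If $\Gamma\ni v$ is connected in $\mathsf H^\st$ with $\abs\Gamma=s$, then the induced subgraph $\mathsf H^\st[\Gamma]$ contains a spanning tree $T$ with $s$ vertices and $s-1$ edges. Root $T$ at $v$ and perform a depth-first traversal from $v$. This gives a closed walk of length $2(s-1)$ in $\mathsf H^\st$ that starts and ends at $v$ and crosses each tree edge exactly twice, once outward and once back.

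The map $\Gamma\mapsto$ (this walk) is injective, because $\Gamma$ is exactly the set of vertices the walk visits. We encode the walk as follows. At each step that leaves along a new tree edge, we record which of the at most $8$ $\mathsf H^\st$-neighbours of the current vertex it goes to. At each step that returns to the parent, we record a single ``back'' symbol.

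The walk is then determined by a word of length $2(s-1)$ over an alphabet of size $9$, subject to a balanced-parenthesis constraint. A cruder count is enough and gives exactly the constant we want. Record the traversal as a sequence of $s-1$ outward moves, each labelled by one of $8$ neighbour choices. In addition, record the up/down pattern of the traversal, which is a Dyck word of length $2(s-1)$. There are at most $2^{2(s-1)}=4^{s-1}$ such patterns.

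Given the pattern and the labels, the walk is reconstructed step by step. At a down step we move to the labelled neighbour, and at an up step we return to the parent, which is known from the stack. The number of sets $\Gamma$ is therefore at most $4^{s-1}\cdot 8^{s-1}=32^{s-1}\le64^{s-1}$.

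An even simpler route achieves $64^{s-1}$ directly: bound the number of closed walks of length $2(s-1)$ from $v$ in a graph of maximal degree $8$ by $8^{2(s-1)}=64^{s-1}$. Each admissible $\Gamma$ is the vertex set of at least one such walk, namely the depth-first traversal of a spanning tree, so the count of sets is at most the count of walks. I would present this version, since it matches the stated constant and needs only that every step of the walk is along an $\mathsf H^\st$-edge, where each vertex has at most $8$ neighbours. The case $s=1$ is the empty walk, giving the count $1=64^0$.

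The only points needing care are the following three. First, a spanning tree exists because $\Gamma$ is connected in $\mathsf H^\st$ (so the tree's edges are $\mathsf H^\st$-edges). Second, the depth-first traversal of a tree on $s$ vertices has length exactly $2(s-1)$. Third, the degree bound $8$ holds at every vertex, including the floor vertices, which have smaller degree, and this only helps. I expect no real obstacle; the main thing to state cleanly is the injectivity, namely that the set is recovered as the vertex set of the walk.
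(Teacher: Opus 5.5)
Your proof is correct, and the version you say you would present is the paper's argument. A depth-first traversal of a spanning tree of $\mathsf H^\st[\Gamma]$ is a closed walk of length $2(s-1)$ from $v$ whose vertex set is $\Gamma$, and the degree bound of \Cref{lem:tiling} leaves at most $8^{2(s-1)}=64^{s-1}$ such walks. (Your sharper $32^{s-1}$ count via Dyck patterns and neighbour labels is also valid, but the Peierls sum does not need it.)
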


\begin{proof}
Each such $\Gamma$ carries a spanning tree of the subgraph of
$\mathsf H^\st$ induced on it, and a depth-first traversal of that tree is
a closed walk from $v$ of length $2(s-1)$ in $\mathsf H^\st$ visiting
exactly the vertices of $\Gamma$. The walk determines $\Gamma$, and the
number of walks of length $2(s-1)$ from $v$ is at most
$\Delta^{2(s-1)}\le8^{2(s-1)}=64^{s-1}$ by \Cref{lem:tiling}.
\end{proof}

\begin{prop}\label{prop:peierls}
Let the vertices of $\mathsf H$ be open independently, each with
probability at least $1-\eps$, where $\eps\le2^{-7}$. Then
\[
   \P\big(\0\ \text{belongs to an infinite open cluster}\big)\ \ge\
   1-5\eps\ \ge\ \tfrac{123}{128} .
\]
\end{prop}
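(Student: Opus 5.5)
The plan is a standard Peierls bound built on \Cref{lem:contour} and \Cref{lem:count}. Write $\P(\0\not\leftrightarrow\infty)\le\P(\0\text{ closed})+\P(\0\text{ open},\ C\text{ finite})$. The first term is at most $\eps$. For the second, \Cref{lem:contour} applies on the event $\{\0\text{ open},\ C\text{ finite}\}$. It produces a set $\Gamma$ of closed vertices with $\abs\Gamma=s\ge1$, connected in $\mathsf H^\st$, and containing an anchor $(k^\ast,0)$ with $0\le k^\ast\le s-1$. A union bound over $s$, over the anchor, and over the $\mathsf H^\st$-connected sets through that anchor then gives
\[
\P(\0\text{ open},\ C\text{ finite})\ \le\ \sum_{s\ge1} s\cdot 64^{s-1}\,\eps^{s},
\]
using \Cref{lem:count} for the count of sets through a fixed anchor. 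The factor $\eps^s$ comes from independence of the sites: all $s$ vertices of a fixed $\Gamma$ are closed with probability at most $\eps^s$.

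Evaluating the sum with $x:=64\eps\le\tfrac12$, it equals $\eps\sum_{s\ge1}s x^{s-1}=\eps/(1-x)^2\le 4\eps$. Adding the $\eps$ for $\0$ closed gives $\P(\0\not\leftrightarrow\infty)\le5\eps$. With $\eps\le2^{-7}$ this is $1-5\eps\ge1-5/128=123/128$. A subtlety is that sites open with probability \emph{at least} $1-\eps$, not exactly. The union bound only needs $\P(\text{all of }\Gamma\text{ closed})=\prod_{v\in\Gamma}\P(v\text{ closed})\le\eps^s$, which holds for independent sites with individual closing probabilities at most $\eps$. So no coupling is needed.

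The only real subtlety is making sure the union bound is over a deterministic family. The contour $\Gamma$ is a random set, but the event $\{\0\text{ open}, C\text{ finite}\}$ is contained in the union, over deterministic sets $\Gamma'$ with properties (iii)--(iv), of the events $\{\text{all of }\Gamma'\text{ closed}\}$. Property (i) ensures that the realised $\Gamma$ is entirely closed. Properties (iii) and (iv) ensure it lies in the counted family: it is $\mathsf H^\st$-connected, of size $s$, and contains one of the $s$ anchors $(0,0),\dots,(s-1,0)$. Properties (i)--(iv) are all supplied by \Cref{lem:contour}, so I expect no obstacle beyond this bookkeeping. The step that carries the weight is the anchor clause (iv). It reduces the a priori infinite family of possible contours to $s$ starting points, which is what makes the series converge despite the floor making $\mathsf H$ amenable.
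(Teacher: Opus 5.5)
Your proposal is correct and matches the paper's proof: the same split into $\0$ closed plus $\0$ open with a finite cluster, the same union bound over the $s$ anchors and the at most $64^{s-1}$ $\mathsf H^\st$-connected sets from \Cref{lem:contour} and \Cref{lem:count}, and the same evaluation $\eps/(1-64\eps)^2\le4\eps$. Your remarks on closing probabilities that are only bounded by $\eps$, and on taking the union bound over a deterministic family, are correct and just make explicit what the paper leaves implicit.
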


\begin{proof}
If $\0$ is open with finite open cluster, \Cref{lem:contour} produces a set
$\Gamma$ of some size $s\ge1$, all of whose vertices are closed, connected
in $\mathsf H^\st$ and containing a vertex $(k^\ast,0)$ with
$k^\ast\le s-1$. For a fixed candidate set of size $s$ the probability that
all its vertices are closed is at most $\eps^s$, by independence. Summing
over the at most $s$ admissible anchors $(k^\ast,0)$ and, by
\Cref{lem:count}, over the at most $64^{s-1}$ candidate sets through each
anchor,
\[
   \P\big(\0\ \text{open, open cluster finite}\big)\ \le\
   \sum_{s\ge1}s 64^{s-1}\eps^{s}\ =\ \frac{\eps}{(1-64\eps)^{2}}
   \ \le\ 4\eps,
\]
using $64\eps\le\tfrac12$. Hence the probability in question is at least
$1-\eps-4\eps$.
\end{proof}

\subsection{Proof of \texorpdfstring{\Cref{thm:super}}{Theorem 2}}\label{sec:superproof}

\begin{proof}[Proof of \Cref{thm:super}]
By the scaling isomorphism we fix $\lambda=1$ and $\beta\ge31$. Declare the
site $(k,i)$ of $\mathsf H$ open iff $O_{k,i}$ occurs. By
\Cref{lem:density} the sites are open independently, each with probability
at least $1-\eps(\beta)$ and $\eps(\beta)\le2^{-7}$, so by
\Cref{prop:peierls}, with probability at least $1-5\eps(\beta)>0$ the site
$\0$ belongs to an infinite open cluster $K\subseteq V$. On this event, for
any two sites of $K$ there is a finite path in $K$ joining them, and
\Cref{lem:link}, applied to each (horizontal or vertical) edge of that
path, shows that all the sets $P_{k,i}$, $(k,i)\in K$, lie in a single
connected component of the graph. These sets are nonempty and pairwise
disjoint (distinct sites use disjoint bands or disjoint windows) so
that component contains infinitely many points. Hence an infinite component
exists with positive probability. The event that the graph contains an
infinite component is invariant under the position shifts
$(x,t)\mapsto(x+c,t)$, $c\in\R$, under which the law of $\X$ is ergodic
(see \cite{LastPenrose2017}); its probability is therefore $0$ or $1$, and
being positive it equals $1$.
\end{proof}

A variant of the construction proves the supercritical part of
\Cref{thm:adrcm} for profiles other than the indicator.

\begin{corollary}\label{cor:adrcmclass}
Fix $\gamma\in(0,1)$ and a profile function $\rho$ with
$\rho\ge c\1_{[0,1]}$ for some $c\in(0,1]$. In $d=1$ the
weight-dependent random connection model with kernel
$(t\vee s)^{1-\gamma}(t\wedge s)^{\gamma}$ and profile $\rho$ almost
surely contains an infinite connected component once
$\lambda\beta\ge\beta_0(c)$.
\end{corollary}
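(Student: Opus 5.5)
The approach is to re-run the band renormalisation of \Cref{sec:bands,sec:windows,sec:tiling}, replacing the deterministic linkage of \Cref{lem:bandedge} by a random one. With profile $\rho\ge c\1_{[0,1]}$, the kernel satisfies $(t\vee s)^{1-\gamma}(t\wedge s)^{\gamma}\le t\vee s$. So two points with radii (in the sense of \Cref{sec:rainbows}) exceeding $\beta2^k$ and at distance at most $\beta2^k$ are joined with probability at least $c$. This holds independently over pairs given the points, since in the random connection model edges are independent given the marked configuration. By the scaling $(x,t)\mapsto(\lambda x,t)$ we fix $\lambda=1$, keep the bands $\X_k$, the windows $\Lambda_{k,i}$ and the graph $\mathsf H$, and strengthen the site event so that it still has probability close to one. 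The point is that $\beta$ large gives many candidate partners per cell, and each of them succeeds with probability at least $c$.

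Concretely, I would define the site event $O'_{k,i}$ as follows. Every cell of $\Lambda_{k,i}$ contains at least $M$ band-$k$ points. Every pair of points of $P_{k,i}$ lying in the same or adjacent cells (in particular the adjacent cells across each window boundary) are joined by at least one edge among the available pairs. Finally, for each cell $J$ of the parent window $\Lambda_{k+1,\lfloor i/2\rfloor}$ inside $\Lambda_{k,i}$, some band-$(k+1)$ point in $J$ is joined to some band-$k$ point in the child cells of $J$. To restore the independence of \Cref{lem:density}, I would instead attach the edge randomness to the lower-scale site only, and require robust statements. For example, within each cell, the points of $P_{k,i}$ form a clique-like connected cluster with high probability: cells have about $\beta/4$ points at mutual distance below $\beta2^k/2$, so each pair is joined with probability at least $c$ and the Erd\H{o}s--R\'enyi graph $G(\beta/4,c)$ is connected with probability $1-O(\beta e^{-c\beta/8})$. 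Likewise, adjacent cells are linked by some edge with probability $1-(1-c)^{(\beta/8)^2}$.

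For the vertical and horizontal links between different sites, the edges used involve points of two bands, or of two windows. I would assign the edge variables for a pair to the site containing its lower-band endpoint, or the left window for horizontal pairs. Then each site event is measurable with respect to its own region of $\R\times(0,1)$ together with the edge variables assigned to it, and is open given that its neighbours' points exist. To avoid the dependence on the neighbours' point counts, I would condition in a Peierls-compatible way: define $O'_{k,i}$ as the event that the point counts are large \emph{and}, for every configuration of the parent cell contents with at least one point, some edge goes down. This is slightly awkward. The simpler alternative is to declare the vertical link event part of the child site and use that the parent's points in cell $J$ are present whenever the parent is open. Independence across sites then fails only through the point process of the parent. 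That still gives a $1$-dependent-type field on $\mathsf H$ with bounded dependence range, and the Peierls count of \Cref{prop:peierls} tolerates this by extracting a sub-contour of pairwise far-apart sites, costing a constant power in $\eps$.

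The main obstacle is exactly this dependence between the edge variables of vertical links and the parent's Poisson configuration. I would first try the 1-dependence route: each site's event, including its downward and rightward link edges, depends only on the point configurations of itself and its parent and right neighbour. A closed contour $\Gamma$ of size $s$ in $\mathsf H^\st$ then contains at least $s/9$ sites with disjoint dependence neighbourhoods, giving the bound $\sum_s s\,64^{s}\eps^{s/9}$. This is summable once $\eps(\beta)\le\eps_0$, which holds for $\beta\ge\beta_0(c)$ since $\eps(\beta)\le C\beta^2(1-c)^{\beta/8}+16e^{-\beta/8}$. The rest, an infinite open cluster giving an infinite component and the zero--one upgrade by ergodicity, follows verbatim from the proof of \Cref{thm:super}.
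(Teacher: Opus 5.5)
Your proposal is correct and is essentially the paper's argument. The paper likewise realises $G$ via independent uniforms, so that every pair with $(t\vee s)\abs{x-y}\le\beta$ is kept independently with probability $c$; it replaces $O_{k,i}$ by a window event with at least $N=\lceil\beta/8\rceil$ designated band-$k$ points per cell, Erd\H{o}s--R\'enyi connectivity inside cells and $N^2$-pair link events for the edges of $\mathsf H$, with each pair assigned only once (\Cref{lem:thinned}), and then runs the contour count of \Cref{lem:contour} on a sub-contour of pairwise distant sites of the resulting finite-range dependent field. The differences are bookkeeping: the paper's good vertex carries the link events of all its incident edges (dependence range two, exponent $s/26$) where you attach each link to one endpoint (keep it the upward one, matching your assignment rule, not ``downward'' as written in your last paragraph), and the final zero--one step is not verbatim from \Cref{thm:super}, since it needs ergodicity of $\X$ jointly with the independent edge variables.
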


Every non-trivial profile satisfies the hypothesis after a rescaling of
$\beta$. The map $(x,t)\mapsto(\lambda x,t)$ sends the model at
intensity $\lambda$ and parameter $\beta$ to the model at intensity one
and parameter $\lambda\beta$, for every kernel and profile, so we take
$\lambda=1$ and prove percolation for $\beta\ge\beta_0(c)$. For $c=1$
the graph contains the model with indicator profile and the same
kernel; the kernel is pointwise below $t\vee s$, so this in turn
contains the graph of \Cref{thm:super}, an infinite component exists
for $\beta\ge31$, and $\beta_0(1)=31$. Assume from now on $c\in(0,1)$.

Realise the edges of the graph $G$ of the corollary by a family
$(U_e)$ of independent uniform random variables on $(0,1)$, indexed by
the pairs $e$ of points of $\X$ and independent of $\X$: the pair
$e=\{(x,t),(y,s)\}$ is an edge of $G$ when
$U_e\le\rho\big(\beta^{-1}(t\vee s)^{1-\gamma}(t\wedge s)^{\gamma}
\abs{x-y}\big)$. Call a pair $e\in E$ a \emph{candidate} and a
candidate with $U_e\le c$ \emph{retained}. Since
$(t\vee s)^{1-\gamma}(t\wedge s)^{\gamma}\le t\vee s$ and
$\rho\ge c\1_{[0,1]}$, every retained candidate is an edge of $G$, so
it suffices to find an infinite component in the graph $G'$ of
retained candidates, in which every pair of $E$ is kept independently
with probability $c$.

Windows, cells, bands and the tiling $\mathsf H$ are as before. Set
$N:=\lceil\beta/8\rceil$, define
\[
   O^N_{k,i}:=\big\{\text{every cell of }\Lambda_{k,i}\text{ contains
   the positions of at least }N\text{ band-}k\text{ points}\big\},
\]
and on $O^N_{k,i}$ let $Q_{k,i}$, the \emph{designated points} of
$\Lambda_{k,i}$, consist of the $N$ leftmost band-$k$ points of each
cell. Two band-$k$ points in the same cell or in adjacent cells lie
within $\beta2^k$ of each other, so by \Cref{lem:bandedge} every such
pair is a candidate. Let the \emph{window event} $W_{k,i}$ be the
intersection of $O^N_{k,i}$ with the events that, for each of the
sixteen cells, the retained candidates connect the designated points
of the cell, and that each of the fifteen pairs of consecutive cells
is joined by a retained candidate between designated points; on
$W_{k,i}$ the whole of $Q_{k,i}$ lies in one connected component of
$G'$.

Each edge $e=\{u,v\}$ of $\mathsf H$ is assigned $N^2$ pairs and a
\emph{link event}. For a horizontal edge $\{(k,i),(k,i+1)\}$ take the
pairs formed by the designated points of the last cell of
$\Lambda_{k,i}$ and those of the first cell of $\Lambda_{k,i+1}$;
these cells are adjacent, so the pairs are candidates. For a vertical
edge $\{(k,i),(k+1,j)\}$ recall from the proof of \Cref{lem:link}(ii)
that the interval $J=[a+\beta2^k,a+2\beta2^k)$, with $a$ the left
endpoint of $\Lambda_{k,i}$, is a cell of $\Lambda_{k+1,j}$; it is
also the union of the third and fourth cells of $\Lambda_{k,i}$. Take
the pairs formed by the designated points of the third cell of
$\Lambda_{k,i}$ and the designated points of the cell $J$ of
$\Lambda_{k+1,j}$: the positions of such a pair both lie in $J$, hence
within $\beta2^k$ of each other, both radii exceed $\beta2^k$, and the
pairs are candidates by \Cref{lem:bandedge}. In both cases, writing
$O^N_u:=O^N_{k,i}$ (and similarly $W_u:=W_{k,i}$ later on) for the vertex $u=(k,i)$ of
$\mathsf H$, set
\[
   L_e:=O^N_u\cap O^N_v\cap\big\{\text{at least one of the $N^2$ pairs
   of $e$ is retained}\big\}.
\]
The window event and both assignments are illustrated in
\Cref{fig:thinned}.

\begin{figure}[ht]\centering
\color{black}
\begin{tikzpicture}[xscale=0.75]
  \fill[orange!12] (2,-0.18) rectangle (4,1.32);
  \node[orange!60!black] at (3.0,1.48) {\footnotesize $J$};
  \draw[zline] (-0.4,0) -- (17.6,0);
  \draw[gray!80, thick] (0,-0.22) -- (0,0.22);
  \draw[gray!80, thick] (16,-0.22) -- (16,0.22);
  \foreach \x in {1,...,15,17}{\draw[gray!35] (\x,-0.09) -- (\x,0.09);}
  \draw[zline] (-0.4,1.15) -- (17.6,1.15);
  \draw[gray!80, thick] (0,0.93) -- (0,1.37);
  \foreach \x in {2,4,...,16}{\draw[gray!35] (\x,1.06) -- (\x,1.24);}
  \foreach \a in {2.2,2.5,2.8}{\foreach \b in {2.4,3.0,3.6}{
    \draw[orange!45, thin, dashed] (\a,0) -- (\b,1.15);}}
  \draw[escape] (2.5,0) -- (3.0,1.15);
  \foreach \a/\b in {15.2/16.2,15.2/16.5,15.2/16.8,15.5/16.2,
                     15.5/16.5,15.8/16.5,15.8/16.8}{
    \draw[red!40, thin, dashed] (\a,0) to[bend left=40] (\b,0);}
  \draw[red!40, thin, dashed] (15.8,0) to[bend left=60] (16.2,0);
  \draw[forced] (15.5,0) to[bend left=40] (16.8,0);
  \foreach \m in {0,...,16}{
    \draw[teal!55!black, very thin] (\m+0.2,0) to[bend left=45] (\m+0.5,0);
    \draw[teal!55!black, very thin] (\m+0.5,0) to[bend left=45] (\m+0.8,0);}
  \foreach \m in {0,3,6,9,12,15}{
    \draw[teal!55!black, very thin] (\m+0.2,0) to[bend left=45] (\m+0.8,0);}
  \draw[teal!55!black, thin, dashed] (10.2,0) to[bend left=60] (10.8,0);
  \foreach \m in {0,...,14}{
    \pgfmathsetmacro\pa{\m+0.8-0.3*mod(\m,2)}
    \draw[teal!55!black, thin] (\pa,0) to[bend left=60] (\m+1.2,0);}
  \foreach \m in {0,...,16}{
    \fill[black] (\m+0.2,0) circle (1.05pt);
    \fill[black] (\m+0.5,0) circle (1.05pt);
    \fill[black] (\m+0.8,0) circle (1.05pt);}
  \fill[gray!60] (5.93,0) circle (1.05pt);
  \foreach \b in {2.4,3.0,3.6}{\fill[blue!65!black] (\b,1.15) circle (1.3pt);}
  \fill[gray!60] (3.85,1.15) circle (1.3pt);
  \draw[gray!70, {Stealth[length=1.4mm]}-{Stealth[length=1.4mm]}]
     (12,0.32) -- (13,0.32) node[midway,above=0.5pt]{\scriptsize $\beta2^k\!/2$};
  \draw[gray!70, {Stealth[length=1.4mm]}-{Stealth[length=1.4mm]}]
     (12,0.86) -- (14,0.86) node[midway,above=0.5pt]{\scriptsize $\beta2^k$};
  \node[black] at (-1.0,0) {\footnotesize $\X_k$};
  \node[black] at (-1.0,1.15) {\footnotesize $\X_{k+1}$};
  \node[teal!45!black] at (7.5,0.62) {\footnotesize $W_{k,i}$};
  \draw[teal!45!black, thin, -{Stealth[length=1.2mm]}]
     (7.5,0.5) -- (7.82,0.26);
  \node[red!75!black] at (16.0,0.72) {\footnotesize $L_{e}$};
  \node[orange!85!black] at (1.55,0.72) {\footnotesize $L_{e'}$};
  \draw[brc] (0,-0.6) -- (16,-0.6) node[midway,below=4pt]{\footnotesize $\Lambda_{k,i}$};
  \draw[brc] (16,-0.6) -- (17.4,-0.6) node[midway,below=4pt]{\footnotesize $\Lambda_{k,i+1}$};
  \draw[decorate, decoration={brace, amplitude=4pt}] (0,1.72) -- (17.4,1.72)
     node[midway,above=4pt]{\footnotesize $\Lambda_{k+1,j}$};
\end{tikzpicture}
\caption{ The window event $W_{k,i}$ and the link events, drawn for
$N=3$. On the lower line the sixteen cells of $\Lambda_{k,i}$ and the
first cell of $\Lambda_{k,i+1}$ each hold the positions of their $N$
designated band-$k$ points (black); a cell may contain further
band-$k$ points (gray), which are not designated. On $W_{k,i}$ the
retained candidates connect the designated points of each cell and
join each of the fifteen pairs of consecutive cells (teal); one lost
candidate is shown dashed, harmless since its cell remains connected
without it. The link event of the horizontal edge
$e=\{(k,i),(k,i+1)\}$ asks that of the $N^2$ pairs between the last
cell of $\Lambda_{k,i}$ and the first cell of $\Lambda_{k,i+1}$
(red, lost candidates dashed) at least one is retained (solid). The
link event of the vertical edge $e'=\{(k,i),(k+1,j)\}$ asks the same
of the $N^2$ pairs between the third cell of $\Lambda_{k,i}$ and the
cell $J$ of $\Lambda_{k+1,j}$ (orange): $J$ is at once a cell of the
parent window and the union of the third and fourth cells of
$\Lambda_{k,i}$, so all these positions lie within $\beta2^k$ of each
other. Of the band-$(k+1)$ points only the designated points of $J$
are drawn. }
\label{fig:thinned}
\end{figure}

\begin{lemma}\label{lem:thinned}
Let $c\in(0,1)$ and
$\eps_1:=16e^{-\beta/32}+32N(1-c)^{N/2}+15(1-c)^{N^2}$.
\begin{enumerate}
\item[(i)] If $N(1-c)^{N/2}\le\tfrac12$, then $\P(W_{k,i})\ge1-\eps_1$
  for all $k,i$.
\item[(ii)] $\P\big(O^N_u\cap O^N_v\setminus L_e\big)\le(1-c)^{N^2}$
  for every edge $e=\{u,v\}$ of $\mathsf H$.
\item[(iii)] No pair of points is assigned twice: not to two distinct
  window events, not to two distinct edges of $\mathsf H$, and not to
  a window event and an edge.
\end{enumerate}
\end{lemma}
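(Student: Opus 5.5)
The plan is to condition on the point process $\X$ throughout and treat the retention variables $(U_e)$ as the only remaining randomness. Given $\X$, the events $O^N_{k,i}$, the designated sets $Q_{k,i}$ and the set of candidates are all determined. The pairs are retained independently with probability $c$. Each of the three claims then separates into a Poisson estimate on $\X$ and an estimate on independent coin flips.

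For (i), I would first bound $\P\big((O^N_{k,i})^c\big)$. The number of band-$k$ points in a cell is Poisson with mean $\mu=\beta/4$, by the computation in \Cref{lem:density}. Since $N-1<\beta/8=\mu/2$, the standard lower-tail Chernoff bound $\P(X\le\mu/2)\le e^{-\mu(1/2)^2/2}=e^{-\beta/32}$ applies. A union bound over the sixteen cells then gives $16e^{-\beta/32}$.

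On $O^N_{k,i}$, the retained candidates among the $N$ designated points of a fixed cell form an Erdős–Rényi graph $G(N,c)$, because all these pairs are candidates. If that graph is disconnected, some set $S$ with $1\le\abs S=:t\le N/2$ sends no retained pair to its complement. This has probability at most
\[
\sum_{t\le N/2}\binom Nt(1-c)^{t(N-t)}\le\sum_{t\ge1}\big(N(1-c)^{N/2}\big)^t\le2N(1-c)^{N/2},
\]
using $N-t\ge N/2$ and the hypothesis $N(1-c)^{N/2}\le\frac12$. Summing over sixteen cells gives $32N(1-c)^{N/2}$. For each of the fifteen consecutive cell pairs, all $N^2$ designated cross pairs are candidates, so failure has probability $(1-c)^{N^2}$. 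A union bound over these three contributions gives $\eps_1$. The one point to handle carefully is that the bound on the conditional probability given $\X$ holds on $O^N_{k,i}$ and is then integrated.

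For (ii), I would again condition on $\X$ on the event $O^N_u\cap O^N_v$. The $N^2$ pairs assigned to $e$ are distinct candidates, as checked in the construction via \Cref{lem:bandedge}, so the probability that none is retained is exactly $(1-c)^{N^2}$. Integrating over $\X$ gives the bound.

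For (iii), I would classify a pair by the bands of its two points and their windows. Window pairs have both points in band $k$ and in the same window $\Lambda_{k,i}$, so the pair determines $(k,i)$. Horizontal-edge pairs have both points in band $k$ but in the adjacent windows $\Lambda_{k,i}$ and $\Lambda_{k,i+1}$; the pair determines the edge and is never a window pair. Vertical-edge pairs have one point in band $k$ and one in band $k+1$, so they are neither window pairs nor horizontal pairs. The band-$k$ point lies in a unique window $\Lambda_{k,i}$, and this fixes the edge $\{(k,i),(k+1,\lfloor i/2\rfloor)\}$. I expect the main obstacle to be the connectivity estimate for $G(N,c)$ in (i), together with the rounding in $N=\lceil\beta/8\rceil$ needed for the Chernoff step; the rest is bookkeeping.
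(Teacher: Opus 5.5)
Your proposal is correct and follows the paper's proof essentially step by step. It conditions on $\X$, uses the same $\binom Nt(1-c)^{t(N-t)}\le\big(N(1-c)^{N/2}\big)^t$ cut bound for the cell graphs, applies the same union bound over the sixteen cells and fifteen consecutive pairs, and makes the same band/window classification in (iii). The only cosmetic difference is that the paper derives the Poisson lower tail by applying Markov's inequality to $2^{-Z}$, which is the Chernoff bound at $\theta=\ln2$ and yields the same $e^{-\beta/32}$.
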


\begin{proof}
(i) As in \Cref{lem:density}, the number of band-$k$ points in a fixed
cell is Poisson with mean $\mu=\beta/4$, independently over the
sixteen cells. For $Z$ Poisson with mean $\mu$, Markov's inequality
applied to $2^{-Z}$ gives
\[
   \P\big(Z\le\tfrac{\mu}{2}\big)\ \le\ 2^{\mu/2}\,\E\big[2^{-Z}\big]
   \ =\ e^{-\mu(1-\ln2)/2}\ \le\ e^{-\mu/8},
\]
and $N-1<\beta/8=\mu/2$, so a fixed cell contains fewer than $N$
band-$k$ points with probability at most $e^{-\beta/32}$; hence
$\P(O^N_{k,i})\ge1-16e^{-\beta/32}$. Conditionally on $\X$, on
$O^N_{k,i}$, the retained candidates between the designated points of
a fixed cell form the binomial random graph on $N$ vertices with edge
probability $c$. If that graph is disconnected, some set of $j\le N/2$
of its vertices is joined to none of the other $N-j$, an event of
probability at most
$\binom Nj(1-c)^{j(N-j)}\le\big(N(1-c)^{N/2}\big)^{j}$ for each fixed
$j$, so the graph is disconnected with probability at most
$\sum_{j\ge1}\big(N(1-c)^{N/2}\big)^{j}\le2N(1-c)^{N/2}$ when
$N(1-c)^{N/2}\le\tfrac12$. A fixed pair of consecutive cells fails to
be joined when all $N^2$ candidates between their designated points
are lost, an event of probability $(1-c)^{N^2}$. The union bound over
the sixteen cells and the fifteen consecutive pairs proves (i).

(ii) On $O^N_u\cap O^N_v$ the $N^2$ pairs of $e$ are defined, and all
of them are lost with probability $(1-c)^{N^2}$, the uniforms being
independent of $\X$.

(iii) A pair of a window event joins two band-$k$ points of one
window, and distinct windows have disjoint point sets (disjoint
intervals at equal scale, disjoint bands at distinct scales), so the
window determines the event. A pair of a horizontal edge joins
band-$k$ points of two distinct windows, so it coincides with no
window pair, and the two windows determine the edge. A pair of a
vertical edge joins points of two consecutive bands, so it coincides
with no window pair and no horizontal pair; the lower band determines
$k$, and the window of its band-$k$ endpoint is the child window,
which determines the edge.
\end{proof}

\begin{proof}[Proof of \Cref{cor:adrcmclass}]
Call a vertex $u$ of $\mathsf H$ \emph{good} if $W_u$ holds together
with $L_e$ for every edge $e$ of $\mathsf H$ incident to $u$. Since
$W_u\subseteq O^N_u$ and $\deg_{\mathsf H}\le5$ (\Cref{lem:tiling}),
\Cref{lem:thinned}(i),(ii) and the union bound give
\[
   \P(u\ \text{good})\ \ge\ 1-\eps_2,\qquad
   \eps_2:=\eps_1+5\big(16e^{-\beta/32}+(1-c)^{N^2}\big).
\]
If the good vertices contain an infinite cluster in $\mathsf H$, then
$G'$ has an infinite component: for every edge of $\mathsf H$ between
good vertices the link event produces a retained candidate joining a
designated point of one window to a designated point of the other,
the window events connect the designated points within each window,
and the sets $Q_u$ are nonempty and pairwise disjoint over the
cluster.

The good-vertex events are not independent, but their dependence has
range two. The event $\{u\ \text{good}\}$ is measurable with respect
to the restriction of $\X$ to the regions
$\Lambda_{k,i}\times[2^{-k-1},2^{-k})$ of $u$ and of its neighbours,
together with the uniforms of the pairs assigned to $W_u$ and to the
edges at $u$. If $u$ and $v$ are at graph distance at least $3$ in
$\mathsf H$, no vertex is a neighbour of both and no edge is incident
to both, so the two collections of regions are disjoint and, by
\Cref{lem:thinned}(iii), so are the two collections of pairs.
Conditionally on $\X$ the two events are then independent, and their
conditional probabilities are functions of the restrictions of $\X$
to disjoint regions, hence the events are independent; the same holds
for every family with pairwise distances at least $3$.

Suppose $\0$ is good and its good cluster is finite.
\Cref{lem:contour}, applied to the good-vertex configuration,
produces a set $\Gamma$ of $s\ge1$ vertices, none good, connected in
$\mathsf H^\st$ and containing an anchor $(k^\ast,0)$ with
$k^\ast\le s-1$. A ball of radius $2$ in $\mathsf H$ contains at most
$1+5+5\cdot4=26$ vertices, so repeatedly selecting a vertex of
$\Gamma$ and discarding the at most $25$ others within distance $2$
of it leaves a set $\Gamma'\subseteq\Gamma$ of at least $s/26$
vertices with pairwise distances at least $3$, and
\[
   \P\big(\text{no vertex of }\Gamma\text{ is good}\big)\ \le\
   \prod_{u\in\Gamma'}\P(u\ \text{not good})\ \le\ \eps_2^{\,s/26}.
\]
Summing over the at most $s$ anchors and, by \Cref{lem:count}, the at
most $64^{s-1}$ candidate sets through each anchor,
\[
   \P\big(\0\ \text{good, good cluster finite}\big)\ \le\
   \sum_{s\ge1}s\,64^{s-1}\eps_2^{\,s/26}\ \le\ 4\eps_2^{\,1/26}
\]
once $64\eps_2^{1/26}\le\tfrac12$. Since $N=\lceil\beta/8\rceil$, for
fixed $c$ every term of $\eps_1$ and $\eps_2$ tends to $0$ as
$\beta\to\infty$, so there is $\beta_0(c)<\infty$ such that
$\beta\ge\beta_0(c)$ implies $N(1-c)^{N/2}\le\tfrac12$,
$64\eps_2^{1/26}\le\tfrac12$ and $\eps_2+4\eps_2^{1/26}<1$. For such
$\beta$ the origin belongs to an infinite good cluster with positive
probability, so $G'$, and with it $G$, contains an infinite component
with positive probability. The existence of an infinite component in
$G$ is invariant under the position shifts $(x,t)\mapsto(x+r,t)$,
$r\in\R$; since $G$ is a measurable function of $\X$ independently
marked by the edge variables, whose law is ergodic under these shifts
by the marking theorem and the ergodicity of $\X$ (see
\cite{LastPenrose2017}), its probability is $0$ or $1$, and being
positive it equals $1$.
\end{proof}

\begin{remark}\label{rem:tower}
The spatial branching built into \Cref{prop:peierls} is forced. For the
vertical linkage a scale-$k$ window must have length of order $\beta2^k$,
and any event guaranteeing a spanning run in such a window fails with
probability at least $c(\beta)>0$ \emph{uniformly in $k$}: by
\Cref{rem:copies} the window length, the point spacings and the connection
range all carry the same factor $2^k$, so nothing improves with the scale.
For a fixed nested sequence of windows, one per scale, the spanning events
are independent (they use disjoint bands), so they all hold simultaneously
with probability $\prod_k(1-c(\beta))=0$: a single tower of scales dies
almost surely. The infinite cluster must be able to route around a broken
window through siblings at the same scale, which the horizontal edges of
$\mathsf H$ provide; the anchored-contour bound of
\Cref{lem:contour} is the corresponding replacement for a branching
argument along the (unique-parent) vertical tree.
\end{remark}

\begin{remark}\label{rem:superlit}
\begin{enumerate}[(i)]
\item \Cref{thm:main} and \Cref{thm:super} are two sides of one mechanism.
The construction of this section needs, at every scale, a band that is
locally dense (many points per unit of its own connection range). In the
continuum, band $k$ carries $\lambda\beta/2$ expected points per unit of
connection range \emph{at every scale} $k$, so the single parameter
$\lambda\beta$ makes all bands dense simultaneously, and percolation
follows once it is large. On the lattice this is impossible: band $k$ of
the simplified model (the vertices with $R_v\in(x_m2^k,x_m2^{k+1}]$)
contains on average $x_m/2<\tfrac12$ points per unit of connection range,
for every $k$ and every admissible $x_m$ (the unit vertex spacing caps
the density that the radius law can produce, uniformly over scales). No
band is ever dense at its own scale, the renormalisation of this section
has nothing to feed on, and indeed by \Cref{thm:main} the lattice model
never percolates. In this precise sense $\lambda\beta$ is the density the
lattice cannot exceed, and the supercritical phase is a purely
continuum phenomenon. 
\item The constant $31$ is what the crude bounds
give ($16e^{-\lambda\beta/4}\le2^{-7}$ against the walk-counting constant
$64$ of \Cref{lem:count}); no attempt has been made to optimise it, and
the true $\beta_c$ at intensity one is unknown beyond
$\beta_c\in[1,31]$. The simulations of \Cref{sec:numerics} place it near
$2$; they also give $p_c(\mathsf H)\approx0.70$ for the tiling of
\Cref{sec:tiling}, so optimising the constants in the present proof could
lower $31$ to about $15$ and not beyond.
\end{enumerate}
\end{remark}

\paragraph{Acknowledgements.}
The authors thank Lukas L\"uchtrath for reminding them of some
immediate consequences of their results, now included in the paper.

\appendix
\crefalias{section}{appendix}
\Crefname{appendix}{Appendix}{Appendices}

\section{A numerical study of the critical value}\label{sec:numerics}

This appendix summarises a numerical study of the critical value at
intensity one. The results place $\beta_c$ close to $2$, and all of them
are compatible with $\beta_c\in[1.5,2.5]$; the rigorous bounds are
$1\le\beta_c\le31$ (\Cref{thm:continuum,thm:super}). The Julia code, the
seeds and the run manifests of the runs reported here are included in the
ancillary files of the arXiv submission of this paper.

\paragraph{Setup.} We simulate the model of \Cref{sec:setup} at $\lambda=1$
on a window $[0,L]$, in the radius formulation of \Cref{sec:rainbows}: the
positions form a Poisson process of intensity one, each point carries an
independent mark $t$ uniform on $(0,1)$ and the radius $R=\beta/t$, and two
points are joined iff $\abs{x-y}\le R_x\wedge R_y$. Marks are stored rather
than radii, so a single sample realises the graphs for all $\beta$ at once,
and these graphs increase in $\beta$. Components are computed by a
union-find pass over the points in decreasing order of radius: when a point
is processed, its edges to all previously processed points reach exactly as
far as its own radius, so every edge is discovered once, at its endpoint of
smaller radius.

\paragraph{Estimators.} Two estimators are used. First, on a fixed sample
the event that a single component covers the centred subwindow of length
$L/2$ is increasing in $\beta$ and therefore has a per-sample threshold,
which bisection locates in about thirteen component passes; the median of
this threshold over replicates is the value of $\beta$ at which the
spanning probability equals $\tfrac12$. Second, a variant of the algorithm
of Newman and Ziff \cite{NewmanZiff2001} replays the edges of a sample in
increasing order of activation value $(t_x\vee t_y)\abs{x-y}$ and produces
in one pass the whole curve $\theta_L(\beta)$, the fraction of subwindow
points lying in the largest component, of which we record the median over
replicates. Window lengths run from $L=10^4$ to $L=10^9$, with replicate
counts decreasing from $1024$ to $12$. Since the radius tail exponent is
exactly $1$, every finite-size effect decays like $1/\log L$ rather than
like a power of $L$; all extrapolations are therefore taken in the variable
$1/\log L$.

\paragraph{Controls.} Since all finite-size effects here are logarithmic,
the procedure is checked against the three statements the paper proves. At
$\beta=0.5$, subcritical by \Cref{thm:continuum}, every observable reads
subcritical at every $L$. At $\beta=31$, supercritical by
\Cref{thm:super}, every window is fully connected: every radius exceeds $31$, so consecutive points at distance at most $31$ are always joined, and point-free stretches of length $31$ have intensity $e^{-31}$ per unit length, so none occurs at these window sizes. The percolation density at $\beta=31$ is below one by an amount of the same order, so full connectivity is the expected reading. The third check is the
simplified model of \Cref{sec:simplified}, which spans finite windows for
$x_m$ close to one but never percolates (\Cref{thm:main}): run on it, the
procedure extrapolates the spanning threshold to $x_m=1$ and produces no
threshold in the interior of $(0,1)$ (\Cref{fig:threshold-drift}, right
panel).

\begin{figure}[!ht]
\centering
\includegraphics[width=\textwidth]{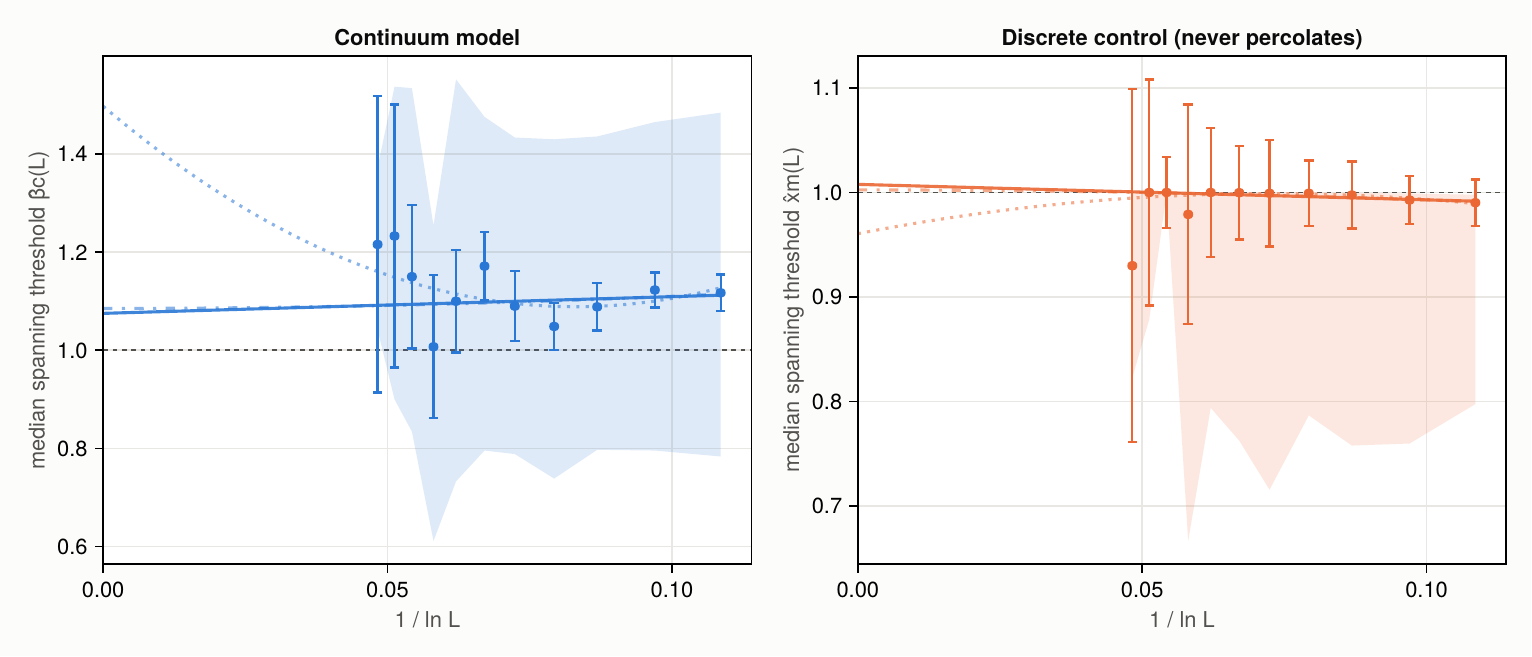}
\caption{Median per-sample spanning threshold against $1/\log L$, with
interquartile bands and three extrapolating fits, for $L=10^4$ to $10^9$.
Left: the continuum model; the threshold extrapolates to about $1.08$,
below $\beta_c$, since finite windows are spanned below criticality by
single points of very large radius. Right: the simplified model on $\Z$,
which spans finite windows but never percolates (\Cref{thm:main}); the
same procedure extrapolates its threshold to $x_m=1$ and produces no
threshold in the interior of $(0,1)$.}
\label{fig:threshold-drift}
\end{figure}

\paragraph{Results.} \Cref{fig:theta-curves} shows $\theta_L(\beta)$. For
$\beta\le1.5$ the curves decay with $L$ at an accelerating rate, while at
$\beta=2$ they are stable at $\theta\approx0.94$ and at $\beta=3$ at
$\theta\approx0.9985$; the crossings of consecutive curves lie in
$[1.7,2.4]$. In the same direction, at $\beta=1.5$ windows of length
$10^9$ still contain interior stretches connected to neither end of the
window, while at $\beta=2$ such stretches no longer occur. Together this
places $\beta_c$ close to $2$. The drift of the near-critical curves is
logarithmic and has not stopped completely at the largest sizes, so a
critical value slightly above $2.5$ is not excluded.

\begin{figure}[!ht]
\centering
\includegraphics[width=0.72\textwidth]{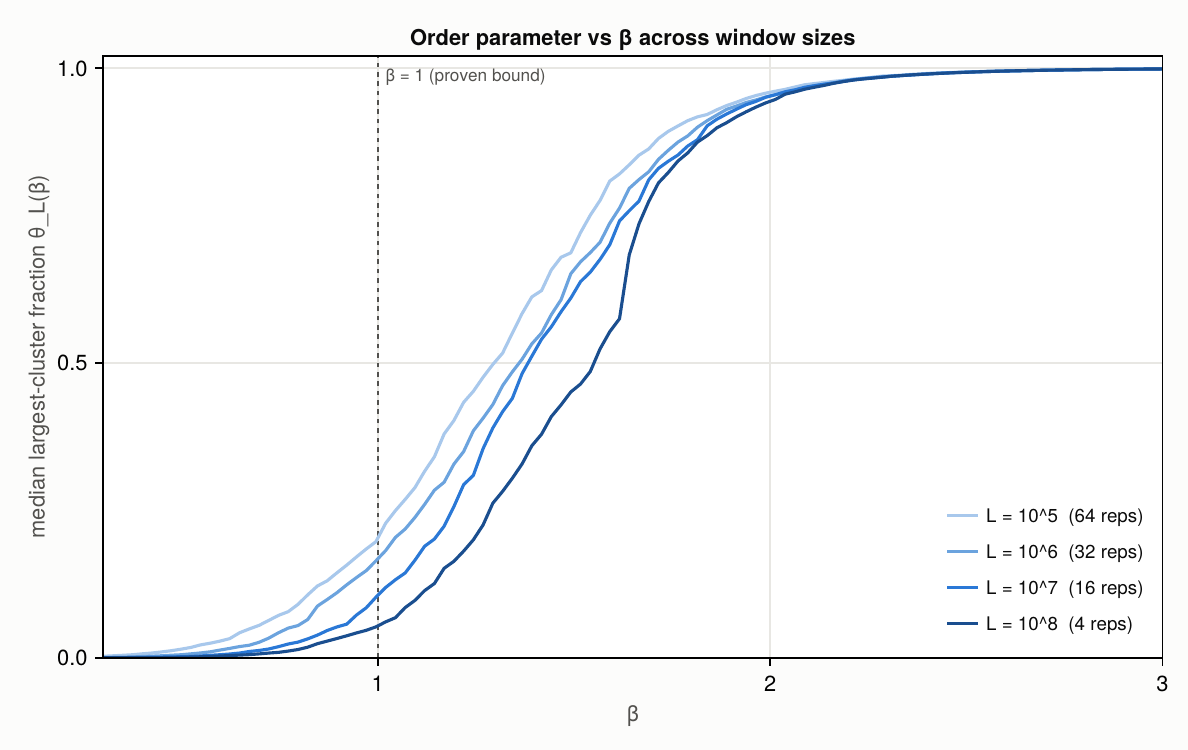}
\caption{The largest-component fraction $\theta_L(\beta)$ of the central
subwindow, median over replicates, for window lengths $L=10^5$ to $10^8$
(darker with increasing $L$; $64$ down to $4$ replicates). The curves
decay with $L$ for $\beta\le1.5$, are stable in $L$ from $\beta=2$ on, and
sharpen with $L$ while crossing at heights $0.7$ to $0.9$. The dashed line
marks the proven lower bound $\beta_c\ge1$.}
\label{fig:theta-curves}
\end{figure}

The spanning threshold itself does not converge to $\beta_c$: its medians
are close to $1.1$ for every $L$ and extrapolate to about $1.08$
(\Cref{fig:threshold-drift}, left panel). A finite window can be spanned
below criticality by a single point of very large radius, the mechanism of
\Cref{cor:diameter}, and the left tail that such points produce in the
distribution of the per-sample threshold does not shrink with $L$. The
estimate of $\beta_c$ therefore comes from the $\theta$ curves, which
measure the size of the largest component and not only its extent.

The curves of \Cref{fig:theta-curves} sharpen with $L$ while their
crossing heights stay between $0.7$ and $0.9$. For a continuously
vanishing percolation density the crossing heights would decrease with
$L$; stable heights are what a density with a jump would produce, and for
$\beta_c\approx2$ they give $\theta(\beta_c^{+})\approx0.9$. This is the
question raised in \Cref{rem:lrp}(v).

A separate simulation of site percolation on the floored binary tiling
$\mathsf H$ of \Cref{sec:tiling} gives $p_c(\mathsf H)\approx0.70$, while
the Peierls bound of \Cref{prop:peierls} asks for site density
$1-2^{-7}$. With the site density $(1-e^{-\beta/4})^{16}$ of
\Cref{lem:density}, the renormalisation of \Cref{sec:super} therefore
produces a percolating site configuration from $\beta\approx15$ onwards:
optimising the constants in the given proof could lower the $31$ of
\Cref{thm:super} to about $15$, and any bound below that requires a
different argument; compare \Cref{rem:superlit}(ii).

All runs are seeded from a single master seed with an independent stream
per task, and every result file is stored together with a manifest
recording the seed, the code version and the parameter grids. All figures
can be reproduced exactly from the ancillary files.

\printbibliography

@article{Harris1960,
  author  = {Harris, Theodore E.},
  title   = {A lower bound for the critical probability in a certain percolation process},
  journal = {Proceedings of the Cambridge Philosophical Society},
  volume  = {56},
  number  = {1},
  year    = {1960},
  pages   = {13--20},
  doi     = {10.1017/S0305004100034241}
}

@article{KochenStone1964,
  author  = {Kochen, Simon and Stone, Charles},
  title   = {A note on the {B}orel--{C}antelli lemma},
  journal = {Illinois Journal of Mathematics},
  volume  = {8},
  number  = {2},
  year    = {1964},
  pages   = {248--251},
  doi     = {10.1215/ijm/1256059668}
}

@article{Schulman1983,
  author  = {Schulman, Lawrence S.},
  title   = {Long range percolation in one dimension},
  journal = {Journal of Physics A: Mathematical and General},
  volume  = {16},
  number  = {17},
  year    = {1983},
  pages   = {L639--L641},
  doi     = {10.1088/0305-4470/16/17/001}
}

@article{NewmanSchulman1986,
  author  = {Newman, Charles M. and Schulman, Lawrence S.},
  title   = {One dimensional {$1/|j-i|^s$} percolation models: {T}he existence of a transition for {$s\le2$}},
  journal = {Communications in Mathematical Physics},
  volume  = {104},
  year    = {1986},
  pages   = {547--571},
  doi     = {10.1007/BF01211064}
}

@article{AizenmanNewman1986,
  author  = {Aizenman, Michael and Newman, Charles M.},
  title   = {Discontinuity of the percolation density in one dimensional {$1/|x-y|^2$} percolation models},
  journal = {Communications in Mathematical Physics},
  volume  = {107},
  year    = {1986},
  pages   = {611--647},
  doi     = {10.1007/BF01205489}
}

@book{LastPenrose2017,
  author    = {Last, G{\"u}nter and Penrose, Mathew},
  title     = {Lectures on the {P}oisson Process},
  series    = {Institute of Mathematical Statistics Textbooks},
  volume    = {7},
  publisher = {Cambridge University Press},
  address   = {Cambridge},
  year      = {2017},
  doi       = {10.1017/9781316104477}
}

@book{MeesterRoy1996,
  author    = {Meester, Ronald and Roy, Rahul},
  title     = {Continuum Percolation},
  series    = {Cambridge Tracts in Mathematics},
  volume    = {119},
  publisher = {Cambridge University Press},
  address   = {Cambridge},
  year      = {1996},
  doi       = {10.1017/CBO9780511895357}
}

@article{GracarGrauerLuechtrathMoerters2019,
  author  = {Gracar, Peter and Grauer, Arne and L{\"u}chtrath, Lukas and M{\"o}rters, Peter},
  title   = {The age-dependent random connection model},
  journal = {Queueing Systems},
  volume  = {93},
  year    = {2019},
  pages   = {309--331},
  doi     = {10.1007/s11134-019-09625-y}
}

@article{GracarLuechtrathMoerters2021,
  author  = {Gracar, Peter and L{\"u}chtrath, Lukas and M{\"o}rters, Peter},
  title   = {Percolation phase transition in weight-dependent random connection models},
  journal = {Advances in Applied Probability},
  volume  = {53},
  number  = {4},
  year    = {2021},
  pages   = {1090--1114},
  doi     = {10.1017/apr.2021.13}
}

@article{GracarHeydenreichMoenchMoerters2022,
  author  = {Gracar, Peter and Heydenreich, Markus and M{\"o}nch, Christian and M{\"o}rters, Peter},
  title   = {Recurrence versus transience for weight-dependent random connection models},
  journal = {Electronic Journal of Probability},
  volume  = {27},
  year    = {2022},
  pages   = {1--31},
  note    = {Paper no.~60},
  doi     = {10.1214/22-EJP748}
}

@article{Timar2013,
  author  = {Tim{\'a}r, {\'A}d{\'a}m},
  title   = {Boundary-connectivity via graph theory},
  journal = {Proceedings of the American Mathematical Society},
  volume  = {141},
  number  = {2},
  year    = {2013},
  pages   = {475--480},
  doi     = {10.1090/S0002-9939-2012-11333-4}
}

@article{BenjaminiSchramm2001,
  author  = {Benjamini, Itai and Schramm, Oded},
  title   = {Percolation in the hyperbolic plane},
  journal = {Journal of the American Mathematical Society},
  volume  = {14},
  number  = {2},
  year    = {2001},
  pages   = {487--507},
  doi     = {10.1090/S0894-0347-00-00362-3}
}

@article{GracarLuechtrathMoench2022,
  author  = {Gracar, Peter and L{\"u}chtrath, Lukas and M{\"o}nch, Christian},
  title   = {Finiteness of the percolation threshold for inhomogeneous long-range models in one dimension},
  journal = {Electronic Journal of Probability},
  volume  = {30},
  year    = {2025},
  pages   = {1--29},
  note    = {Paper no.~134},
  doi     = {10.1214/25-EJP1399}
}

@article{Yukich2006,
  author  = {Yukich, Joseph E.},
  title   = {Ultra-small scale-free geometric networks},
  journal = {Journal of Applied Probability},
  volume  = {43},
  number  = {3},
  year    = {2006},
  pages   = {665--677},
  doi     = {10.1239/jap/1158784937}
}

@article{Berger2002,
  author  = {Berger, Noam},
  title   = {Transience, recurrence and critical behavior for long-range percolation},
  journal = {Communications in Mathematical Physics},
  volume  = {226},
  number  = {3},
  year    = {2002},
  pages   = {531--558},
  doi     = {10.1007/s002200200617}
}

@article{Biskup2004,
  author  = {Biskup, Marek},
  title   = {On the scaling of the chemical distance in long-range percolation models},
  journal = {The Annals of Probability},
  volume  = {32},
  number  = {4},
  year    = {2004},
  pages   = {2938--2977},
  doi     = {10.1214/009117904000000577}
}

@article{Biskup2011,
  author  = {Biskup, Marek},
  title   = {Graph diameter in long-range percolation},
  journal = {Random Structures \& Algorithms},
  volume  = {39},
  number  = {2},
  year    = {2011},
  pages   = {210--227},
  doi     = {10.1002/rsa.20349}
}

@article{BiskupLin2019,
  author  = {Biskup, Marek and Lin, Jeffrey},
  title   = {Sharp asymptotic for the chemical distance in long-range percolation},
  journal = {Random Structures \& Algorithms},
  volume  = {55},
  number  = {3},
  year    = {2019},
  pages   = {560--583},
  doi     = {10.1002/rsa.20849}
}

@article{ImbrieNewman1988,
  author  = {Imbrie, John Z. and Newman, Charles M.},
  title   = {An intermediate phase with slow decay of correlations in one dimensional {$1/|x-y|^2$} percolation, {I}sing and {P}otts models},
  journal = {Communications in Mathematical Physics},
  volume  = {118},
  number  = {2},
  year    = {1988},
  pages   = {303--336},
  doi     = {10.1007/BF01218582}
}

@article{AizenmanChayesChayesNewman1988,
  author  = {Aizenman, Michael and Chayes, Jennifer T. and Chayes, Lincoln and Newman, Charles M.},
  title   = {Discontinuity of the magnetization in one-dimensional {$1/|x-y|^2$} {I}sing and {P}otts models},
  journal = {Journal of Statistical Physics},
  volume  = {50},
  number  = {1--2},
  year    = {1988},
  pages   = {1--40},
  doi     = {10.1007/BF01022985}
}

@article{DuminilCopinGarbanTassion2024,
  author  = {Duminil-Copin, Hugo and Garban, Christophe and Tassion, Vincent},
  title   = {Long-range models in {1D} revisited},
  journal = {Annales de l'Institut Henri Poincar{\'e}, Probabilit{\'e}s et Statistiques},
  volume  = {60},
  number  = {1},
  year    = {2024},
  pages   = {232--241},
  doi     = {10.1214/22-AIHP1355}
}

@misc{DingSly2013,
  author      = {Ding, Jian and Sly, Allan},
  title       = {Distances in critical long range percolation},
  year        = {2013},
  eprint      = {1303.3995},
  eprinttype  = {arxiv},
  eprintclass = {math.PR}
}

@article{Baeumler2023,
  author  = {B{\"a}umler, Johannes},
  title   = {Distances in {$1/|x-y|^{2d}$} percolation models for all dimensions},
  journal = {Communications in Mathematical Physics},
  volume  = {404},
  number  = {3},
  year    = {2023},
  pages   = {1495--1570},
  doi     = {10.1007/s00220-023-04861-z}
}

@article{BaeumlerBerger2024,
  author  = {B{\"a}umler, Johannes and Berger, Noam},
  title   = {Isoperimetric lower bounds for critical exponents for long-range percolation},
  journal = {Annales de l'Institut Henri Poincar{\'e}, Probabilit{\'e}s et Statistiques},
  volume  = {60},
  number  = {1},
  year    = {2024},
  pages   = {721--730},
  doi     = {10.1214/22-AIHP1342}
}

@article{DeijfenHofstadHooghiemstra2013,
  author  = {Deijfen, Maria and van der Hofstad, Remco and Hooghiemstra, Gerard},
  title   = {Scale-free percolation},
  journal = {Annales de l'Institut Henri Poincar{\'e}, Probabilit{\'e}s et Statistiques},
  volume  = {49},
  number  = {3},
  year    = {2013},
  pages   = {817--838},
  doi     = {10.1214/12-AIHP480}
}

@article{Jorritsma2023a,
  author  = {Jorritsma, Joost and Komj{\'a}thy, J{\'u}lia and Mitsche, Dieter},
  title   = {Cluster-size decay in supercritical kernel-based spatial random graphs},
  journal = {The Annals of Probability},
  volume  = {53},
  number  = {4},
  year    = {2025},
  pages   = {1537--1597},
  doi     = {10.1214/24-AOP1742}
}

@article{Bringmann2019,
  author  = {Bringmann, Karl and Keusch, Ralph and Lengler, Johannes},
  title   = {Geometric inhomogeneous random graphs},
  journal = {Theoretical Computer Science},
  volume  = {760},
  year    = {2019},
  pages   = {35--54},
  doi     = {10.1016/j.tcs.2018.08.014}
}

@article{NewmanZiff2001,
  author  = {Newman, Mark E. J. and Ziff, Robert M.},
  title   = {Fast {M}onte {C}arlo algorithm for site or bond percolation},
  journal = {Physical Review E},
  volume  = {64},
  year    = {2001},
  pages   = {016706},
  doi     = {10.1103/PhysRevE.64.016706}
}

\end{document}